\newcommand{\R}{\mathbb R}
\newcommand{\N}{\mathbb N}
\newcommand{\Qv}{\mathbb Q}
\newcommand{\E}{\mathbb E}
 \renewcommand{\headrulewidth}{0pt}
 \renewcommand{\footrulewidth}{0.5pt}
 \definecolor{myaqua}{rgb}{0.0,0.5,0.55}
 \definecolor{lightaqua}{rgb}{0.75,0.95,0.95}
\newtheorem{theorem}{Theorem}
\newtheorem{prop}{Proposition}
\newtheorem{lem}{Lemma}
\newtheorem{coro}{Corollary}
\newtheorem{defn}{Definition}[section]
\newtheorem{exple}{Example}[section]
\newtheorem{hyp}{Hypothesis}[section]
\newtheorem{rem}{Remark}[section]
\def\lin#1#2{\textcolor[rgb]{0.6,0.6,0.6}{\vspace*{#1mm} \hrule
   height 3 pt \vspace*{#2mm}}}
\def\bt{\begin{tabular}}
\def\et{\end{tabular}}
\def\and{\mbox{ and }}
\def\E{\mbox{\bf E}}
\def\P{\mbox{\bf P}}
\def\1{{\bf 1}}
 \def\boxx#1#2#3#4#5{
 {\linethickness{#4pt}\put(#1,#5){\color{myaqua}{\line(1,0){#3}}}}
 \multiput(#1,#2)(0,#4){2}{\line(1,0){#3}}
 \multiput(#1,#2)(#3,0){2}{\line(0,1){#4}}
  }
\begin{document}

 $\mbox{ }$

 \vskip 12mm

{ 

{\noindent{\Large\bf\color{myaqua}
  Stochastic differential equations driven by relative martingales} 
%
\\[6mm]
{\bf  Fulgence EYI OBIANG$^{1,a}$, Paule Joyce MBENANGOYE$^{1,b}$, Ibrahima FAYE$^{2,c}$ and Octave MOUTSINGA$^{1,d}$}}
\\[2mm]
{ 
$^1$URMI Laboratory, Département de Mathématiques et Informatique, Faculté des Sciences, Université des Sciences et Techniques de Masuku, BP: 943 Franceville, Gabon. 
\\
$^2$Université Alioune Diop de Bambey, Sénégal.\\
$^a$ Email: \href{mailto:feyiobiang@yahoo.fr}{\color{blue}{\underline{\smash{feyiobiang@yahoo.fr}}}}\\[1mm]
$^b$ Email: \href{mailto:paulejoycembenangoye@yahoo.fr}{\color{blue}{\underline{\smash{paulejoycembenangoye@yahoo.fr}}}}\\[1mm]
$^c$ Email: \href{mailto:Ibou.faye@uadb.edu.sn}{\color{blue}{\underline{\smash{Ibou.faye@uadb.edu.sn}}}}\\[1mm]
$^d$ Email: \href{mailto:octavemoutsing-pro@yahoo.fr}{\color{blue}{\underline{\smash{octavemoutsing-pro@yahoo.fr}}}}\\[1mm]
\lin{5}{7}

 {  
 {\noindent{\large\bf\color{myaqua} Abstract}{\bf \\[3mm]
 \textup{
 This paper contributes to the study of relative martingales. Specifically, for a closed random set $H$, they are processes null on $H$ which decompose as $M=m+v$, where $m$ is a càdlàg uniformly integrable martingale and, $v$ is a continuous process with integrable variations such that $v_{0}=0$ and $dv$ is carried by $H$. First, we extend this notion  to stochastic processes not necessarily null on $H$, where $m$ is considered local martingale instead of a uniformly integrable martingale. Thus, we provide a general framework for the new larger class of relative martingales by presenting some structural properties. Second, as applications, we construct solutions for skew Brownian motion equations using continuous stochastic processes of the above mentioned new class. In addition, we investigate stochastic differential equations driven by a relative martingale.
 }}
 \\[4mm]
 {\noindent{\large\bf\color{myaqua} Keywords:}{\bf \\[3mm]
 Relative martingales; Skew Brownian motion; class $(\Sigma)$; Stochastic differential equations; Signed measure theory.
}}\\[4mm]{\noindent{\large\bf\color{myaqua} MSC:}{\color{blue} 60G07; 60G20; 60G46; 60G48}}
\lin{3}{1}

\renewcommand{\headrulewidth}{0.5pt}
\renewcommand{\footrulewidth}{0pt}

 \pagestyle{fancy}
 \fancyfoot{}
 \fancyhead{} 
 \fancyhf{}
 \fancyhead[RO]{\leavevmode \put(-160,0){\color{myaqua} Fulgence EYI OBIANG et al. (2022)} \boxx{15}{-10}{10}{50}{15} }
 \fancyfoot[C]{\leavevmode
 \put(-2.5,-3){\color{myaqua}\thepage}}

 \renewcommand{\headrule}{\hbox to\headwidth{\color{myaqua}\leaders\hrule height \headrulewidth\hfill}}
\section*{Introduction}
In the theory of zeros of continuous martingales \cite{1}, Azéma and Yor have introduced two remarkable classes of processes respectively named: $\mathcal{R}(\mathcal{H})$ and  $\mathcal{R}$. More precisely, they are processes satisfying the next both definitions:
\begin{defn}[\textbf{Definition 2.1 of Azéma and Yor\cite{1}}]\label{d01}
Let  $\mathcal{H}$ be a random  optional closed set. We call $\mathcal{R}(\mathcal{H})$ the class of processes $(X_{t};t\geq0)$ vanishing on $\mathcal{H}$ and admitting a decomposition of the form
$$X_{t}=M_{t}+A_{t},$$
where $(M_{t};t\geq0)$ is a right continuous uniformly integrable martingale, $(A_{t};t\geq0)$ is a continuous and adapted variation integrable process such that $dA$ is carried by $\mathcal{H}$.
\end{defn} 
 
\begin{defn}[\textbf{Definition 2.2 of Azéma and Yor\cite{1}}]\label{d02}
 We call $\mathcal{R}$ the class of processes $(X_{t};t\geq0)$ admitting a decomposition of the form
$$X_{t}=M_{t}+A_{t},$$
where $(M_{t};t\geq0)$ is a right continuous uniformly integrable martingale, $(A_{t};t\geq0)$ is a continuous and adapted variation integrable process such that $dA$ is carried by $H=\{t\geq0:X_{t}=0\}$.
\end{defn} 
Meyer called processes of the  class $\mathcal{R}(\mathcal{H})$, relative martingales because they are true martingales outside of the random set $\mathcal{H}$. Remark from Definition \ref{d01} that all relative martingales vanish on $\mathcal{H}$. This allows to see that $\mathcal{R}(\mathcal{H})\subset\mathcal{R}$. These both classes have been extensively studied in \cite{1}.

On another hand, Yor has extended the notion of class $\mathcal{R}$ to semi-martingales by introducing in \cite{y1}, an another remarkable larger class $(\Sigma)$ of processes. Specifically, they are processes $X$ which decompose as $X=M+A$, where $M$ is a càdlàg local martingale with $M_{0}=0$ and $A$ is a finite variation process such that the signed measure $dA$ is carried by $\{t\geq0:X_{t}=0\}$. Such stochastic processes are strongly related to many studies in probability theory. For instance, they play a capital role in the theory of Azéma-Yor martingales, the study of zeros of continuous martingales \cite{1}, the study of Brownian local times, the balayage formulas for the progressive case \cite{mey}. They are used to resolve Skorokhod's reflection equation and embedding problem . This class has been studied extensively in several studies, enriching the general framework by deriving characterization results, by studying their main properties, presenting their applications, and relaxing more and more the original hypotheses. Note that the class $(\Sigma)$ contains the other two above mentioned classes. However, remark that $\mathcal{R}(\mathcal{H})$ is include in the class $\mathcal{R}$ and the class $(\Sigma)$ only because the fact that all elements of the class $\mathcal{R}(\mathcal{H})$ vanish on $\mathcal{H}$. Thus, if we remove this cancellation condition on $\mathcal{H}$ we lose the inclusion of $\mathcal{R}(\mathcal{H})$ in $(\Sigma)$. 

The aim of this paper is to extend the notion of class $\mathcal{R}(\mathcal{H})$ to càdlàg processes not necessarily null on $\mathcal{H}$ and whose the martingale part is not necessarily uniformly integrable. We do this by considering a new class that we term $\mathcal{M}(\mathcal{H})$ and define as follows:
\begin{defn}\label{d1}
 We shall say that a process $M$ is a relative martingale $(M\in\mathcal{M}(H))$ if it decomposes as $M=m+v$, where
\begin{enumerate}
	\item $m$ is a càdlàg  local martingale, with $m_{0}=0$;
	\item $v$ is an adapted continuous process with finite variations such that $v_{0}=0$; 
	\item  $\int{1_{\mathcal{H}^{c}}(s)dv_{s}}=0$.
\end{enumerate}
\end{defn}

Admittedly, this new class is not a subset of the class $(\Sigma)$ and reciprocally. But, it also contains interesting examples playing a key role in the stochastic analysis. For instance, if $\mathcal{H}$ is the set of zeros of a standard Brownian motion $D$, hence for an other Brownian motion $B$ independent of $D$,  the geometric Itô-Mckean skew Brownian motion with Azzalini skew normal distribution 
$$X^{\delta}=\sqrt{1-\delta^{2}}B+\delta|D|$$
is a process of the class $\mathcal{M}(\mathcal{H})$. This process is used by several authors. For instance, Corns and Satchell () and Zhu and He [24] worked on this type of skew Brownian motion and priced European style options. Recently, in the preprint (), the authors consider an asset evolving as $X^{\delta}$ to formulate the wealth function under continuous time investment strategy of insurance companies. In this last mentioned reference, the authors investigate the next stochastic differential equation:
$$dS_{t}=\left(\mu+\frac{\sigma^{2}}{2}\right)S_{t}dt+\sigma S_{t}dX^{\delta}_{t}.$$
Thus, it would be useful to provide a general framework and develop techniques to manipulate the processes of this new class of relative martingales. This could open new perspectives in applications and in other areas of probability theory.

The  remainder of this paper is organized as follows. In Section \ref{sec:1}, we present some useful preliminaries. Section \ref{sec:2} is devoted to the study of the class $\mathcal{M}(\mathcal{H})$, where we give some examples and explore some general properties. Section \ref{sec:3} focuses on the construction of solutions for skew Brownian motion equations using stochastic processes of the class $\mathcal{M}(\mathcal{H})$.  Finally, in Section \ref{sec:4}, we investigate stochastic differential equations driven by a relative martingale.

\section{Notations and recall of some useful results}\label{sec:1}

In this section, we provide notations and recall some useful results that will be used throughout this work. Thus, we start by giving some notations.

\subsection{Notations}

Throughout we fix a filtered probability space $\left(\Omega,(\mathcal{F}_{t})_{t\geq0},\mathcal{F},\P\right)$ satisfying the usual conditions. We shall always consider that $H$ is the zero set of a continuous martingale $D$. And we shall use the following notations:
\begin{itemize}
	\item $\forall t\geq0$, $\gamma_{t}=\sup\{s\leq t:D_{s}=0\}$;
	\item $\gamma=\sup\{t\geq0:D_{s}=0\}$;
	\item For any other process $X$, we will denote $g_{t}=\sup\{s\leq t:X_{s}=0\}$ and $g=\sup\{t\geq0:X_{s}=0\}$;
	\item $\Qv=\frac{|D_{\infty}|}{\E(|D_{\infty}|)}\P$.
\end{itemize}
We consider in this paper that 
$$\P(0<\gamma<\infty)=1.$$
Remark that the random time $\gamma$ is not a stopping time with respect to $(\mathcal{F}_{t})_{t\geq0}$ but an honest time. Hence, we shall denote $(\mathcal{G}_{t})_{t\geq0}$ the smallest right continuous filtration containing $(\mathcal{F}_{t})_{t\geq0}$ for which $\gamma$ is a stopping time.

On another hand, it is known that for any continuous semi-martingale $Y$, the set $\mathcal{W}=\{t\geq0; Y_{t}=0\}$ cannot be ordered. However, the set $\R_{+}\setminus\mathcal{W}$ can be decomposed as a countable union $\cup_{n\in\N}{J_{n}}$ of intervals $J_{n}$. Each interval $J_{n}$ corresponds to some excursion of $Y$. In other words, if $J_{n}=]g_{n},d_{n}[$, $Y_{t}\neq0$ for all $t\in]g_{n},d_{n}[$ and $Y_{g_{n}}=Y_{d_{n}}=0$. For any constant $\alpha\in[0,1]$, we consider a sequence $(\xi_n)$ of i.i.d. Bernoulli variables such that
$$\P(\zeta_{n}=1)=\alpha\text{ and }P(\zeta_{n}=-1)=1-\alpha.$$
Now, let us define the process $Z^{Y}$ as follows.
\begin{equation}\label{zalpha}
	Z^{Y}_{t}=\sum_{n=0}^{+\infty}{\zeta_{n}1_{]g_{n},d_{n}[}(t)}.
\end{equation}

If we assume that $\alpha$ is a piecewise constant function associated with a partition $(0=t_{0}<t_{1}<\cdots<t_{n-1}<t_{m})$, i.e., $\alpha$ is of the form
$$\alpha(t)=\sum_{i=0}^{m}{\alpha_{i}1_{[t_{i},t_{i+1})}(t)},$$
where $\alpha_{i}\in[0,1]$ for all $i=0,1,\cdots,m$, then we shall consider the process
\begin{equation}\label{Zalpha}
	\mathcal{Z}^{Y}_{t}=\sum_{n=0}^{+\infty}{\sum_{i=0}^{m}{\zeta^{i}_{n}1_{]g_{n},d_{n}[\cap[t-{i},t_{i+1})}(t)}},
\end{equation}
where $(\zeta^{i}_{n})_{n\geq0}$, $i=1,2,\cdots,m$, are $m$ independent sequences of independent variables such that
$$\P(\zeta^{i}_{n}=1)=\alpha_{i}\text{ and }\P(\zeta^{i}_{n}=-1)=1-\alpha_{i}.$$
\subsection{Some useful results of enlargement filtrations}

Now, we shall recall some results of the theory of enlargement filtrations which are  useful in the current work.
\begin{prop}[\textbf{Proposition 3.1 of Azéma and Yor\cite{1}}]\label{rho}

Let $H$ be a random  optional closed set. Denote $g=\sup{H}$ and represent by $(\mathcal{G}_{t})_{t\geq0}$, the progressive enlargement of the filtration $(\mathcal{F}_{t})_{t\geq0}$ with respect to $g$. Let $(V_{t})_{t\geq0}$ be a $(\mathcal{G}_{g+t})_{t\geq0}-$   optional process. There exists a unique $(\mathcal{F}_{t})_{t\geq0}-$   optional process $(U_{t})_{t\geq0}$  which vanishes on $H$ such that $\forall t\geq0$, $U_{g+t}=V_{t}$ and $U_{0}=V_{0}$ on $\{g=0\}$. That defines a function
$\rho:V\longmapsto U$. $\rho$ is linear, non-negative and preserves products.
\end{prop}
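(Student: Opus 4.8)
The plan is to establish the existence and uniqueness of $U=\rho(V)$ by a functional monotone class argument for existence and the optional section theorem for uniqueness, and then to read off linearity, non-negativity and the product property from their validity on a generating family.

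I would begin with uniqueness, which is the conceptually clean part. Suppose $U$ and $U'$ are two $(\mathcal F_t)$-optional processes, both vanishing on $H$, with $U_{g+t}=U'_{g+t}=V_t$ for every $t\ge 0$ and $U_0=U'_0=V_0$ on $\{g=0\}$. Their difference $N=U-U'$ is $(\mathcal F_t)$-optional, vanishes on $H$, and satisfies $N_{g+t}=0$, so $N$ is carried by the pre-$g$ set $[0,g)\setminus H$. If $N$ were not indistinguishable from $0$, the optional section theorem would furnish an $(\mathcal F_t)$-stopping time $T$ with $\P(T<\infty)>0$ whose graph lies in $\{N\neq 0\}\subseteq[0,g)$; I would then contradict the defining property of the progressive enlargement, namely that on $\{T<g\}$ the value at $T$ is already pinned down by the post-$g$ data transported through $g$, forcing $N\equiv 0$. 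Making this last implication precise --- that an $\mathcal F$-optional process compatible with a $(\mathcal G_{g+\cdot})$-optional $V$ cannot be supported strictly before $g$ --- is the delicate point of the uniqueness half.

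For existence I would use the functional monotone class theorem. The $(\mathcal G_{g+t})$-optional $\sigma$-field is generated by bounded c\`adl\`ag $(\mathcal G_{g+t})$-adapted processes, equivalently by elementary processes $V_t=Z\,\mathbf 1_{[S,\infty)}(t)$ with $S$ a $(\mathcal G_{g+\cdot})$-stopping time and $Z$ bounded $\mathcal G_{g+S}$-measurable. For such generators I would exploit the known decomposition of $\mathcal G_{g+s}$-measurable variables into $\mathcal F$-measurable pieces attached to the last excursion, transport $S$ into an $(\mathcal F_t)$-stopping time living on $[g,\infty)$, and define $U$ to equal the transported value on that excursion and $0$ elsewhere; this $U$ is supported on $[g,\infty)$, hence vanishes on $H$, and one verifies it is $(\mathcal F_t)$-optional. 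The class of $V$ for which such a $U$ exists is a vector space, stable under bounded monotone limits and containing the generators, so the monotone class theorem gives all bounded optional $V$, and a truncation argument removes the boundedness restriction.

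The properties of $\rho$ then follow by transport of structure. Linearity is immediate from the construction on generators and its stability under the limiting procedure, and uniqueness makes $\rho(V)$ well defined; non-negativity holds because $U$ and $V$ share values along $t\mapsto g+t$, so $V\ge 0$ forces $U\ge 0$. For the product property I would first check $\rho(VV')=\rho(V)\rho(V')$ on indicator-type generators, where both sides reduce to the same transported stochastic interval, then extend by the monotone class theorem. The main obstacle here is that multiplication does not commute with monotone limits in general; I would therefore restrict to bounded $V,V'$ and invoke the uniqueness already proved to identify $\rho(V)\rho(V')$ as \emph{the} unique $(\mathcal F_t)$-optional process vanishing on $H$ whose values along $g+t$ equal $V_tV'_t$, namely $\rho(VV')$.
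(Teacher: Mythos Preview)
The paper does not prove this proposition: it is stated in Section~\ref{sec:1} as a recalled result from Az\'ema and Yor~\cite{1} (their Proposition~3.1), with no accompanying argument, so there is no ``paper's own proof'' against which to compare your attempt.

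That said, a brief comment on your sketch. The existence half via monotone classes and the derivation of linearity, positivity and multiplicativity from uniqueness are reasonable. The uniqueness half, however, is where the real content lies and you explicitly leave it open: you reduce to showing that an $(\mathcal F_t)$-optional process $N$ vanishing on $H\cup[g,\infty)$ must be identically zero, and you say only that this ``is the delicate point.'' It is genuinely delicate: nothing in the section theorem alone forbids an $(\mathcal F_t)$-optional process supported on the pre-$g$ excursion intervals. One clean way to close the gap is to observe that for each fixed $t$, $N_t$ is $\mathcal F_t$-measurable while $N_t\mathbf 1_{\{g\le t\}}=0$; taking $\mathcal F_t$-conditional expectations yields $N_t\,\mathbf P(g\le t\mid\mathcal F_t)=0$, and under the standing assumption $\mathbf P(0<g<\infty)=1$ together with the structure of the Az\'ema supermartingale of the honest time $g$ one has $\mathbf P(g\le t\mid\mathcal F_t)>0$ on $\{D_t\neq 0\}$, forcing $N_t=0$ there as well. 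You should make this (or an equivalent argument using honesty of $g$) explicit rather than gesturing at ``the defining property of the progressive enlargement.''
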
       

\begin{theorem}\label{qot}[\textbf{Quotient theorem: Theorem 3.2 of Azéma and Yor \cite{1}}]
\begin{enumerate}
	\item If $(X_{t};t\geq0)$ is a stochastic process of the class $\mathcal{R}(H)$, hence, the process $(\chi_{t};t>0)$ defined by $\chi_{t}=\frac{X_{g+t}}{Y_{g+t}}$
is a $\left(Q, (\mathcal{G}_{g+t}){t>0}\right)$ uniformly integrable martingale.
\item Reciprocally, let $(\chi_{t};t>0)$ a $\left(Q, (\mathcal{G}_{g+t}){t>0}\right)$ uniformly integrable martingale; the stochastic process $X=(Y_{t}\rho(\chi_{\cdot})_{t};t\geq0)$ is the unique process of $\mathcal{R}(H)$ such that $\chi_{t}=\frac{X_{g+t}}{Y_{g+t}}$ for all $t>0$.
\end{enumerate}
\end{theorem}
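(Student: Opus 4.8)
The plan is to read the two assertions as the two halves of a single bijection. Writing $g=\sup H$ (so that $g=\gamma$, since $H$ is the zero set of $D$) and taking $Y$ to be the fixed, strictly-positive-after-$g$ reference relative martingale of $\mathcal{R}(H)$ whose terminal value generates the density $dQ/d\P$, the maps $X\longmapsto\chi_\cdot=X_{g+\cdot}/Y_{g+\cdot}$ and $\chi\longmapsto Y\rho(\chi_\cdot)$ are mutually inverse, and the theorem amounts to showing that the first sends $\mathcal{R}(H)$ onto the class of $(Q,(\mathcal{G}_{g+t})_{t>0})$ uniformly integrable martingales. Proposition \ref{rho} is the backbone throughout: it provides $\rho$ as a linear, multiplicative, nonnegative operator and, crucially, guarantees that $\rho(\chi_\cdot)$ is the unique $(\mathcal{F}_t)$-optional process vanishing on $H$ with $\rho(\chi_\cdot)_{g+t}=\chi_t$.

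For the direct implication I would first localize after the last zero. Writing $X=M+A$ with $M$ uniformly integrable and $dA$ carried by $H\subseteq[0,g]$, the process $A$ is frozen on $(g,\infty)$ while $X_g=0$ (since $X$ vanishes on $H$ and $g\in H$), so $A_g=-M_g$ and hence $X_{g+t}=M_{g+t}-M_g$ for all $t>0$; after $g$ the numerator is nothing but the post-$g$ increment of the martingale $M$. The whole content is then that dividing this increment by $Y_{g+t}$ while passing simultaneously to $Q$ and to $(\mathcal{G}_{g+t})$ produces a true martingale. The natural device is the Bayes rule
\[
\E_{Q}[\,\cdot\mid\mathcal{G}_{g+s}]=\frac{\E\!\left[\,\cdot\,\tfrac{dQ}{d\P}\,\Big|\,\mathcal{G}_{g+s}\right]}{\E\!\left[\tfrac{dQ}{d\P}\,\Big|\,\mathcal{G}_{g+s}\right]},
\]
together with the projection formulas of progressive enlargement past an honest time, which rewrite $(\mathcal{G}_{g+t})$-conditional expectations after $g$ in terms of $(\mathcal{F}_t)$-conditional expectations weighted by the relevant Az\'ema supermartingale. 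The density $dQ/d\P$ and the denominator $Y$ are calibrated so that this enlargement weight and the Radon--Nikodym density together cancel the factor $Y_{g+t}$, reducing the identity $\E_{Q}[\chi_t\mid\mathcal{G}_{g+s}]=\chi_s$ to the ordinary $\P$-martingale property of $M$, with uniform integrability under $Q$ coming from the same normalization. This exact cancellation is the heart of the proof and the step I expect to be the main obstacle, since it requires the honest-time enlargement kernel and the change-of-measure density to match precisely.

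For the converse I would run Proposition \ref{rho} backwards. Given a $(Q,(\mathcal{G}_{g+t})_{t>0})$ uniformly integrable martingale $\chi$, set $X:=Y\rho(\chi_\cdot)$; then $X$ vanishes on $H$ (both factors do) and $X_{g+t}/Y_{g+t}=\chi_t$ for $t>0$ by the defining property of $\rho$, so the quotient relation holds. It remains to produce the decomposition $X=M+A$ with $M$ a uniformly integrable $(\mathcal{F}_t)$-martingale and $A$ continuous with $dA$ carried by $H$, which is obtained by reversing the computation of the direct part: the $Q$-$\mathcal{G}$-martingale property of $\chi$ is transferred back to the $\P$-$\mathcal{F}$-martingale property of the candidate $M$, and $A$ is read off as the residual finite-variation term. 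That $dA$ is carried by $H$ then holds by construction of $\rho$, which reproduces $X$ off $H$ from the martingale and forces it to vanish on $H$, thereby concentrating all the variation on the zero set. Finally, uniqueness is immediate: two elements of $\mathcal{R}(H)$ with the same quotient agree after $g$ and vanish on $H$, hence coincide by the uniqueness clause of Proposition \ref{rho}.
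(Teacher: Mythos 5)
First, a point of context: the paper itself never proves Theorem \ref{qot}; it is quoted as background from Az\'ema and Yor \cite{1}, so your attempt can only be measured against the original argument. Your preliminary reductions are sound: since $dA$ is carried by $H\subset[0,g]$ and $X_{g}=0$, one indeed has $X_{g+t}=M_{g+t}-M_{g}$, and your uniqueness argument at the end is legitimate, since two right-continuous adapted processes vanishing on $H$ and agreeing after $g$ coincide by the uniqueness clause of Proposition \ref{rho}. The problem is that both analytic cores of the theorem are asserted rather than proved. In the direct part you delegate everything to an ``exact cancellation'' between the Bayes rule for $Q=\frac{|D_{\infty}|}{\E(|D_{\infty}|)}\P$ and the honest-time projection formulas, and you yourself flag this as the main obstacle; but that cancellation \emph{is} the theorem. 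In particular it cannot ``reduce to the ordinary $\P$-martingale property of $M$'': after the last zero $g$, the increment $M_{g+t}-M_{g}$ is in general \emph{not} a $(\P,(\mathcal{G}_{g+t}))$-martingale, because enlarging past an honest time creates a drift (for Brownian motion the post-$g$ path is a meander, which drifts away from $0$). Only the combination of the reweighting by $|D_{\infty}|$, the division by $Y_{g+t}=|D_{g+t}|$, and this enlargement drift yields a martingale; this is an Imhof/$h$-transform type identity, and verifying it is the actual content. Az\'ema--Yor do not argue by cancellation of kernels: they go through their representation theorem (Proposition 2.2 of \cite{1}, which the present paper itself invokes in its Propositions \ref{p1} and \ref{p2}), namely that $X\in\mathcal{R}(H)$ if and only if $X_{t}=\E[X_{\infty}1_{\{g<t\}}\mid\mathcal{F}_{t}]$ for an integrable terminal variable, and the $(Q,(\mathcal{G}_{g+t}))$-martingale property of the quotient is checked directly from this closed formula.

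The converse half has the same defect. Setting $X=Y\rho(\chi_{\cdot})$ does give an optional process vanishing on $H$ whose quotient is $\chi$, but saying that the decomposition $X=M+A$ (with $M$ a uniformly integrable $(\mathcal{F}_{t})$-martingale and $A$ continuous, of integrable variation, with $dA$ carried by $H$) is ``obtained by reversing the computation'' and that ``$A$ is read off as the residual finite-variation term'' is not an argument: nothing in your construction shows that such a residual term exists, is continuous, has integrable variation, or charges only $H$ --- and membership in $\mathcal{R}(H)$ is precisely what has to be established. The workable route is again the representation theorem: with $\chi_{\infty}$ the terminal value of the uniformly integrable $Q$-martingale $\chi$, the process $X_{t}=c\,\E\bigl[\chi_{\infty}|D_{\infty}|1_{\{g<t\}}\mid\mathcal{F}_{t}\bigr]$ (suitably normalized) lies in $\mathcal{R}(H)$ \emph{automatically} by Proposition 2.2 of \cite{1}, and one then checks that its quotient is $\chi$; no residual term ever needs to be identified. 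As written, both halves of your proposal stop exactly where the proof has to begin.
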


\begin{theorem}\label{gir}[\textbf{Theorem 4.1.2 of Azéma and Yor \cite{1}}]
Let $X=m+v$ be a $(\P,(\mathcal{F}_{t})_{t\geq0})$- semi-martingale, where $m$ is a local martingale and $v$ is a process with finite variations such that $dv$ is carried by $H$. The process $\widetilde{X}$ defined by
$$\widetilde{X}_{t}=X_{\gamma+t}-X_{\gamma}-\int_{\gamma}^{\gamma+t}{\frac{d\langle X,|D|\rangle_{s}}{|D_{s}|}}$$
is then a $(\Qv,(\mathcal{G}_{\gamma+t})_{t\geq0})$- local martingale and $\langle \widetilde{X},\widetilde{X}\rangle_{t}=\langle X,X\rangle_{\gamma+t}-\langle X,X\rangle_{\gamma}$.
\end{theorem}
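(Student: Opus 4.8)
The plan is to combine a change-of-measure (Girsanov) step with a progressive-enlargement step and to observe that their two drift corrections coalesce into the single drift $\int d\langle X,|D|\rangle_s/|D_s|$ of the statement; throughout I use the standing assumptions that $D$ converges to an integrable $D_\infty$ and that $0<\gamma<\infty$ a.s. First I would reduce to the local-martingale part $m$: since $dv$ is carried by $H=\{t\ge 0:D_t=0\}$ and, by definition of $\gamma$, the set $H$ is not visited on $]\gamma,\infty[$, the continuous finite-variation process $v$ is constant on $[\gamma,\infty[$. Hence $X_{\gamma+t}-X_\gamma=m_{\gamma+t}-m_\gamma$ and, as $v$ contributes nothing to covariations, $\langle X,|D|\rangle=\langle m,|D|\rangle$ and $\langle X,X\rangle=\langle m,m\rangle$; it therefore suffices to prove the assertion with $m$ in place of $X$.

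Next I would produce the two drift corrections separately. Put $U_t=\E(|D_\infty|\mid\mathcal F_t)$, so that the $\P$-density process of $\Qv$ is $U_t/\E(|D_\infty|)$; by Girsanov's theorem the passage from $(\P,\mathcal F)$ to $(\Qv,\mathcal F)$ turns $m$ into a semimartingale whose $\Qv$-local-martingale part is $m-\int d\langle m,U\rangle_s/U_s$. Independently, enlarging progressively after the honest time $\gamma$ yields, on $]\gamma,\infty[$, the Jeulin--Yor decomposition, whose $\mathcal G$-local-martingale part is $m-\int_\gamma^{\cdot} d\langle m,1-Z^\gamma\rangle_s/(1-Z^\gamma_s)$, where $Z^\gamma_t=\P(\gamma>t\mid\mathcal F_t)$ is the Az\'ema supermartingale of $\gamma$. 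The key lemma is the explicit form $1-Z^\gamma_t=|D_t|/U_t$, i.e. $Z^\gamma_t=1-|D_t|/\E(|D_\infty|\mid\mathcal F_t)$; this is the last-exit computation for the continuous martingale $D$, obtained by an optional-stopping argument identifying $|D_t|/U_t$ with the $\mathcal F_t$-conditional probability that $D$ never returns to $0$ after time $t$.

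Granting this identity, the multiplicative factorisation $|D_t|=U_t\,(1-Z^\gamma_t)$ gives, for the continuous martingale parts, $d\langle m,|D|\rangle/|D| = d\langle m,U\rangle/U + d\langle m,1-Z^\gamma\rangle/(1-Z^\gamma)$, so the Girsanov and enlargement corrections add up to exactly $\int d\langle m,|D|\rangle_s/|D_s|$; this shows that $\widetilde X$ is a $(\Qv,(\mathcal G_{\gamma+t}))$-local martingale. As a check, on $]\gamma,\infty[$ the sign of $D$ is a fixed $\mathcal G_\gamma$-measurable constant, so $d\langle m,|D|\rangle/|D|=d\langle m,D\rangle/D$ there, the familiar $h$-transform drift for $h(x)=|x|$. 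The quadratic-variation claim is then immediate: $\widetilde X$ differs from $X_{\gamma+\cdot}-X_\gamma$ only by a continuous finite-variation process, and quadratic variation is invariant under the equivalent measure change, whence $\langle\widetilde X,\widetilde X\rangle_t=\langle X,X\rangle_{\gamma+t}-\langle X,X\rangle_\gamma$.

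The hard part will be twofold. The first difficulty is a rigorous proof of the Az\'ema-supermartingale identity $1-Z^\gamma=|D|/U$; the second, more delicate, is justifying that the Girsanov and enlargement drifts may be superposed, since the enlargement formula must be applied with respect to the measure under which one works, whereas $Z^\gamma$ is computed under $\P$. One must therefore either perform the two operations in a fixed order and verify that the intermediate objects (the $\Qv$-Az\'ema supermartingale of $\gamma$, or the $\mathcal G$-density process of $\Qv$) are the expected ones, or argue directly; the clean factorisation $|D|=U(1-Z^\gamma)$, together with the measure-invariance of the covariations $\langle m,\cdot\rangle$, is precisely what makes this superposition legitimate.
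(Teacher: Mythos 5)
The paper itself offers no proof of this statement---it is quoted as Theorem 4.1.2 of Az\'ema--Yor \cite{1}---so your proposal has to be judged on its own merits. Your overall architecture is the right one, and is essentially the classical route: reduce to the local-martingale part $m$ (this step is correct, since $H\,\cap\,]\gamma,\infty[\,=\emptyset$ forces $v$ to be constant on $[\gamma,\infty[$ and $v$ contributes nothing to covariations), then superpose a Girsanov correction for the passage $\P\to\Qv$ with a Jeulin--Yor progressive-enlargement correction at the honest time $\gamma$, the two drifts combining through a multiplicative factorisation of $|D|$.

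The genuine gap is that your key lemma is false as stated, and it is the load-bearing step. Writing $U_t=\E[\,|D_\infty|\mid\mathcal F_t]$, the identity $1-Z^\gamma_t=|D_t|/U_t$ does \emph{not} hold for the $\P$-Az\'ema supermartingale $Z^\gamma_t=\P(\gamma>t\mid\mathcal F_t)$. The optional-stopping argument you invoke proves $D_t=\E[D_\infty 1_{\{\gamma\le t\}}\mid\mathcal F_t]$, hence (since $D$ keeps a constant sign after $\gamma$) $|D_t|=\E[\,|D_\infty| 1_{\{\gamma\le t\}}\mid\mathcal F_t]$; dividing by $U_t$ identifies $|D_t|/U_t$ with $\Qv(\gamma\le t\mid\mathcal F_t)$, i.e.\ the \emph{$\Qv$}-Az\'ema supermartingale, not the $\P$-one: the event $\{\gamma\le t\}$ enters weighted by $|D_\infty|$, and that weighting does not factor out. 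Concretely, for $D_t=B_{t\wedge1}$ a Brownian motion stopped at time $1$, $\P(\gamma\le t\mid\mathcal F_t)=2\Phi\bigl(|B_t|/\sqrt{1-t}\bigr)-1$, which differs from $|B_t|/\E[\,|B_1|\mid\mathcal F_t]$ (at $t=1/2$, $B_t=1/2$ these are $\approx 0.52$ and $\approx 0.71$). Consequently the justification you offer for the superposition---``the clean factorisation $|D|=U(1-Z^\gamma)$''---rests on a false identity, and the measure-consistency problem you yourself flag as ``the hard part'' is left unresolved. The repair is the one you gesture at but do not carry out: work under $\Qv$ throughout. By Girsanov (which needs only $\Qv\ll\P$, equivalence may fail), $n=m-\int d\langle m,U\rangle_s/U_s$ is a $(\Qv,(\mathcal F_t))$-local martingale; the $\Qv$-Az\'ema supermartingale of $\gamma$ is $1-|D|/U$; the Jeulin--Yor formula applied \emph{under $\Qv$} after the honest time $\gamma$ makes $n_{\gamma+t}-n_\gamma-\int_\gamma^{\gamma+t}\bigl(d\langle n,|D|/U\rangle_s\bigr)\big/\bigl(|D_s|/U_s\bigr)$ a $(\Qv,(\mathcal G_{\gamma+t}))$-local martingale, and It\^o's formula for the quotient gives
$$\frac{d\langle m,|D|/U\rangle_s}{|D_s|/U_s}=\frac{d\langle m,|D|\rangle_s}{|D_s|}-\frac{d\langle m,U\rangle_s}{U_s},$$
so the two corrections sum exactly to $\int_\gamma^{\gamma+t}d\langle m,|D|\rangle_s/|D_s|$. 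With that substitution your argument, including the quadratic-variation claim (which is fine, noting only that $\Qv\ll\P$ suffices there), becomes a proof; as written, it is not one.
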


Remark that $\widetilde{X}$ holds the next:
$$\widetilde{X}_{t}=X_{\gamma+t}-X_{\gamma}-\int_{\gamma}^{\gamma+t}{\frac{d\langle X,D\rangle_{s}}{D_{s}}}.$$

\begin{lem}[\textbf{Lemma 5.7 of Jeulin \cite{jeu}}]\label{j80}
Let $g$ be an honest variable with respect to $(\mathcal{F}_{t})_{t\geq0}$. Let $(\mathcal{G}_{t})_{t\geq0}$ be the progressive enlargement of the filtration $(\mathcal{F}_{t})_{t\geq0}$ with respect to $g$. If $\tau$ is a stopping time with respect to $(\mathcal{G}_{t})_{t\geq0}$ such that $g<\tau$ on $\{g<\infty\}$, hence
$$\mathcal{G}_{\tau}=\mathcal{F}_{\tau}.$$
\end{lem}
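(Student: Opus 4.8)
The plan is to establish the two inclusions separately, with $\mathcal{F}_{\tau}\subseteq\mathcal{G}_{\tau}$ being routine and the reverse inclusion carrying all the content. Throughout I would read $\mathcal{F}_{\tau}$ as the $\sigma$-field generated by the variables $\bar{Z}_{\tau}$, where $\bar{Z}$ ranges over the $(\mathcal{F}_{t})$-optional processes, this being the natural description at a random time $\tau$ that need not be an $(\mathcal{F}_{t})$-stopping time (note that, in general, $\tau$ is genuinely only a $(\mathcal{G}_{t})$-stopping time here, e.g.\ $\tau=g+1$). For the easy inclusion I would observe that every $(\mathcal{F}_{t})$-optional process is $(\mathcal{G}_{t})$-optional, so that each generator $\bar{Z}_{\tau}$ of $\mathcal{F}_{\tau}$ is $\mathcal{G}_{\tau}$-measurable because $\tau$ is a $(\mathcal{G}_{t})$-stopping time; hence $\mathcal{F}_{\tau}\subseteq\mathcal{G}_{\tau}$.

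For the reverse inclusion $\mathcal{G}_{\tau}\subseteq\mathcal{F}_{\tau}$ I would argue at the level of optional processes rather than of events, since this is exactly what neutralizes the fact that $\tau$ is not an $(\mathcal{F}_{t})$-stopping time. As $\tau$ is a $(\mathcal{G}_{t})$-stopping time, $\mathcal{G}_{\tau}$ is generated by the variables $Z_{\tau}$ with $Z$ running over the $(\mathcal{G}_{t})$-optional processes. The structural input I would invoke, valid precisely because $g$ is honest, is the splitting characteristic of progressive enlargement along an honest time: every $(\mathcal{G}_{t})$-optional process $Z$ admits two $(\mathcal{F}_{t})$-optional processes $Z^{1}$ and $Z^{2}$ such that $Z=Z^{1}$ on the stochastic interval $[0,g]$ and $Z=Z^{2}$ on $]g,\infty[$. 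This decomposition is the engine of the proof.

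It then remains to evaluate at $\tau$ and feed in the hypothesis. On $\{g<\infty\}$ the assumption $g<\tau$ places $\tau$ strictly inside $]g,\infty[$, so that $Z_{\tau}=Z^{2}_{\tau}$ there; on $\{g=\infty\}$ the interval $]g,\infty[$ is empty and $Z=Z^{1}$ along the whole trajectory, so $Z_{\tau}=Z^{1}_{\tau}$. In both cases $Z_{\tau}$ coincides with $\bar{Z}_{\tau}$ for a single $(\mathcal{F}_{t})$-optional process $\bar{Z}$ (equal to $Z^{2}$ on $\{g<\infty\}$ and to $Z^{1}$ on $\{g=\infty\}$), and such a variable is $\mathcal{F}_{\tau}$-measurable by definition. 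Letting $Z$ range over all $(\mathcal{G}_{t})$-optional processes yields $\mathcal{G}_{\tau}\subseteq\mathcal{F}_{\tau}$, and combined with the first step this gives the announced equality $\mathcal{G}_{\tau}=\mathcal{F}_{\tau}$.

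The main obstacle is the honest-time splitting of optional processes together with the boundary bookkeeping it requires. A more naive route---writing $A\cap\{\tau\leq t\}\in\mathcal{G}_{t}$, using the explicit description of $\mathcal{G}_{t}$-measurable sets, and the relation $\{\tau\leq t\}\subseteq\{g<t\}$ valid on $\{g<\infty\}$---stalls, because $\{g<t\}$ is typically \emph{not} $\mathcal{F}_{t}$-measurable (think of $g$ as the last zero of the martingale $D$), so one cannot upgrade the trace identity at fixed deterministic times into genuine $(\mathcal{F}_{t})$-measurability. Passing to the optional-process formulation is exactly what circumvents this, at the price of setting up the honest-time decomposition and of treating the endpoint $\{t=g\}$ and the event $\{g=\infty\}$ carefully; here the strict inequality $g<\tau$ in the statement is what guarantees that evaluation lands in the open interval $]g,\infty[$ rather than on its boundary.
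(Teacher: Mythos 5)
First, a point of comparison that matters for this exercise: the paper gives no proof of this statement at all. It is recalled verbatim, in the preliminaries, as Lemma 5.7 of Jeulin's lecture notes, so there is no in-paper argument to measure yours against; it must stand on its own. Your overall strategy is the standard one (and, in substance, the one behind Jeulin's result): describe $\sigma$-fields at random times through evaluations of optional processes, obtain $\mathcal{F}_{\tau}\subseteq\mathcal{G}_{\tau}$ from the inclusion of the optional $\sigma$-fields, and for the converse use the splitting of $(\mathcal{G}_{t})$-optional processes along the honest time $g$, evaluated strictly after $g$. That skeleton is correct, as is your diagnosis of why the naive route through $A\cap\{\tau\leq t\}$ stalls.

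There is, however, a genuine gap in the final step. You assemble $\bar{Z}$ as ``$Z^{2}$ on $\{g<\infty\}$ and $Z^{1}$ on $\{g=\infty\}$'' and assert it is a single $(\mathcal{F}_{t})$-optional process; in general it is not, and is not even $(\mathcal{F}_{t})$-adapted. The event $\{g<\infty\}$ lies in $\mathcal{F}_{\infty}$ but typically in no $\mathcal{F}_{t}$: for $g$ the end of an optional set, it is the event that the visits to that set eventually stop, which no finite-time observer can decide, so $\bar{Z}_{t}=Z^{2}_{t}1_{\{g<\infty\}}+Z^{1}_{t}1_{\{g=\infty\}}$ uses information unavailable at time $t$. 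What your computation actually yields is $Z_{\tau}=Z^{2}_{\tau}1_{\{g<\infty\}}+Z^{1}_{\tau}1_{\{g=\infty\}}$ with $Z^{1}_{\tau},Z^{2}_{\tau}\in\mathcal{F}_{\tau}$, so to conclude you still need $\{g<\infty\}\in\mathcal{F}_{\tau}$ --- and this is not addressed anywhere. Worse, it is essentially an instance of the inclusion being proved: under your hypothesis $\{g<\infty\}=\{g<\tau\}$, which is $Z_{\tau}$ for the particular $(\mathcal{G}_{t})$-optional process $Z=1_{]g,\infty[}$, so your argument takes as input, for every $Z$, exactly what it is supposed to establish for this $Z$. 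Two repairs are available. In the setting where the lemma is actually used in the paper, one has $\P(0<\gamma<\infty)=1$, so under the usual conditions $\{g=\infty\}$ is negligible and you may simply take $\bar{Z}=Z^{2}$, getting $Z_{\tau}=Z^{2}_{\tau}$ a.s., which suffices for measurability with respect to a completed $\sigma$-field. For the general statement you must prove separately that $\{g=\infty\}\in\mathcal{F}_{\tau}$, and also fix conventions on $\{\tau=\infty\}$, where the splitting on $]g,\infty[$ gives nothing and one instead uses that $\mathcal{G}_{\infty}=\mathcal{F}_{\infty}$ for honest times. A last, cosmetic remark: the optional (as opposed to predictable) splitting along an honest time is usually stated with a possible extra term on the graph of $g$; this is harmless here precisely because the hypothesis $g<\tau$ keeps the evaluation strictly after $g$, but it deserves a word.
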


\subsection{Recall of some useful balayage formulas}

Balayage formulas are power tools in this work. In next, we recall some balayage results we use in this paper. Thus, we start by the predictable case for continuous semimartingales.
\begin{prop}
Let $X$ be a continuous semimartingale and define $g_{t}=\sup\{s\leq t:X_{s}=0\}$. If $k$ is a locally bounded predictable process, then
$$k_{g_{t}}X_{t}=k_{g_{0}}X_{0}+\int_{0}{k_{g_{s}}dX_{s}}.$$
\end{prop}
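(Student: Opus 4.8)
The plan is to prove the identity first for a generating family of predictable integrands and then to pass to general locally bounded predictable $k$ by a functional monotone class argument. By linearity and localization it suffices to treat \emph{bounded} predictable processes, and among these the processes of the form $k_{s}=H\,1_{]a,\infty[}(s)$, with $a\geq0$ and $H$ a bounded $\mathcal{F}_{a}$-measurable random variable, since such processes generate the predictable $\sigma$-field.

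For such a $k$ I would first identify the balayaged integrand explicitly. Set $T_{a}=\inf\{s>a:X_{s}=0\}$. Because $X$ is continuous, $T_{a}$ is the first hitting time of the closed set $\{0\}$ after $a$, hence a predictable stopping time (announced by the times at which $X$ enters $[-1/n,1/n]$). One checks that $\{g_{t}>a\}=\{T_{a}\leq t\}$, whence $k_{g_{t}}=H\,1_{[T_{a},\infty[}(t)$; since $T_{a}$ is predictable and $H$ is $\mathcal{F}_{a}$-measurable, the process $t\mapsto k_{g_{t}}$ is predictable, so the right-hand stochastic integral is well defined. The key computation is then immediate: using $X_{T_{a}}=0$ (this is where continuity of $X$ is essential) one gets
\[
\int_{0}^{t} k_{g_{s}}\,dX_{s}=H\,(X_{t}-X_{T_{a}})\,1_{\{T_{a}\leq t\}}=H\,X_{t}\,1_{\{T_{a}\leq t\}}=k_{g_{t}}X_{t},
\]
and since $g_{0}\leq a$ forces $k_{g_{0}}X_{0}=0$, the asserted formula holds in this base case.

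Next I would let $\mathcal{K}$ denote the collection of bounded predictable processes $k$ for which the formula holds. It is a vector space containing the constants and the generators $H\,1_{]a,\infty[}$ above, and it is closed under bounded monotone (hence bounded pointwise) limits: if $k^{(n)}\to k$ boundedly then $k^{(n)}_{g_{\cdot}}\to k_{g_{\cdot}}$ boundedly, so $k^{(n)}_{g_{t}}X_{t}\to k_{g_{t}}X_{t}$ and, by the dominated convergence theorem for stochastic integrals, $\int_{0}^{\cdot}k^{(n)}_{g_{s}}\,dX_{s}\to\int_{0}^{\cdot}k_{g_{s}}\,dX_{s}$ in the sense of uniform convergence in probability on compacts. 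By the functional monotone class theorem $\mathcal{K}$ contains every bounded predictable process, and a standard localization (stopping $X$ and truncating $k$) extends the identity to all locally bounded predictable $k$.

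The main obstacle I anticipate is not the base computation but the two measure-theoretic points underpinning it: verifying that $t\mapsto k_{g_{t}}$ is genuinely predictable, which rests on the predictability of the hitting times $T_{a}$ and, more generally, on the fact that $k_{g_{\cdot}}$ is constant on each excursion interval of $X$ and equals the value of $k$ at the excursion's left endpoint; and justifying the passage to the limit inside the stochastic integral so that the monotone class step is legitimate. Once these are secured, the continuity of $X$—which forces $X$ to vanish at every $T_{a}$ and thereby kills the boundary term—does all the real work.
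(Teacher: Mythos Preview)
The paper does not give a proof of this proposition: it is stated in Section~1.3 among several classical balayage formulas that are merely \emph{recalled} from the literature (the original source is Meyer--Stricker--Yor \cite{mey}; see also Yor \cite{y}). There is therefore no paper proof to compare against.

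Your argument is correct and is essentially the standard proof of the predictable balayage formula. The scheme---check the identity on the multiplicative generators $H\,1_{]a,\infty[}$ of the predictable $\sigma$-field by rewriting $k_{g_{\cdot}}=H\,1_{[T_{a},\infty[}$ and using $X_{T_{a}}=0$ (continuity of $X$), then pass to all bounded predictable $k$ by the functional monotone class theorem, and finally localize---is exactly how the result is proved in the references. The two technical points you single out (predictability of $t\mapsto k_{g_{t}}$ and stability of the stochastic integral under bounded pointwise limits) are the right ones, and your handling of them is adequate.
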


In next, we provide the result for càdlàg semimartingales.

\begin{prop}
Let $X$ be a continuous semimartingale and define $g_{t}=\sup\{s\leq t:X_{s}=0\}$. If $k$ is a bounded predictable process, then
$$k_{g_{t}}X_{t}=k_{g_{0}}X_{0}+\int_{0}{k_{g_{s}}dX_{s}}.$$
\end{prop}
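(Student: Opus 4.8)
The plan is to establish the balayage identity for a continuous semimartingale $X$ and a bounded predictable $k$ by a functional monotone class argument, reducing the whole statement to one explicit computation on a generating family of predictable processes. First I would record the elementary structural facts about $g_{t}=\sup\{s\leq t:X_{s}=0\}$: it is non-decreasing and right-continuous, it is constant (equal to the left endpoint) on every excursion interval of $X$ away from $0$, and the only times at which $k_{g_{t}}$ can move lie in the zero set $\{t\geq0:X_{t}=0\}$, where $X_{t}=0$. This last observation is the heuristic reason the product $k_{g_{t}}X_{t}$ is a semimartingale even though $k_{g_{\cdot}}$ need not be one: any movement of $k_{g_{\cdot}}$ is multiplied by $X=0$.

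Next I would set up the monotone class machinery. Let $\mathcal{H}$ be the family of bounded predictable $k$ for which the stated identity holds. Both sides are linear in $k$, so $\mathcal{H}$ is a vector space; it contains the constant process $1$, for which the identity reduces to $X_{t}=X_{0}+\int_{0}^{t}dX_{s}$; and it is stable under bounded monotone limits, since if $k^{n}\to k$ boundedly and pointwise on $\R_{+}\times\Omega$ then $k^{n}_{g_{s}}\to k_{g_{s}}$ pointwise and boundedly, whence $k^{n}_{g_{t}}X_{t}\to k_{g_{t}}X_{t}$ while the dominated convergence theorem for stochastic integrals gives $\int_{0}^{t}k^{n}_{g_{s}}dX_{s}\to\int_{0}^{t}k_{g_{s}}dX_{s}$ uniformly on compacts in probability. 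By the functional monotone class theorem it then suffices to verify the identity for $k$ ranging over a $\pi$-system generating the predictable $\sigma$-field, and I would use the sets $(a,\infty)\times A$ with $a\geq0$ and $A\in\mathcal{F}_{a}$, that is, $k_{s}=Z1_{(a,\infty)}(s)$ for a bounded $\mathcal{F}_{a}$-measurable $Z$.

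The core of the argument is a single explicit computation for such a $k$. Introduce the stopping time $T_{a}=\inf\{s>a:X_{s}=0\}$; continuity of $X$ gives $X_{T_{a}}=0$ on $\{T_{a}<\infty\}$, and one checks $\{g_{t}>a\}=\{t\geq T_{a}\}$, so $k_{g_{t}}=Z1_{\{t\geq T_{a}\}}$. Therefore the left-hand side equals $Z1_{\{t\geq T_{a}\}}X_{t}=Z\left(X_{t}-X_{t\wedge T_{a}}\right)$, using $X_{T_{a}}=0$. For the right-hand side I would replace the integrand $k_{g_{s}}=Z1_{[T_{a},\infty)}(s)$ by the predictable process $Z1_{(T_{a},\infty)}(s)$, the two differing only on the graph $[\![T_{a}]\!]$, which is $dX$-negligible because $X$ is continuous, so the stochastic integral is unchanged. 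Since $Z$ is $\mathcal{F}_{a}\subseteq\mathcal{F}_{T_{a}}$-measurable, $Z1_{(T_{a},\infty)}$ is predictable and $\int_{0}^{t}Z1_{(T_{a},\infty)}(s)dX_{s}=Z\left(X_{t}-X_{t\wedge T_{a}}\right)$, matching the left-hand side. The remaining generator $\{0\}\times A$ with $A\in\mathcal{F}_{0}$ contributes only the boundary term $k_{g_{0}}X_{0}$ and is dealt with directly through the convention $g_{0}=0$ on $\{X_{0}=0\}$.

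I expect the genuine obstacle to be the measurability status of the integrand $s\mapsto k_{g_{s}}$: for an arbitrary bounded predictable $k$ this process is merely optional, so $\int_{0}^{t}k_{g_{s}}dX_{s}$ must be interpreted with care. The resolution, which I would isolate as a short preliminary lemma proved by the same monotone class scheme, is that $k_{g_{\cdot}}$ always coincides $dX$-almost everywhere with a predictable process — on the generators this is precisely the replacement of $1_{[T_{a},\infty)}$ by $1_{(T_{a},\infty)}$, and continuity of $X$ ensures the discrepancy set carries no $dX$-mass. Granting this, the stochastic integral is well defined and the dominated convergence step in the monotone class passage is legitimate, which closes the proof.
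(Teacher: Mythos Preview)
The paper does not actually prove this proposition: it appears in Section~1.3, ``Recall of some useful balayage formulas,'' as a result quoted from the literature (the balayage formula of Meyer, Stricker and Yor), with no accompanying argument. So there is no paper proof to compare against.

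Your proposal is correct and is essentially the classical proof of the balayage identity. The monotone class reduction to the generators $k_{s}=Z\,1_{(a,\infty)}(s)$ with $Z$ bounded $\mathcal{F}_{a}$-measurable, together with the identification $\{g_{t}>a\}=\{T_{a}\leq t\}$ for $T_{a}=\inf\{s>a:X_{s}=0\}$, are exactly the standard ingredients; the explicit computation $k_{g_{t}}X_{t}=Z(X_{t}-X_{t\wedge T_{a}})=\int_{0}^{t}Z\,1_{(T_{a},\infty)}(s)\,dX_{s}$ is correct and uses continuity of $X$ in both places you flag (to get $X_{T_{a}}=0$ and to pass from $1_{[T_{a},\infty)}$ to $1_{(T_{a},\infty)}$ at no $dX$-cost). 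Your remark that $k_{g_{\cdot}}$ is in general only optional but coincides $dX$-a.e.\ with a predictable process is the genuine technical point, and your proposed resolution via the same monotone class scheme is the standard one.

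One incidental observation: the sentence preceding this proposition in the paper announces ``the result for c\`adl\`ag semimartingales,'' yet the statement itself reads ``continuous semimartingale''; this looks like a typographical slip in the paper. Your argument is tailored to the continuous case, which matches the statement as written.
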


The balayage formulas for continuous semi-martingales in the progressive case, are critical tools in this study. We recall these results below.

\begin{prop}\label{balpro}
Let $Y$ be a continuous semi-martingale and $\gamma^{'}_{t}=\sup\{s\leq t:Y_{s}=0\}$. Let $k$ be a bounded progressive process, where ${^{p}k_{\cdot}}$ denotes its predictable projection. Then,
$$k_{\gamma^{'}_{t}}Y_{t}=k_{0}Y_{0}+{\int_{0}^{t}{^{p}k_{\gamma^{'}_{s}}dY_{s}}+R_{t}},$$
where $R$ is an adapted, continuous process with bounded variations such that $dR_{t}$ is carried by the set $\{Y_{s}=0\}$.
\end{prop}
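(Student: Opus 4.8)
The plan is to bootstrap from the balayage formula for predictable integrands recalled just above, isolating the genuinely non-predictable contribution of $k$ into the remainder term $R$. Since the predictable projection ${}^{p}k$ is a bounded predictable process, the predictable balayage formula applies to it and gives
\begin{equation*}
{}^{p}k_{\gamma'_{t}}\,Y_{t}={}^{p}k_{0}\,Y_{0}+\int_{0}^{t}{}^{p}k_{\gamma'_{s}}\,dY_{s}.
\end{equation*}
Subtracting this identity from the one to be proved, the whole statement becomes equivalent to showing that
\begin{equation*}
R_{t}:=k_{\gamma'_{t}}Y_{t}-{}^{p}k_{\gamma'_{t}}Y_{t}-\big(k_{0}-{}^{p}k_{0}\big)Y_{0}
\end{equation*}
is adapted, continuous, of finite variation, with $dR$ carried by $\{s\geq0:Y_{s}=0\}$. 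Thus the target reduces to an analysis of the single process $k_{\gamma'_{\cdot}}Y_{\cdot}-{}^{p}k_{\gamma'_{\cdot}}Y_{\cdot}$.

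First I would record the soft facts. Writing $\R_{+}\setminus\{Y=0\}=\bigcup_{n}\,]g_{n},d_{n}[$ as in the preliminaries, the last-exit time $\gamma'_{t}$ is frozen at the left endpoint $g_{n}$ throughout each excursion interval, while on $\{Y=0\}$ one has $\gamma'_{t}=t$ and $Y_{t}=0$; hence both $k_{\gamma'_{t}}Y_{t}$ and ${}^{p}k_{\gamma'_{t}}Y_{t}$ vanish on $\{Y=0\}$ and, by continuity of $Y$ at the excursion endpoints, are continuous. This already yields that $R$ is adapted, continuous, and vanishes on $\{Y=0\}$. The decisive structural input is that the quadratic variation $\langle Y,Y\rangle$ does not charge the zero set, i.e. $\int_{0}^{t}\mathbf{1}_{\{Y_{s}=0\}}\,d\langle Y,Y\rangle_{s}=0$, so that any stochastic integral whose integrand is supported on $\{Y=0\}$ has vanishing martingale part.

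The main obstacle is to prove that $R$ is of finite variation with $dR$ carried by $\{Y=0\}$; equivalently, that $k_{\gamma'_{\cdot}}Y_{\cdot}-{}^{p}k_{\gamma'_{\cdot}}Y_{\cdot}$ has no martingale component and grows only on the zero set. I would attack this by a functional monotone-class reduction: the asserted identity is linear in $k$, and both members are stable under uniformly bounded monotone limits (dominated convergence for the stochastic integral, the corresponding stability of predictable projections, and closedness of the class of continuous finite-variation processes carried by a fixed set), so it suffices to verify the claim on a multiplicative family generating the progressive $\sigma$-field: on predictable generators, already covered by the recalled predictable balayage formula with $R\equiv0$, and on optional generators of the form $k_{s}=Z\,\mathbf{1}_{[0,S]}(s)$ with $S$ a stopping time and $Z$ bounded and $\mathcal{F}_{S}$-measurable. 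For such a generator ${}^{p}k$ is explicit, so $R$ can be written down; on each excursion interval the frozen increment of $k_{\gamma'_{\cdot}}Y_{\cdot}-{}^{p}k_{\gamma'_{\cdot}}Y_{\cdot}$ is absorbed into the integral term up to an integrand supported on $\{Y=0\}$, whose martingale part vanishes by the quadratic-variation fact above, leaving a continuous finite-variation process that increases only on $\{Y=0\}$. The delicate bookkeeping — matching, along the last-exit times $\gamma'_{s}$, the frozen values of $k$ with those of its predictable projection up to $d\langle Y,Y\rangle\otimes d\P$-negligible sets — is where the real work lies, and is the step I expect to be the crux.
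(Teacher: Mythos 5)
Your attempt targets a statement the paper itself never proves: Proposition \ref{balpro} is \emph{recalled} as a known result (the balayage formula in the progressive case of Meyer--Stricker--Yor \cite{mey}; see also \cite{y} and its use in \cite{siam}), so the only benchmark is the classical argument and the way the paper actually invokes the formula (proof of Lemma \ref{l5}). Measured that way, your proof has a fatal gap at the very first step. You read the integrand as $({}^{p}k)_{\gamma'_{s}}$ --- project $k$ first, then compose with the last-zero times --- and your entire strategy (subtract the predictable balayage formula written for ${}^{p}k$) rests on that reading. But the correct integrand, the one in \cite{mey} and the one the paper uses when it writes ${}^{p}(k_{g_{s}})$ in the proof of Lemma \ref{l5}, is ${}^{p}(k_{\gamma'_{\cdot}})_{s}$: the predictable projection of the \emph{already frozen} process $s\mapsto k_{\gamma'_{s}}$. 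Projection does not commute with evaluation at the times $\gamma'_{s}$, because these are honest times, not stopping times. Concretely, your reduction demands that $D_{t}=\bigl(k_{\gamma'_{t}}-({}^{p}k)_{\gamma'_{t}}\bigr)Y_{t}$ be of finite variation with $dD$ carried by $\{Y=0\}$; on an excursion interval $]g_{n},d_{n}[$ one has $\gamma'_{t}\equiv g_{n}$, so $D_{t}=\bigl(k_{g_{n}}-({}^{p}k)_{g_{n}}\bigr)Y_{t}$ is a fixed random multiple of $Y_{t}$, hence of finite variation only if $k_{g_{n}}=({}^{p}k)_{g_{n}}$ a.s. That identity is false in general, even for the multiplicative generators you propose. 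Counterexample: let $R$ be a $\mathrm{BES}(3)$ process started at $0$, $Y_{t}=R_{(t-1)^{+}}$ (so $\{Y=0\}=[0,1]$ and the single excursion starts at $g_{0}=1$), let $\xi=\pm1$ be an independent fair coin revealed to the filtration at time $1$, and $k_{t}=\xi 1_{\{t\geq1\}}$, a bounded optional process. Then ${}^{p}k_{t}=\xi 1_{\{t>1\}}$, so $({}^{p}k)_{\gamma'_{s}}\equiv 0$, and your reduced claim asserts that $k_{\gamma'_{t}}Y_{t}=\xi Y_{t}$ ($t\geq 1$) has finite variation with differential carried by $[0,1]$ --- false. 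With the correct integrand ${}^{p}(k_{\gamma'_{\cdot}})_{s}=\xi 1_{\{s>1\}}$ the formula holds with $R\equiv0$. So the difficulty you defer to the end (``matching the frozen values of $k$ with those of its predictable projection'') is not delicate bookkeeping; it is provably impossible, and no subtraction of the predictable formula for ${}^{p}k$ can produce the right integrand.

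A second, independent gap: your monotone-class scheme runs over predictable processes and over $k_{s}=Z1_{[0,S]}(s)$ with $S$ a stopping time, and that family generates only the \emph{optional} $\sigma$-field. The progressive $\sigma$-field is strictly larger, and the processes for which Proposition \ref{balpro} is actually needed in this paper --- the sign processes $Z^{Y}$, $\mathcal{Z}^{Y}$ of \eqref{zalpha}--\eqref{Zalpha} entering Propositions \ref{pzalph} and \ref{siam} --- are precisely progressive and not optional; their values at the excursion starts $g_{n}$ (random times avoiding all stopping times) are invisible to any approximation built from your generators. Also, the class of continuous finite-variation processes with $dR$ carried by a fixed set is not closed under the bounded pointwise limits a monotone-class argument supplies, so the limiting step would need uniform variation bounds you do not have. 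Your soft facts are fine (continuity of $k_{\gamma'_{\cdot}}Y$, its vanishing on $\{Y=0\}$, and $d\langle Y,Y\rangle$ not charging $\{Y=0\}$ by the occupation-times formula), but in the classical proof they come \emph{after} the real work, which is the identification of ${}^{p}(k_{\gamma'_{\cdot}})$ with $k_{\gamma'_{\cdot}}$ off the zero set; that identification is exactly what your approach tries to bypass.
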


Proposition \ref{balpro} is a powerful and interesting tool. However, the fact that we know nothing about the form of the process $R$ can be limiting. The processes $Z^{Y}$ and $\mathcal{Z}^{Y}$ are critical to this study. Bouhadou and Ouknine \cite{siam} identified the process $R$ of Proposition \ref{balpro} when the progressive process $k$ is equal to $Z^{\alpha}$ or $\mathcal{Z}^{\alpha}$. We recall these results below.  

\begin{prop}[\textbf{Ouknine and Bouhadou \cite{siam}}]\label{pzalph}
Let $Y$ be a continuous semi-martingale and $Z^{Y}$ be the process defined in \eqref{zalpha}. Then,
$$Z^{Y}_{t}Y_{t}=\int_{0}^{t}{Z^{Y}_{s}dY_{s}}+(2\alpha-1)L_{t}^{0}(Z^{Y}Y),$$
where $L_{\cdot}^{0}(Z^{Y}Y)$ is the local time of the semi-martingale $Z^{Y}Y$.
\end{prop}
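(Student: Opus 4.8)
The plan is to read the claimed identity as an explicit identification of the residual finite-variation term in the progressive balayage formula (Proposition~\ref{balpro}) applied to the progressive process $k=Z^{Y}$. First I would record the pathwise structure of $W:=Z^{Y}Y$. On each excursion interval $]g_{n},d_{n}[$ the process $Z^{Y}$ equals the constant $\zeta_{n}$, so $W=\zeta_{n}Y$ there and $dW=\zeta_{n}\,dY=Z^{Y}\,dY$; moreover $W$ is continuous, since $Y\to0$ at the excursion endpoints, which are exactly the points of $\{Y=0\}$ where $W=0$. Setting $R:=W-\int_{0}^{\cdot}Z^{Y}_{s}\,dY_{s}$, this observation shows $dR=0$ off $\{Y=0\}$, so $R$ is a continuous process of finite variation whose Stieltjes measure is carried by $\{Y_{s}=0\}=\{W_{s}=0\}$. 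This is precisely the qualitative conclusion of Proposition~\ref{balpro}, and it already exhibits $W$ as a semimartingale; the entire content of the statement is thus reduced to proving $R=(2\alpha-1)L^{0}(W)$.

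Next I would identify $R$ with a multiple of the local time. Since $Z^{Y}=\pm1$ on excursions, one has $|W|=|Y|$ and, at the level of the martingale part, $\langle W\rangle=\int_{0}^{\cdot}(Z^{Y}_{s})^{2}\,d\langle Y\rangle_{s}=\langle Y\rangle$, because $(Z^{Y})^{2}=\1_{\{Y\neq0\}}$ and $\int\1_{\{Y=0\}}\,d\langle Y\rangle=0$. By the occupation-times formula $L^{0}(W)=L^{0}(Y)$ is then well defined and its measure $dL^{0}(W)$ is also carried by $\{W=0\}$. Both $dR$ and $dL^{0}(W)$ therefore charge the same set, and it remains to match them. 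I would do this through Tanaka's formula for $W^{+}$ (equivalently, by comparing $W^{+}$ and $W^{-}$): across each visit to $0$, an excursion marked $\zeta_{n}=+1$ contributes to $W$ with the natural sign of the $Y$-excursion, while one marked $\zeta_{n}=-1$ contributes with the opposite sign, so the infinitesimal push carried by the zero set is weighted by $\zeta_{n}$. Because the marks $(\zeta_{n})$ are i.i.d., independent of $Y$, with $\E[\zeta_{n}]=\alpha-(1-\alpha)=2\alpha-1$, the excursion-theoretic master formula lets one replace the random signs by their common mean, and the net coefficient of $dL^{0}(W)$ comes out as $2\alpha-1$, giving $dR=(2\alpha-1)\,dL^{0}(W)$ and hence the claim after integration from $0$.

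\textbf{Main obstacle.} The delicate step is the extraction of the deterministic constant $2\alpha-1$ from the pathwise random signs. Since $Z^{Y}$ is not a semimartingale, It\^o's formula cannot be applied to $W=Z^{Y}Y$ directly, and the correction living on $\{W=0\}$ cannot be obtained by a naive integration by parts; one is forced to use excursion theory together with the independence of the marks $\zeta_{n}$ from the excursion point process (equivalently, to compute the predictable projection of $Z^{Y}$ at the left endpoints of the excursions, which averages the sign to $2\alpha-1$). Two auxiliary points also require care and I would settle them first: the justification that $R$ is genuinely of finite variation, rather than merely continuous and constant on excursions, and the identity $L^{0}(W)=L^{0}(Y)$, both of which rest on $|W|=|Y|$ and $\langle W\rangle=\langle Y\rangle$.
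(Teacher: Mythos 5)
First, a point of reference: the paper never proves Proposition \ref{pzalph}; it is quoted from Bouhadou and Ouknine \cite{siam} as a preliminary, so your attempt can only be measured against that source. Your skeleton (use the progressive balayage formula, Proposition \ref{balpro}, to reduce everything to identifying $R=Z^{Y}Y-\int_{0}^{\cdot}Z^{Y}_{s}\,dY_{s}$ with $(2\alpha-1)L^{0}(Z^{Y}Y)$) is indeed the right one, but the identification itself --- the entire content of the result --- is exactly where your proof stops being a proof: ``the excursion-theoretic master formula lets one replace the random signs by their common mean'' asserts the conclusion rather than deriving it. What is needed there is, for instance, the pair of one-sided local-time identities $L^{0,+}(Z^{Y}Y)=\alpha L^{0}(Y)$ and $L^{0,-}(Z^{Y}Y)=(1-\alpha)L^{0}(Y)$, obtained by a genuine law-of-large-numbers argument over the infinitely many excursions, using that the marks $\zeta_{n}$ are i.i.d.\ and independent of $Y$. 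That this step cannot be waved through is shown by the fact that the statement, in the generality in which the paper (and you) phrase it, is false: take $Y=B$ a standard Brownian motion and $\alpha=1$. Then $Z^{Y}=\1_{\{B\neq0\}}$, so $Z^{Y}_{t}B_{t}=B_{t}$ and $\int_{0}^{t}Z^{Y}_{s}\,dB_{s}=B_{t}$ (since $\int_{0}^{t}\1_{\{B_{s}=0\}}\,dB_{s}=0$ by It\^o's isometry), whereas $(2\alpha-1)L_{t}^{0}(B)=L_{t}^{0}(B)>0$. The true theorem of \cite{siam} flips excursions of nonnegative processes (reflected Brownian motion); for signed $Y$ the sign of an excursion of $Z^{Y}Y$ is $\zeta_{n}$ times the sign of the corresponding excursion of $Y$, which is precisely what your ``push weighted by $\zeta_{n}$'' picture ignores. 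A correct proof must use $Y\geq0$ at this point; yours never does, so the gap is not reparable as written.

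The two points you call ``auxiliary'' are also not minor, and your proposed justifications for them fail. First, $L^{0}(Z^{Y}Y)=L^{0}(Y)$ does not follow from the occupation-times formula: occupation densities determine $L^{a}$ only for Lebesgue-almost every level $a$, and in fact (for $Y\geq0$) the right local times satisfy $L^{0}(Z^{Y}Y)=\alpha L^{0}(Y)$; only the symmetric local times agree, and knowing that is essentially equivalent to the theorem (relatedly, the identity is sensitive to the local-time convention, which your argument never fixes). Second, finite variation of $R$ does not follow from $R$ being continuous and constant on each excursion interval (a continuous function constant on the complementary intervals of a closed set can have infinite variation), nor from $|Z^{Y}Y|=|Y|$ and $\langle Z^{Y}Y\rangle=\langle Y\rangle$; the only available route is Proposition \ref{balpro}, and invoking it obliges you to handle what the proposal skips entirely: $Z^{Y}$ is not adapted to the filtration of $Y$ (one must enlarge the filtration by the marks, check that $Y$ remains a semimartingale there --- easy, by independence --- and check that $Z^{Y}$ is progressively measurable), and the balayage integrand is a predictable projection, which must then be compared with $Z^{Y}$ itself.
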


\begin{prop}[\textbf{Ouknine and Bouhadou \cite{siam}}]\label{siam}
Let $Y$ be a continuous semi-martingale and $Z^{Y}$ be the process defined in \eqref{zalpha}. Then,
$$\mathcal{Z}^{Y}_{t}Y_{t}=\int_{0}^{t}{\mathcal{Z}^{Y}_{s}dY_{s}}+\int_{0}^{t}{(2\alpha(s)-1)dL_{s}^{0}(\mathcal{Z}^{Y}Y)},$$
where $L_{\cdot}^{0}(\mathcal{Z}^{Y}Y)$ is the local time of the semi-martingale $\mathcal{Z}^{Y}Y$.
\end{prop}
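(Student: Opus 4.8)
The plan is to reduce this piecewise-constant statement to the single-parameter balayage formula of Proposition~\ref{pzalph} by localising on the blocks of the partition $(0=t_{0}<t_{1}<\cdots<t_{m})$ on which $\alpha$ is constant. The first step I would record is a pathwise identity linking $\mathcal{Z}^{Y}$ to genuine $Z^{Y}$-type processes: for each index $i$ set $W^{i}_{t}=\sum_{n\geq0}\zeta^{i}_{n}1_{]g_{n},d_{n}[}(t)$, which is exactly a process of the form \eqref{zalpha} built with the constant parameter $\alpha_{i}$ and the Bernoulli sequence $(\zeta^{i}_{n})_{n}$. Comparing \eqref{zalpha} with \eqref{Zalpha}, one sees that $\mathcal{Z}^{Y}_{t}=W^{i}_{t}$ for every $t\in[t_{i},t_{i+1})$; hence on each block $\mathcal{Z}^{Y}$ coincides with a process to which Proposition~\ref{pzalph} applies directly.

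The second step is to apply Proposition~\ref{pzalph} to each $W^{i}$ and read off its increment over $[t_{i},t_{i+1}]$, namely $W^{i}_{t_{i+1}}Y_{t_{i+1}}-W^{i}_{t_{i}}Y_{t_{i}}=\int_{t_{i}}^{t_{i+1}}W^{i}_{s}\,dY_{s}+(2\alpha_{i}-1)(L^{0}_{t_{i+1}}(W^{i}Y)-L^{0}_{t_{i}}(W^{i}Y))$. Since $\mathcal{Z}^{Y}=W^{i}$, and hence $\mathcal{Z}^{Y}Y=W^{i}Y$, on $[t_{i},t_{i+1})$, the integrand may be rewritten as $\mathcal{Z}^{Y}_{s}$, and the local-time increment of $W^{i}Y$ equals that of $\mathcal{Z}^{Y}Y$ over the interior of the block, the two semimartingales sharing the same zeros and the same excursions there. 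Summing over $i=0,\dots,j$ for $t\in[t_{j},t_{j+1})$ and using $\alpha(s)\equiv\alpha_{i}$ on the $i$-th block, the stochastic integrals assemble into $\int_{0}^{t}\mathcal{Z}^{Y}_{s}\,dY_{s}$ and the local-time contributions into $\int_{0}^{t}(2\alpha(s)-1)\,dL^{0}_{s}(\mathcal{Z}^{Y}Y)$, leaving only the contributions at the partition points to be reconciled.

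That reconciliation is the step I expect to be the main obstacle. An excursion of $Y$ may straddle a partition point $t_{i}$; there $\mathcal{Z}^{Y}$ switches sign from $\zeta^{i-1}_{n}$ to $\zeta^{i}_{n}$ while $Y_{t_{i}}\neq0$, so that $\mathcal{Z}^{Y}Y$ acquires a genuine jump of size $(\zeta^{i}_{n}-\zeta^{i-1}_{n})Y_{t_{i}}$, and the naive telescoping produces these finitely many junction terms in addition to the clean right-hand side. I would analyse them through the integration-by-parts decomposition $\mathcal{Z}^{Y}Y=\int\mathcal{Z}^{Y}\,dY+\int Y\,d\mathcal{Z}^{Y}$, which is licit because the continuity of $Y$ forces $[\mathcal{Z}^{Y},Y]=0$: the measure $d\mathcal{Z}^{Y}$ charges the excursion endpoints (where $Y=0$, producing the local time through the delicate balayage passage already encoded in Proposition~\ref{pzalph}) and the junctions, and the task is to show that the junction contributions are exactly absorbed rather than surviving as spurious jump terms. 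Matching these junction terms against the block-wise coefficients $(2\alpha_{i}-1)$, rather than the interior balayage on each block — which is handed to Proposition~\ref{pzalph} verbatim — is where the genuinely new work lies.
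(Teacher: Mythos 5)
The paper itself gives no proof of this proposition: it is recalled, without argument, as a result of Bouhadou and Ouknine \cite{siam}. So your attempt can only be judged on its own merits, and it contains a genuine gap — precisely at the step you defer to the end. The "junction terms" are not a reconciliation detail; they are where the argument collapses. Reading the definition \eqref{Zalpha} literally (which is what your first step uses), an excursion $]g_{n},d_{n}[$ straddling a partition point $t_{i}$ gives $\mathcal{Z}^{Y}$ the value $\zeta^{i-1}_{n}$ just before $t_{i}$ and the independently sampled value $\zeta^{i}_{n}$ just after, so with positive probability $\mathcal{Z}^{Y}Y$ has a genuine jump $(\zeta^{i}_{n}-\zeta^{i-1}_{n})Y_{t_{i}}\neq 0$. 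But both terms on the right-hand side of the identity you are trying to prove are continuous in $t$: the stochastic integral because $Y$ is a continuous semimartingale, and the local-time term because the Meyer--Tanaka local time of a c\`adl\`ag semimartingale is a continuous increasing process (jump contributions are carried by a separate compensating sum, not by $L^{0}$). Consequently your block-telescoping must leave the surviving term $\sum_{0<t_{i}\leq t}(\Delta\mathcal{Z}^{Y}_{t_{i}})Y_{t_{i}}$, and no argument can make it vanish: under the literal reading of \eqref{Zalpha} the stated identity is simply not a pathwise truth. The identity is coherent only under the convention that each excursion keeps a single sign throughout, namely $\zeta^{i}_{n}$ with $i$ the index of the block containing the left endpoint $g_{n}$ — the convention that makes $\mathcal{Z}^{Y}Y$ continuous and that actually produces the time-inhomogeneous skew Brownian motion. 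Under that convention, however, your first step fails: on $[t_{i},t_{i+1})$ the process $\mathcal{Z}^{Y}$ does \emph{not} coincide with $W^{i}$, because excursions begun in earlier blocks still carry their old signs, so Proposition \ref{pzalph} cannot be applied verbatim block by block; one would instead need a conditional, restarted version of the homogeneous formula valid for a process that does not vanish at the start of the block.

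A secondary but real flaw is the tool you propose for analysing the junctions: the integration by parts $\mathcal{Z}^{Y}Y=\int\mathcal{Z}^{Y}\,dY+\int Y\,d\mathcal{Z}^{Y}$, justified by "$[\mathcal{Z}^{Y},Y]=0$". This presupposes that $\mathcal{Z}^{Y}$ is a semimartingale, which it is not: on a finite interval $Y$ typically has infinitely many excursions, so $\mathcal{Z}^{Y}$ has infinite variation and $\int Y\,d\mathcal{Z}^{Y}$ is not a Stieltjes integral; worse, $\mathcal{Z}^{Y}$ is not even adapted to the underlying filtration (knowing which excursion contains $t$, hence which $\zeta^{i}_{n}$ applies, requires information about the future of $Y$). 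This is exactly why this circle of results is built on the \emph{progressive} balayage formula, Proposition \ref{balpro}, rather than on It\^o calculus applied to $\mathcal{Z}^{Y}$ itself. Any correct proof along the lines of Bouhadou--Ouknine has to run through the predictable projection of the sign process inside the balayage formula, not through an integration by parts against $d\mathcal{Z}^{Y}$.
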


\section{A general framework for a larger family of relative martingales}\label{sec:2}

In this section, we bring contributions to the study of stochastic processes of the form: $M=m+v$, where $m$ is a martingale and $v$ is a process with finite variations such that $dv$ is carried by $H$. Note that a known subfamily of such processes is the class of relative martingales, $\mathcal{R}(H)$. Here, we extend this notion of relative martingales to semimartingales which don't necessary vanish on $H$ and whose the martingale part is not necessary uniformly integrable. More precisely, we  provide a general framework to a larger class of processes that we shall name, class $\mathcal{M}(H)$.

\subsection{Some examples}

Now, we shall provide some examples of the class $\mathcal{M}(H)$. First remark a natural example which is, the process $M=|D|$. In fact, all processes of  the class $\mathcal{R}(H)$ and all elements, $X$ of the class $(\Sigma)$ such that $\{t\geq0:X_{t}=0\}\subset H$, are in the class $\mathcal{M}(H)$. However, there also exist stochastic processes which don't necessary vanish on $H$. In next, we provide some such examples.

\begin{exple}\label{ex1}

	 Any semimartingale $M=m+v$ such that $M_{0}=0$ and $DM-\langle D,M\rangle$ is a local martingale, is an element of the class $\mathcal{M}(H)$. Indeed, We have from integration by parts that 
	$$D_{t}M_{t}=\int_{0}^{t}{M_{s}dD_{s}}+\int_{0}^{t}{D_{s}dm_{s}}+\int_{0}^{t}{D_{s}dv_{s}}+\langle D,M\rangle_{t}.$$
	Hence, it follows that $\int_{0}^{t}{D_{s}dv_{s}}=0$. That is, $dv$ is carried by $H$. This proves that $M\in\mathcal{M}(H)$.
\end{exple}

\begin{exple}\label{ex2}

Let $m$ be a càdlàg local martingale which vanishes at zero and $X$ be a continuous process of the class $(\Sigma)$ such that $\{t\geq0:X_{t}=0\}\subset H$. Hence, the following processes are in the class $\mathcal{M}(H)$:
\begin{itemize}	
	\item $X^{1}=\min{(m,m-X)}$;
	\item $X^{2}=\min{(m,m+X)}$;
	\item $X^{3}=\min{(m-X,m+X)}$;
	\item $X^{4}=\max{(m,m-X)}$;
	\item $X^{5}=\max{(m,m+X)}$,
	\item $X^{6}=\max{(m-X,m+X)}$.
\end{itemize}
\end{exple}

\subsection{Some structural properties}

Here, we shall explore some general properties satisfied by stochastic processes of the class  $\mathcal{M}(H)$. Hence, we start by the next remark: 

\begin{rem}\label{r1}
	The class $\mathcal{M}(H)$ is a vector space.
\end{rem}

In what follows, we derive some properties related to the notion of stochastic integral.
\begin{lem}\label{l1}
Let $M=m+v$ be a process of the class $\mathcal{M}(H)$. The following hold:
\begin{enumerate}
  \item For any locally bounded predictable process $h$, $\int_{0}^{\cdot}{h_{s}dM_{s}}$ is an element of the class $\mathcal{M}(H)$.
	\item If $h$ is a locally bounded predictable process null on $H$. Hence, $\int_{0}^{\cdot}{h_{s}dM_{s}}$  and $\int_{0}^{\cdot}{h_{s-}dM_{s}}$ are local martingales. 
\end{enumerate}
\end{lem}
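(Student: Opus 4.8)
The plan is to prove each assertion by reducing the stochastic integral to its defining decomposition $M=m+v$ and checking the three conditions of Definition \ref{d1} directly.

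For part (1), I would write $\int_{0}^{\cdot}{h_{s}dM_{s}}=\int_{0}^{\cdot}{h_{s}dm_{s}}+\int_{0}^{\cdot}{h_{s}dv_{s}}$ and identify these two summands as the local martingale part and the finite-variation part of the candidate relative martingale. The first integral is a local martingale because $h$ is locally bounded and predictable and $m$ is a c\`adl\`ag local martingale, and it vanishes at $0$; the second integral is continuous, adapted, of finite variation (since $h$ is locally bounded and $v$ has finite variation) and vanishes at $0$. The only substantive point is condition (3): I must check that $d\left(\int{h\,dv}\right)$ is carried by $H$. This follows because $\int_{0}^{t}{1_{\mathcal{H}^{c}}(s)\,h_{s}\,dv_{s}}=\int_{0}^{t}{h_{s}1_{\mathcal{H}^{c}}(s)\,dv_{s}}=0$, using the hypothesis that $dv$ is carried by $H$, i.e. $1_{\mathcal{H}^{c}}\,dv\equiv 0$ as a signed measure, so multiplying its density by the locally bounded $h$ still gives the zero measure off $H$. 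Hence $\int_{0}^{\cdot}{h_{s}\,dM_{s}}\in\mathcal{M}(H)$.

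For part (2), the extra hypothesis is that $h$ vanishes on $H$. The key observation is that the finite-variation contribution now disappears entirely: since $dv$ is carried by $H$ and $h=0$ on $H$, we get $\int_{0}^{t}{h_{s}\,dv_{s}}=\int_{0}^{t}{h_{s}1_{\mathcal{H}}(s)\,dv_{s}}=0$ for all $t$, because on the support of $dv$ the integrand $h$ is identically zero. Therefore $\int_{0}^{\cdot}{h_{s}\,dM_{s}}=\int_{0}^{\cdot}{h_{s}\,dm_{s}}$, which is a local martingale. For the left-limit version $\int_{0}^{\cdot}{h_{s-}\,dM_{s}}$, I would argue analogously: $h_{-}$ is predictable and locally bounded, $\int{h_{s-}\,dm_{s}}$ is a local martingale, and $\int_{0}^{t}{h_{s-}\,dv_{s}}=0$ since $v$ is \emph{continuous} and $dv$ is carried by the closed set $H$, so that $h_{s-}=0$ for $dv$-almost every $s$ (the set where $D=0$ is closed, and at its points the left limit of $h$ also vanishes by continuity of the relevant paths).

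The main obstacle I anticipate is the last claim, namely justifying $\int{h_{s-}\,dv_{s}}=0$ carefully. One cannot simply say $h_{-}=0$ on $H$, because $h$ may jump, so its left limit at a point of $H$ need not coincide with its value there. The clean way around this is to use that $dv$ is carried by the closed set $H=\{D=0\}$ and that $v$ is continuous: on the (closed) support of $dv$, which is contained in $H$, the process $h$ vanishes, and because $v$ has no jumps, the integral against $dv$ sees only the right-continuous integrand's behaviour on $H$ where it is zero; more precisely one reduces to $\int 1_{\mathcal{H}}(s)h_{s-}\,dv_{s}$ and notes that the set of times $s\in H$ at which $h_{s-}\neq h_{s}=0$ is countable, hence $dv$-null since $dv$ is carried by $H$ and $v$ is continuous (continuous measures charge no countable set). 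This is the only place requiring genuine care; the rest is linearity of the stochastic integral together with the two observations that locally bounded predictable integrands preserve the local martingale property and preserve the ``$dv$ carried by $H$'' property.
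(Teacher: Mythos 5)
Your proof is correct and follows essentially the same route as the paper: split the integral along $M=m+v$, verify the three defining conditions of $\mathcal{M}(H)$ for part (1), and for part (2) kill the finite-variation term using that $h$ vanishes on $H$ while $dv$ is carried by $H$, reducing $\int_0^{\cdot}h_s\,dM_s$ to $\int_0^{\cdot}h_s\,dm_s$. Your treatment of $\int_0^t h_{s-}\,dv_s$ is in fact more careful than the paper's, which justifies $\int_0^t h_{s-}\,dv_s=\int_0^t h_s\,dv_s$ by asserting that ``$h$ is continuous'' (evidently a slip for the continuity of $v$, i.e.\ atomlessness of $dv$, which is precisely the countable-jump-set argument you spell out).
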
 
\begin{proof}
 We have $\forall t\geq0$,
	$$\int_{0}^{t}{h_{s}dM_{s}}=\int_{0}^{t}{h_{s}dm_{s}}+\int_{0}^{t}{h_{s}dv_{s}}$$
	and
	$$\int_{0}^{t}{h_{s-}dM_{s}}=\int_{0}^{t}{h_{s-}dm_{s}}+\int_{0}^{t}{h_{s-}dv_{s}}.$$
	But, $\int_{0}^{t}{h_{s-}dv_{s}}=\int_{0}^{t}{h_{s}dv_{s}}$ because $h$ is continuous. 
	\begin{enumerate}
	\item Hence, $\int_{0}^{\cdot}{h_{s}dM_{s}}\in\mathcal{M}(H)$ since $\int_{0}^{\cdot}{h_{s}dm_{s}}$ is a local martingale and it is obvious to see that $A=\int_{0}^{\cdot}{h_{s}dv_{s}}$ is a process with finite variations such that $dA$ is carried by $H$.
	\item Since $h$ vanishes on $H$ and $dv$ is carried by $H$, we obtain that $\int_{0}^{t}{h_{s}dv_{s}}=0$.  Consequently, $\int_{0}^{\cdot}{h_{s}dM_{s}}$  and $\int_{0}^{\cdot}{h_{s-}dM_{s}}$ are local martingales.
\end{enumerate}
\end{proof}

\begin{lem}\label{l2}
For any processes $M$ and $W$ of the class $\mathcal{M}(H)$, $MW-[ M,W]_{\cdot}$ is also an element of the class $\mathcal{M}(H)$.
\end{lem}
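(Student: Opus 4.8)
The plan is to prove that $MW-[M,W]$ belongs to $\mathcal{M}(H)$ by writing out the integration-by-parts (product) formula and showing that the resulting decomposition satisfies the three conditions of Definition \ref{d1}. Writing $M=m+v$ and $W=n+w$ with $m,n$ c\`adl\`ag local martingales vanishing at $0$ and $v,w$ continuous finite-variation processes vanishing at $0$ with $dv$, $dw$ carried by $H$, the first step is to apply the It\^o product rule
$$M_tW_t=\int_0^t M_{s-}\,dW_s+\int_0^t W_{s-}\,dM_s+[M,W]_t,$$
so that
$$M_tW_t-[M,W]_t=\int_0^t M_{s-}\,dW_s+\int_0^t W_{s-}\,dM_s.$$
The whole point is then to identify which parts of this expression are local martingales and which are finite-variation pieces carried by $H$.

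Next I would expand each stochastic integral along the decompositions of $M$ and $W$. Since $v$ and $w$ are continuous, $dW=dn+dw$ and $dM=dm+dv$, and the integrals split as
$$\int_0^t M_{s-}\,dW_s=\int_0^t M_{s-}\,dn_s+\int_0^t M_{s}\,dw_s,\qquad \int_0^t W_{s-}\,dM_s=\int_0^t W_{s-}\,dm_s+\int_0^t W_{s}\,dv_s.$$
The two integrals against $dn$ and $dm$ are stochastic integrals of locally bounded predictable integrands (the c\`agl\`ad processes $M_{-}$ and $W_{-}$) against local martingales, hence are themselves local martingales; their sum is the candidate local-martingale part $\widetilde{m}$, with $\widetilde{m}_0=0$. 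The two remaining integrals, $\int_0^t M_s\,dw_s$ and $\int_0^t W_s\,dv_s$, are continuous finite-variation processes starting at $0$; call their sum $\widetilde{v}$.

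The key step is to check that $d\widetilde{v}$ is carried by $H$, i.e.\ that $\int_0^t \1_{H^c}(s)\,d\widetilde{v}_s=0$. This follows because $dw$ is carried by $H$ (so $\int \1_{H^c}M_s\,dw_s=0$) and $dv$ is carried by $H$ (so $\int \1_{H^c}W_s\,dv_s=0$); the integrands $M_s$ and $W_s$ are bounded on compacts and cause no difficulty. Thus $\widetilde{v}$ satisfies condition (3), and together with $\widetilde{m}$ this exhibits $MW-[M,W]=\widetilde{m}+\widetilde{v}$ as an element of $\mathcal{M}(H)$.

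I expect the only genuinely delicate point to be the justification that the two stochastic integrals against $dm$ and $dn$ are bona fide local martingales, since the integrands $M_{-}$ and $W_{-}$ are only locally bounded rather than bounded; this is handled by a standard localization argument, stopping at the stopping times that reduce $m$ and $n$ and on which $M_{-},W_{-}$ are bounded. Everything else is routine bookkeeping with the product formula and the carrying property of $dv$ and $dw$; no new analytic tool beyond integration by parts and Lemma \ref{l1} is required.
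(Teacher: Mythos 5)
Your proof is correct and follows essentially the same route as the paper's: integration by parts to get $M_tW_t-[M,W]_t=\int_0^t M_{s-}\,dW_s+\int_0^t W_{s-}\,dM_s$, then identifying each integral as an element of $\mathcal{M}(H)$. The only difference is cosmetic -- the paper concludes by citing Lemma \ref{l1} (stochastic integrals of locally bounded predictable integrands against elements of $\mathcal{M}(H)$ stay in $\mathcal{M}(H)$) together with the vector-space property of Remark \ref{r1}, whereas you unfold the content of that lemma inline by splitting along the decompositions $M=m+v$, $W=n+w$.
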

\begin{proof}
Through integration by parts, we have:
$$M_{t}W_{t}=\int_{0}^{t}{M_{s-}dW_{s}}+\int_{0}^{t}{W_{s-}dM_{s}}+[ M,W]_{t}.$$
Hence,
$$M_{t}W_{t}-[ M,W]_{t}=\int_{0}^{t}{M_{s-}dW_{s}}+\int_{0}^{t}{W_{s-}dM_{s}}.$$
Then, we obtain the result from Remark \ref{r1} and Lemma \ref{l1}.
\end{proof}

In what follows, we derive a series of corollaries of Lemma \ref{l2} which show that the process $MW-[ M,W]_{\cdot}$ can be a local martingale under some assumptions. 
\begin{coro}\label{c1}
If $M$ and $W$ are processes  of the class $\mathcal{M}(H)$ which vanish on $H$, hence $MW-[ M,W]_{\cdot}$ is a local martingale.
\end{coro}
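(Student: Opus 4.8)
The plan is to build directly on the integration-by-parts identity already obtained in the proof of Lemma \ref{l2}, namely
$$M_{t}W_{t}-[M,W]_{t}=\int_{0}^{t}{M_{s-}\,dW_{s}}+\int_{0}^{t}{W_{s-}\,dM_{s}},$$
and to show that, once we add the hypothesis that $M$ and $W$ vanish on $H$, the finite-variation contributions to these two integrals disappear and only genuine martingale integrals survive. Following Definition \ref{d1}, I would write $M=m+v$ and $W=n+w$, where $m,n$ are c\`adl\`ag local martingales null at $0$ and $v,w$ are continuous finite-variation processes with $v_{0}=w_{0}=0$ whose differentials are carried by $H$. Splitting each stochastic integral into its martingale and its finite-variation parts gives
$$\int_{0}^{t}{M_{s-}\,dW_{s}}=\int_{0}^{t}{M_{s-}\,dn_{s}}+\int_{0}^{t}{M_{s-}\,dw_{s}},\qquad \int_{0}^{t}{W_{s-}\,dM_{s}}=\int_{0}^{t}{W_{s-}\,dm_{s}}+\int_{0}^{t}{W_{s-}\,dv_{s}}.$$

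The two integrals against $dn$ and $dm$ are local martingales: the left-limit processes $M_{-}$ and $W_{-}$ are locally bounded and predictable, so these are stochastic integrals of locally bounded predictable integrands driven by local martingales. The whole problem therefore reduces to proving that the two finite-variation cross terms $\int_{0}^{t}{M_{s-}\,dw_{s}}$ and $\int_{0}^{t}{W_{s-}\,dv_{s}}$ vanish identically.

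For $\int_{0}^{t}{M_{s-}\,dw_{s}}$ I would argue in two steps. First, because $v$ is continuous, $M=m+v$ has the same jumps as $m$, so $M_{s-}$ differs from $M_{s}$ only on the at most countable set of jump times of $m$; since $w$ is continuous, the measure $dw$ (equivalently its total variation) is atomless and assigns zero mass to any countable set, whence $\int_{0}^{t}{M_{s-}\,dw_{s}}=\int_{0}^{t}{M_{s}\,dw_{s}}$. Second, $dw$ is carried by $H$ while $M$ vanishes on $H$, so $M_{s}1_{H}(s)=0$ and the integral is $0$; the identical reasoning, exchanging the roles of the two processes, gives $\int_{0}^{t}{W_{s-}\,dv_{s}}=0$. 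Combining everything, $M_{t}W_{t}-[M,W]_{t}=\int_{0}^{t}{M_{s-}\,dn_{s}}+\int_{0}^{t}{W_{s-}\,dm_{s}}$ is a sum of two local martingales, hence a local martingale.

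The step I expect to require the most care is the passage from the predictable left limit $M_{s-}$ to the value $M_{s}$ inside the finite-variation integral: one must genuinely invoke the continuity of $v$ and $w$ (so that the integrating measures are atomless and the jump set is negligible) before replacing $M_{s-}$ by $M_{s}$ and only then using that $M$ and $W$ vanish on $H$. As a consistency check I would test the statement on $M=W=|D|$, where Tanaka's formula yields a decomposition with finite-variation part the local time $L$; there $|D|^{2}-[|D|,|D|]=D^{2}-\langle D,D\rangle$ is indeed a local martingale, matching the claim.
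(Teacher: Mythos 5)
Your proof is correct and follows essentially the same route as the paper: starting from the integration-by-parts identity of Lemma \ref{l2} and killing the finite-variation integrals because the integrands vanish on $H$ while $dv$, $dw$ are carried by $H$. The only difference is that you re-derive the content of Lemma \ref{l1}(2) inline (splitting into martingale and finite-variation parts, and carefully using the continuity of $v$ and $w$ to replace $M_{s-}$ by $M_{s}$), whereas the paper simply cites that lemma; your spelled-out version is, if anything, more careful on the left-limit point than the paper's own proof of Lemma \ref{l1}.
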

\begin{proof}
Let $M$ and $W$ be processes of the class $\mathcal{M}(H)$. According to Lemma \ref{l2}, $MW-[ M,W]_{\cdot}$ is an element of the class $\mathcal{M}(H)$. Moreover, we have:
$$M_{t}W_{t}-[ M,W]_{t}=\int_{0}^{t}{M_{s-}dW_{s}}+\int_{0}^{t}{W_{s-}dM_{s}}.$$
But, we know from Lemma \ref{l1} that $\int_{0}^{t}{M_{s-}dW_{s}}$ and $\int_{0}^{t}{W_{s-}dM_{s}}$ are local martingales because $M$ and $W$ vanish on $H$. This completes the proof.
\end{proof}

\begin{coro}\label{c2}
If $M$ is a local martingale vanishing on $H$ and $W$ is a process  of the class $\mathcal{M}(H)$. Hence, $MW-[ M,W]_{\cdot}$ is a local martingale.
\end{coro}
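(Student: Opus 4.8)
The strategy is to replay the proof of Corollary \ref{c1}, the only novelty being that $W$ is no longer assumed to vanish on $H$, so the two stochastic integrals produced by integration by parts will have to be treated by different arguments. As in Lemma \ref{l2}, I would start from
$$M_{t}W_{t}-[M,W]_{t}=\int_{0}^{t}{M_{s-}dW_{s}}+\int_{0}^{t}{W_{s-}dM_{s}},$$
so that it suffices to check that each integral on the right-hand side is a local martingale.

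For the first integral I would proceed as in Lemma \ref{l1}(2), writing $W=m_{W}+v_{W}$ with $m_{W}$ a local martingale and $v_{W}$ continuous with finite variation and $dv_{W}$ carried by $H$. The piece $\int_{0}^{t}{M_{s-}dm_{W,s}}$ is a local martingale, since $M_{s-}$ is locally bounded and predictable. The piece $\int_{0}^{t}{M_{s-}dv_{W,s}}$ vanishes identically: because $v_{W}$ is continuous, its measure charges no jump time of $M$, so $\int_{0}^{t}{M_{s-}dv_{W,s}}=\int_{0}^{t}{M_{s}dv_{W,s}}$, and this is $0$ as $dv_{W}$ is carried by $H$ while $M$ vanishes on $H$. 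Hence $\int_{0}^{\cdot}{M_{s-}dW_{s}}$ is a local martingale.

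The point that distinguishes this corollary from Corollary \ref{c1} is the second integral $\int_{0}^{\cdot}{W_{s-}dM_{s}}$, for which nullity of $W$ on $H$ is not available. Here I would instead use the standing hypothesis that $M$ is itself a local martingale: viewed as an element of $\mathcal{M}(H)$ it has zero finite-variation part, so by Lemma \ref{l1}(1) the integral of the locally bounded predictable process $W_{s-}$ against $M$ lies in $\mathcal{M}(H)$ with vanishing finite-variation component, i.e.\ it is a local martingale. The whole argument thus hinges on the asymmetry---nullity of the integrand on $H$ handles the $dW$-integral, while the local-martingale nature of the integrator handles the $dM$-integral---and there is no deeper obstacle. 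Adding the two local martingales and invoking Remark \ref{r1} yields that $MW-[M,W]_{\cdot}$ is a local martingale.
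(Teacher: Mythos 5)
Your proof is correct and follows essentially the same route as the paper: integration by parts gives $M_{t}W_{t}-[M,W]_{t}=\int_{0}^{t}M_{s-}\,dW_{s}+\int_{0}^{t}W_{s-}\,dM_{s}$, the $dW$-integral is a local martingale by the argument of Lemma \ref{l1}(2) since $M$ vanishes on $H$, and the $dM$-integral is a local martingale simply because $M$ is one. The only differences are cosmetic---you unpack Lemma \ref{l1}(2) instead of citing it, and the final appeal to Remark \ref{r1} is unnecessary since a sum of local martingales is a local martingale.
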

\begin{proof}
According to Lemma \ref{l2}, $MW-[ M,W]_{\cdot}$ is also an element of the class $\mathcal{M}(H)$ and $\forall t\geq0$,
$$M_{t}W_{t}-[ M,W]_{t}=\int_{0}^{t}{M_{s-}dW_{s}}+\int_{0}^{t}{W_{s-}dM_{s}}.$$
We can remark that $\int_{0}^{\cdot}{W_{s-}dM_{s}}$ is a local martingale. Moreover, we deduce from Lemma \ref{l1} that $\int_{0}^{\cdot}{M_{s-}dW_{s}}$ is also a local martingale since $M$ vanishes on $H$.
\end{proof}

\begin{coro}\label{c3}
For any process $M$ of the class $\mathcal{M}(H)$, the process $MD-[ M,D]_{\cdot}$ is a local martingale.
\end{coro}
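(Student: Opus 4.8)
The plan is to observe that the driving process $D$ is itself a local martingale which vanishes on $H$, and then to invoke Corollary \ref{c2} directly, with $D$ playing the role of the local martingale and $M$ playing the role of the $\mathcal{M}(H)$-process.

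First I would record that $D$ qualifies as an admissible ``local martingale vanishing on $H$'' in the sense of Corollary \ref{c2}. By our standing assumption $D$ is a continuous martingale, hence in particular a continuous local martingale; and since $H$ is by definition the zero set $H=\{t\geq 0 : D_{t}=0\}$ of $D$, the process $D$ vanishes identically on $H$. Thus both hypotheses of Corollary \ref{c2} are met under the substitution (local martingale) $\mapsto D$ and ($\mathcal{M}(H)$-process) $\mapsto M$. Applying Corollary \ref{c2} then yields that $DM-[ D,M]_{\cdot}$ is a local martingale, and it only remains to note the elementary symmetries $DM=MD$ and $[ D,M]_{\cdot}=[ M,D]_{\cdot}$, which identify this process with $MD-[ M,D]_{\cdot}$ and finish the proof.

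I do not expect any genuine obstacle: the statement is simply the specialization of Corollary \ref{c2} to the canonical choice $W=M$ and (local martingale) $=D$. Should one prefer a self-contained argument, one could instead expand $M_{t}D_{t}$ by integration by parts as $\int_{0}^{t}{M_{s-}dD_{s}}+\int_{0}^{t}{D_{s-}dM_{s}}+[ M,D]_{t}$, remark that $\int_{0}^{\cdot}{M_{s-}dD_{s}}$ is a local martingale because $D$ is, and that $\int_{0}^{\cdot}{D_{s-}dM_{s}}$ is a local martingale by Lemma \ref{l1} since $D_{s-}=D_{s}$ by continuity and $D$ vanishes on $H$; invoking Corollary \ref{c2} is, however, the most economical route.
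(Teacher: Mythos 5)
Your proposal is correct and coincides with the paper's own proof: both arguments simply note that $D$ is, by definition, a (local) martingale vanishing on its zero set $H$, and then apply Corollary \ref{c2} with $M\in\mathcal{M}(H)$. The symmetry remark $MD-[M,D]=DM-[D,M]$ and the alternative self-contained integration-by-parts argument are fine but add nothing the paper's route does not already cover.
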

\begin{proof}
It is enough to notice that $M$ is a process of the class $\mathcal{M}(H)$ and $D$ is by definition, a martingale which vanishes on $H$. Thus, we obtain the result from Corollary \ref{c2}.
\end{proof}

In next corollary, we show that the product of the processes of class $\mathcal{M}(H)$ with vanishing quadratic covariations is again a relative martingale and in particular under some assumptions, a local martingale.
\begin{coro}\label{c4}
Let $(X_{t}^{1})_{t\geq0},\cdots,(X_{t}^{n})_{t\geq0}$ be processes of the class $\mathcal{M}(H)$ such that $[ X^{i},X^{j}]=0$ for $i\neq j$. Hence, the following hold:
\begin{enumerate}
	\item $(\Pi_{i=1}^{n}{X^{i}_{t}})_{t\geq0}$ is also of class $\mathcal{M}(H)$.
	\item If $\forall i\in\{1,\cdots, n\}$, $X^{i}$ vanishes on $H$. Hence, $(\Pi_{i=1}^{n}{X^{i}_{t}})_{t\geq0}$ is a local martingale.
	\item If $\exists l\in\{1,\cdots, n\}$ such that $X^{l}$ is a local martingale null on $H$. Hence, $(\Pi_{i=1}^{n}{X^{i}_{t}})_{t\geq0}$ is a local martingale.
\end{enumerate}
\end{coro}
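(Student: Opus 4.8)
The plan is to prove Corollary \ref{c4} by induction on $n$, using Lemma \ref{l2} and its corollaries as the base tools, exactly in the spirit of how Corollaries \ref{c1}--\ref{c3} were derived. The key structural fact I would exploit is that, under the hypothesis $[X^i,X^j]=0$ for $i\neq j$, the quadratic covariation between a partial product and the next factor collapses to a manageable form, so that Lemma \ref{l2} can be applied repeatedly without accumulating unwanted bracket terms.

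First I would establish part (1). Let $P_{k}=\prod_{i=1}^{k}X^{i}$ and argue by induction that each $P_k\in\mathcal{M}(H)$. The base case $P_1=X^1\in\mathcal{M}(H)$ is immediate. For the inductive step, I would write $P_{k+1}=P_k X^{k+1}$ and invoke Lemma \ref{l2}, which gives $P_k X^{k+1}-[P_k,X^{k+1}]_{\cdot}\in\mathcal{M}(H)$ whenever $P_k,X^{k+1}\in\mathcal{M}(H)$. The crucial computation is that $[P_k,X^{k+1}]=0$: since $P_k$ is built from $X^1,\dots,X^k$ via products, its bracket with $X^{k+1}$ reduces through the bilinearity and the integration-by-parts expansion of $[\cdot,\cdot]$ to a sum of terms each containing a factor $[X^i,X^{k+1}]$ with $i\le k$, all of which vanish by hypothesis. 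Hence $[P_k,X^{k+1}]_{\cdot}=0$ and $P_{k+1}=P_k X^{k+1}-[P_k,X^{k+1}]_{\cdot}\in\mathcal{M}(H)$, closing the induction. I would state this bracket-vanishing claim carefully as a short lemma-within-the-proof, since it is the one nonroutine point.

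For parts (2) and (3), I would keep the same induction but upgrade the conclusion from ``relative martingale'' to ``local martingale'' by appealing to Corollaries \ref{c1} and \ref{c2} instead of the raw Lemma \ref{l2}. In part (2), every $X^i$ vanishes on $H$, and one checks that each partial product $P_k$ then also vanishes on $H$ (a product is zero wherever any factor is zero); applying Corollary \ref{c1} to the pair $(P_k,X^{k+1})$ together with $[P_k,X^{k+1}]=0$ yields that $P_{k+1}=P_kX^{k+1}$ is a local martingale. In part (3), the single factor $X^l$ is a local martingale null on $H$; here I would reorder the product so that $X^l$ is multiplied in last, form $Q=\prod_{i\neq l}X^i\in\mathcal{M}(H)$ by part (1), and then apply Corollary \ref{c2} to the pair $(X^l,Q)$, again using $[X^l,Q]=0$ to discard the bracket term and conclude $\prod_i X^i=X^l Q$ is a local martingale.

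The main obstacle, and the only step requiring genuine care, is the vanishing of the cross bracket $[P_k,X^{k+1}]$. One must justify that the quadratic covariation of a product of semimartingales with another semimartingale is a linear combination of the pairwise brackets $[X^i,X^{k+1}]$; this follows from integration by parts applied iteratively, since $d(P_{k-1}X^k)=P_{k-1}\,dX^k+X^k\,dP_{k-1}+d[P_{k-1},X^k]$ and covariation reads off only the continuous-martingale (and, in the càdlàg setting, the jump) contributions, each of which is driven by some $X^i$. Once this is accepted, the three parts follow uniformly by induction with no further computation.
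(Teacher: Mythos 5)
Your proposal is correct and follows essentially the same route as the paper's own proof: induction on the number of factors using Lemma \ref{l2} for part (1), the same argument with Corollary \ref{c1} in place of Lemma \ref{l2} for part (2), and factoring the product as $X^{l}\times\prod_{i\neq l}X^{i}$ and applying Corollary \ref{c2} for part (3). The only difference is that you explicitly justify the vanishing of the cross bracket $[P_{k},X^{k+1}]$ via iterated integration by parts, a point the paper's proof leaves implicit; this is a refinement of the same argument rather than a different approach.
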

\begin{proof}
\begin{enumerate}
	\item Let us first take $n=2$. Through Lemma \ref{l2}, we obtain that $X^{1}X^{2}-[X^{1},X^{2}]$ is a process of the class $\mathcal{M}(H)$. That is, $X^{1}X^{2}\in\mathcal{M}(H)$ since $[X^{1},X^{2}]=0$. Hence, we obtain by induction that for any family $(X_{t}^{1})_{t\geq0},\cdots,(X_{t}^{n})_{t\geq0}$ of the class $\mathcal{M}(H)$ such that $[ X^{i},X^{j}]=0$ for $i\neq j$, the process $(\Pi_{i=1}^{n}{X^{i}_{t}})_{t\geq0}$ is also of class $\mathcal{M}(H)$.
	\item We proceed in the same way as 1) by using Corollary \ref{c1} instead of Lemma \ref{l2} to show that $(\Pi_{i=1}^{n}{X^{i}_{t}})_{t\geq0}$ is a local martingale.
	\item Now, we assume that there exists $l\in\{1,\cdots,n\}$ such that $X^{l}$ is a local martingale null on $H$. Remark that:
	$$\Pi_{i=1}^{n}{X^{i}_{t}}=X^{l}\times\Pi_{i=1,i\neq l}^{n}{X^{i}_{t}}.$$
	But, we can see from 1) that $\Pi_{i=1,i\neq l}^{n}{X^{i}_{t}}\in\mathcal{M}(H)$. Hence, we obtain the result by using Corollary \ref{c2}.
\end{enumerate}
\end{proof}

Now, we shall derive the result from which Example \ref{ex2} follows.
\begin{lem}\label{l3}
Let $M$ and $W$ be processes of the class $\mathcal{M}(H)$ such that $W$ is continuous and $\{t\geq0:W_{t}=0\}\subset H$. Hence, next processes are elements of the class $\mathcal{M}(H)$:
\begin{enumerate}
	\item $X^{1}=\min{(M,M-W)}$;
	\item $X^{2}=\min{(M,M+W)}$;
	\item $X^{3}=\min{(M-W,M+W)}$;
	\item $X^{4}=\max{(M,M-W)}$;
  \item $X^{5}=\max{(M,M+W)}$;
	\item $X^{6}=\max{(M-W,M+W)}$.
\end{enumerate}
\end{lem}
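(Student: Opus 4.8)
The plan is to reduce all six cases to a single identity and then invoke the algebraic structure of $\mathcal{M}(H)$ already established. The key observation is the elementary pointwise identities
$$\min(a,b)=\tfrac{1}{2}\bigl(a+b-|a-b|\bigr),\qquad \max(a,b)=\tfrac{1}{2}\bigl(a+b+|a-b|\bigr).$$
Applying these to the six combinations, the difference $a-b$ is in each case $\pm W$ or $\pm 2W$ (for instance $X^{1}=\min(M,M-W)$ gives $a-b=W$, while $X^{3}=\min(M-W,M+W)$ gives $a-b=-2W$), so that every $X^{j}$ can be written as an $\mathcal{F}_{t}$-linear combination of $M$, $W$, and $|W|$. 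Since Remark \ref{r1} asserts that $\mathcal{M}(H)$ is a vector space and $M,W\in\mathcal{M}(H)$ by hypothesis, the whole problem collapses to showing that $|W|\in\mathcal{M}(H)$.

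First I would treat the central claim that $|W|\in\mathcal{M}(H)$. Since $W$ is a continuous semimartingale of the class $\mathcal{M}(H)$, write $W=m+v$ with $m$ a continuous local martingale, $m_{0}=0$, and $dv$ carried by $H$; note $W_{0}=0$ so $|W_{0}|=0$. By the Tanaka formula,
$$|W_{t}|=\int_{0}^{t}{\mathrm{sgn}(W_{s})\,dW_{s}}+L_{t}^{0}(W),$$
where $L^{0}(W)$ is the local time of $W$ at $0$. The stochastic integral $\int_{0}^{\cdot}\mathrm{sgn}(W_{s})\,dW_{s}$ is, by Lemma \ref{l1}(1) with the locally bounded predictable integrand $h_{s}=\mathrm{sgn}(W_{s})$, itself an element of $\mathcal{M}(H)$, since $\mathrm{sgn}(W)$ is predictable and bounded. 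It remains to handle the local time term: $L^{0}(W)$ is a continuous, increasing (hence finite-variation) adapted process with $L_{0}^{0}(W)=0$, and its measure $dL^{0}(W)$ is carried by $\{t\geq 0: W_{t}=0\}$, which by hypothesis is contained in $H$; thus $dL^{0}(W)$ is carried by $H$ as well. Consequently $L^{0}(W)$ is a continuous finite-variation process whose differential is supported on $H$, so it contributes only to the $v$-part of the decomposition, and $|W|=\int_{0}^{\cdot}\mathrm{sgn}(W_{s})\,dW_{s}+L^{0}(W)\in\mathcal{M}(H)$.

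Having established $|W|\in\mathcal{M}(H)$, I would finish by assembling each $X^{j}$ explicitly. For example,
$$X^{1}=M-\tfrac{1}{2}W-\tfrac{1}{2}|W|,\qquad X^{4}=M-\tfrac{1}{2}W+\tfrac{1}{2}|W|,$$
and similarly for the remaining four, each being a fixed linear combination of the three members $M,W,|W|$ of $\mathcal{M}(H)$; Remark \ref{r1} then yields $X^{j}\in\mathcal{M}(H)$ for all $j$. The main obstacle, and the only genuine content of the argument, is the claim that the local time $dL^{0}(W)$ is carried by $H$: this is where the hypothesis $\{t\geq 0:W_{t}=0\}\subset H$ is indispensable, since $dL^{0}(W)$ is supported precisely on the zero set of $W$, and without the inclusion into $H$ the finite-variation part would fail the carrying condition in Definition \ref{d1}. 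Everything else is a routine application of the vector-space structure and of Lemma \ref{l1}.
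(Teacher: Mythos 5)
Your proposal is correct and follows essentially the same route as the paper's own proof: reduce each $X^{j}$ via the identities $\min(x,y)=\tfrac{1}{2}(x+y-|x-y|)$ and $\max(x,y)=\tfrac{1}{2}(x+y+|x-y|)$ to a linear combination of $M$, $W$, $|W|$, then show $|W|\in\mathcal{M}(H)$ by Tanaka's formula, using Lemma \ref{l1} for the stochastic integral term and the inclusion $\{t\geq0:W_{t}=0\}\subset H$ for the local time term, and conclude with the vector-space structure of Remark \ref{r1}. No gaps; your explicit emphasis on where the hypothesis $\{t\geq0:W_{t}=0\}\subset H$ is needed matches the paper's reasoning exactly.
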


\begin{proof}
Firstly, we obtain by using formulas $\min(x,y)=\frac{x+y-|x-y|}{2}$, $\max(x,y)=\frac{x+y+|x-y|}{2}$ that:
\begin{enumerate}
	\item $X^{1}_{t}=M_{t}-\frac{1}{2}W_{t}-\frac{1}{2}|W_{t}|$;
	\item $X^{2}=M_{t}+\frac{1}{2}W_{t}-\frac{1}{2}|W_{t}|$;
	\item $X^{3}=M_{t}-|W_{t}|$;
	\item $X^{4}=M_{t}-\frac{1}{2}W_{t}+\frac{1}{2}|W_{t}|$;
  \item $X^{5}=M_{t}+\frac{1}{2}W_{t}+\frac{1}{2}|W_{t}|$;
	\item $X^{6}=M_{t}+|W_{t}|$.
\end{enumerate}
Moreover, we have from Tanaka's formula that
$$|W_{t}|=\int_{0}^{t}{sign(W_{s})dW_{s}}+L_{t}^{0}(W).$$
We can see that $dL_{\cdot}^{0}(W)$ is carried by $H$ because $\{t\geq0:W_{t}=0\}\subset H$. Hence, $|W|\in\mathcal{M}(H)$ since according to Lemma \ref{l1}, $\int_{0}^{\cdot}{sign(W_{s})dW_{s}}\in\mathcal{M}(H)$. Consequently, we obtain from Remark \ref{r1} that the above mentioned processes are elements of the class $\mathcal{M}(H)$.
\end{proof}

Now, we shall derive some properties using the balayage formulas. Hence, we start by the predictable case.

\begin{lem}\label{l4}
Let $M$ be a continuous process of the class $\mathcal{M}(H)$, and let $g_{t}=\sup\{s\leq t:M_{s}=0\}$. Then, for any locally bounded predictable process $k$, $k_{g_{\cdot}}M$ is also an element of class $\mathcal{M}(H)$.
\end{lem}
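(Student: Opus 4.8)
The plan is to apply the predictable balayage formula (the first Proposition in the balayage subsection) to the continuous semimartingale $M$, using the fact that the zero set of $M$ coincides with the set governing $g_{\cdot}$. First I would write $M=m+v$ according to the decomposition of the class $\mathcal{M}(H)$, and recall that $g_{t}=\sup\{s\leq t:M_{s}=0\}$ so that $dR$-type remainder terms are avoided entirely in the predictable case: the balayage formula for a continuous semimartingale $M$ with a locally bounded predictable process $k$ reads
$$k_{g_{t}}M_{t}=k_{g_{0}}M_{0}+\int_{0}^{t}{k_{g_{s}}dM_{s}},$$
with no extra finite-variation correction term. The whole point is that this identity already expresses $k_{g_{\cdot}}M$ as a stochastic integral against $M$.

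Next I would invoke Lemma \ref{l1}. The integrand $s\mapsto k_{g_{s}}$ is itself a locally bounded predictable process (it is obtained from the locally bounded predictable $k$ by the predictable time-change $s\mapsto g_{s}$, and $g_{\cdot}$ is adapted and left-continuous enough to keep predictability). Since $M\in\mathcal{M}(H)$, part 1 of Lemma \ref{l1} tells us that $\int_{0}^{\cdot}{k_{g_{s}}dM_{s}}$ is again an element of $\mathcal{M}(H)$. The constant term $k_{g_{0}}M_{0}$ contributes nothing problematic: if $M_{0}=0$ it vanishes, and in any case it is an $\mathcal{F}_{0}$-measurable constant that can be absorbed without disturbing the decomposition (or one notes $M_{0}=0$ under the standing convention for the class). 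Combining the balayage identity with this membership yields $k_{g_{\cdot}}M\in\mathcal{M}(H)$ directly.

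I expect the main obstacle to be a careful justification that $k_{g_{\cdot}}$ is genuinely locally bounded and predictable, so that Lemma \ref{l1} legitimately applies. One must check that composing $k$ with the left-continuous adapted process $g_{\cdot}$ preserves predictability (this is standard, since $g_{\cdot}$ is increasing, left-continuous and adapted, hence predictable, and $k$ predictable composed with a predictable time yields a predictable process) and that local boundedness of $k$ transfers to $k_{g_{\cdot}}$ (immediate, as $k_{g_{s}}$ takes values in the range of $k$). Once this measurability point is settled, the argument is essentially a one-line consequence of the predictable balayage formula together with the stability of $\mathcal{M}(H)$ under stochastic integration established in Lemma \ref{l1}; no further estimates are needed.
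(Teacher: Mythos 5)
Your proposal is correct and follows essentially the same route as the paper: apply the predictable balayage formula to write $k_{g_{t}}M_{t}=k_{g_{0}}M_{0}+\int_{0}^{t}k_{g_{s}}\,dM_{s}=\int_{0}^{t}k_{g_{s}}\,dM_{s}$ (using $M_{0}=m_{0}+v_{0}=0$), then invoke part 1 of Lemma \ref{l1} to conclude that this stochastic integral lies in $\mathcal{M}(H)$. Your additional care in checking that $k_{g_{\cdot}}$ remains locally bounded and predictable is a point the paper leaves implicit, but it does not change the argument.
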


\begin{proof}
By applying the balayage formula, we obtain the following:
$$k_{g_{t}}M_{t}=k_{g_{0}}M_{0}+\int_{0}^{t}{k_{g_{s}}dM_{s}}=\int_{0}^{t}{k_{g_{s}}dM_{s}}.$$
But, we know from Lemma \ref{l1} that $\int_{0}^{\cdot}{k_{g_{s}}dM_{s}}\in\mathcal{M}(H)$. This completes the proof.
\end{proof}

The following Corollary present us a situation under which relative martingales are also processes of the class $(\Sigma)$.
\begin{coro}\label{c5}
Any non-negative  process $M=m+v$ of the class $\mathcal{M}(H)$ which vanishes on $H$, is an element of the class $(\Sigma)$.
\end{coro}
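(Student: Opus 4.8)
The plan is to show that the decomposition $M=m+v$ furnished by the hypothesis $M\in\mathcal{M}(H)$ is already a decomposition of the form required by the definition of the class $(\Sigma)$. The structural requirements on the two components are inherited for free, so the entire content of the statement is to check that the finite-variation measure $dv$ is carried by $\{t\geq0:M_{t}=0\}$ and not merely by $H$.

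First I would record what Definition \ref{d1} supplies directly: $m$ is a c\`adl\`ag local martingale with $m_{0}=0$, and $v$ is a (continuous, adapted) finite-variation process with $v_{0}=0$. These play exactly the roles of the local-martingale part and of the finite-variation part $A$ in the definition of $(\Sigma)$, so nothing must be re-established about them; the only point left to verify is the carrying set of $dv$.

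The key step is a carrier-enlargement argument driven by the vanishing condition. Since $M$ vanishes on $H$, every point of $H$ is a zero of $M$, i.e.\ $H\subseteq\{t\geq0:M_{t}=0\}$, equivalently $\{t\geq0:M_{t}\neq0\}\subseteq H^{c}$. Working with the total-variation measure $|dv|$, condition 3 of Definition \ref{d1} gives $\int 1_{H^{c}}(s)\,|dv|_{s}=0$, whence $\int 1_{\{M\neq0\}}(s)\,|dv|_{s}\leq\int 1_{H^{c}}(s)\,|dv|_{s}=0$. Thus $dv$ does not charge $\{M\neq0\}$ and is therefore carried by $\{t\geq0:M_{t}=0\}$.

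Combining these observations, $M=m+v$ exhibits $M$ as a c\`adl\`ag local martingale null at $0$ plus a finite-variation process whose measure is carried by $\{M=0\}$, which is precisely membership in the class $(\Sigma)$. I do not anticipate a genuine obstacle: the argument reduces to the set inclusion $H\subseteq\{M=0\}$, and the only care needed is to phrase the carrier enlargement at the level of $|dv|$ so that the carrying set may be replaced by the larger set $\{M=0\}$. The non-negativity hypothesis is not used for this inclusion under the definition of $(\Sigma)$ recalled above; it serves to place $M$ among the non-negative processes for which the class $(\Sigma)$ is classically considered.
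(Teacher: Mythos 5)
Your proof is correct with respect to the definition of the class $(\Sigma)$ stated in the paper's introduction, but it takes a genuinely different and much more elementary route than the paper's own proof. You argue by direct verification: vanishing on $H$ gives the inclusion $H\subseteq\{t\geq0:M_{t}=0\}$, and since condition 3 of Definition \ref{d1} --- read, as intended, as the process identity $\int_{0}^{t}{1_{H^{c}}(s)dv_{s}}=0$ for all $t$, i.e.\ ``$dv$ is carried by $H$'' --- forces $1_{H^{c}}\,|dv|\equiv0$, the measure $dv$ is a fortiori carried by the larger set $\{M=0\}$, so the given pair $(m,v)$ already witnesses membership in $(\Sigma)$. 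The paper instead uses the balayage formula (Lemma \ref{l4}) to obtain $f(v_{\gamma_{t}})M_{t}=\int_{0}^{t}{f(v_{\gamma_{s}})dM_{s}}$ for every locally bounded Borel function $f$, the identity $v_{\gamma_{t}}=v_{t}$, and hence that $f(v_{t})M_{t}-\int_{0}^{t}{f(v_{s})dv_{s}}$ is a local martingale, before concluding via the characterization theorem of Nikeghbali (Theorem 2.4 of \cite{nik}). The difference matters because of which definition of $(\Sigma)$ one targets: in \cite{nik} the finite-variation part is required to be \emph{increasing} (so that the process is a non-negative submartingale), and under that stricter definition your carrier-inclusion argument alone is incomplete --- you would additionally have to show that $v$ is increasing, which is precisely where non-negativity enters (for continuous $M$, Tanaka's formula and the occupation density formula give $v_{t}=\int_{0}^{t}{1_{\{M_{s}=0\}}dM_{s}}=\frac{1}{2}L_{t}^{0}(M)\geq0$). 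This is presumably why the paper routes through Nikeghbali's characterization and why it retains the non-negativity hypothesis, which, as you correctly observe, plays no role under the introduction's literal definition. In short: your argument is complete and simpler for the definition the paper states, while the paper's heavier argument is aimed at the classical (submartingale) formulation of $(\Sigma)$, for which your proof would need the extra monotonicity step.
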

\begin{proof}
Since $M$ vanishes on $H$, we obtain from Lemma \ref{l3} that for any locally bounded borel function $f$, $(f(v_{\gamma_{t}})M_{t}:t\geq0)\in\mathcal{M}(H)$, where $\gamma_{t}=\sup\{s\leq t:s\in H\}$. In addition, we have from balayage formula's that
$$f(v_{\gamma_{t}})M_{t}=\int_{0}^{t}{f(v_{\gamma_{s}})dM_{s}}.$$
But, $v_{\gamma_{t}}=v_{t}$ since $dv$ is carried by $H$. 
Then, 
$$f(v_{t})M_{t}=\int_{0}^{t}{f(v_{s})dm_{s}}+\int_{0}^{t}{f(v_{s})dv_{s}}.$$
Therefore, the process $\left(f(v_{t})M_{t}-\int_{0}^{t}{f(v_{s})dv_{s}}:t\geq0\right)$ is a local martingale. Consequently, we obtain from Theorem 2.4 of \cite{nik} that $M\in(\Sigma)$. This completes the Proof.
\end{proof}

\begin{rem}
In fact, all continuous relative martingales of the class $\mathcal{M}(H)$ which vanish on $H$, are processes of the class $(\Sigma)$. Indeed, it suffices to apply the above corollary to $|M|$ and to recall that $|M|\in(\Sigma)\Leftrightarrow M\in(\Sigma)$.
\end{rem}

Now, we shall use the balayage formula in progressive case to construct processes of the class $(\Sigma)$ from relative martingales.
\begin{lem}\label{l5}
Let $M$ be a continuous process of class $\mathcal{M}(H)$, and let $g_{t}=\sup\{s\leq t:M_{s}=0\}$. Then, for any càdlàg bounded progressive process $k$ which vanishes on $H$, $k_{g_{\cdot}}M$ is an element of class $(\Sigma)$.
\end{lem}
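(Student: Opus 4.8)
The plan is to read off a candidate $(\Sigma)$-decomposition of $k_{g_{\cdot}}M$ from the progressive balayage formula and then verify that its finite-variation part is carried by the zero set of $k_{g_{\cdot}}M$. First I would apply Proposition \ref{balpro} with $Y=M$, so that $\gamma'_{t}=g_{t}$, to the bounded progressive process $k$. Since $M=m+v$ with $m_{0}=v_{0}=0$ we have $M_{0}=0$, so the boundary term $k_{0}M_{0}$ drops and the formula becomes
$$k_{g_{t}}M_{t}=\int_{0}^{t}{^{p}k_{g_{s}}}\,dM_{s}+R_{t}=\int_{0}^{t}{^{p}k_{g_{s}}}\,dm_{s}+\int_{0}^{t}{^{p}k_{g_{s}}}\,dv_{s}+R_{t},$$
where ${^{p}k}$ is the predictable projection of $k$ and $R$ is a continuous finite-variation process with $dR$ carried by $\{M=0\}$. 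The first integral is a stochastic integral of a locally bounded predictable process against the local martingale $m$, hence a local martingale, and this is my candidate martingale part; the candidate finite-variation part is $A=\int_{0}^{\cdot}{^{p}k_{g_{s}}}\,dv_{s}+R$.

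Next I would check that $dA$ is carried by $\{k_{g_{\cdot}}M=0\}$. The $R$-contribution is immediate, since $dR$ is carried by $\{M=0\}$ and $\{M=0\}\subseteq\{k_{g_{\cdot}}M=0\}$. For the $dv$-term I would first record the useful fact that ${^{p}k}$ vanishes on $H$: because $H=\{D=0\}$ is a closed predictable set and $k1_{H}=0$, one has ${^{p}k}\,1_{H}={^{p}(k1_{H})}=0$. This disposes of the part of the support of $dv$ lying in $H\cap\{M=0\}$, where $g_{s}=s\in H$ and therefore ${^{p}k_{g_{s}}}={^{p}k_{s}}=0$ (and which in any case lies in $\{k_{g_{\cdot}}M=0\}$).

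The remaining contribution of $dv$ on $H\cap\{M\neq0\}$ is where I expect the main obstacle. On this set a time $s$ lies in the interior of an excursion $(a,b)$ of $M$, so that $g_{s}=a$ is a zero of $M$ but not necessarily a point of $H$; consequently the vanishing of ${^{p}k}$ on $H$ does not force ${^{p}k_{g_{s}}}=0$ there, and $dv$ genuinely charges such times. This is already visible for $X^{\delta}=\sqrt{1-\delta^{2}}B+\delta|D|$, where $dv=\delta\,dL^{0}(D)$ is carried by $H$ while $M\neq0$ holds $dv$-almost everywhere on $H$, so the term $\int_{0}^{\cdot}{^{p}k_{g_{s}}}\,dv_{s}$ need not be carried by $\{k_{g_{\cdot}}M=0\}$. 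To close the argument one therefore needs to show that the measure $1_{\{M\neq0\}}\,{^{p}k_{g_{s}}}\,dv_{s}$ is nevertheless carried by $\{k_{g_{\cdot}}M=0\}$, presumably by exploiting that $k_{g_{\cdot}}$ is constant, equal to $k_{a}$, along each excursion $(a,b)$ together with the vanishing of $k$ at the relevant excursion endpoints. If $M$ additionally vanishes on $H$ the difficulty evaporates, since then $H\cap\{M\neq0\}=\emptyset$ and $dv$ is automatically carried by $\{M=0\}$, recovering the situation of Corollary \ref{c5}. In either case, once the support condition on $A$ is secured, I would conclude $k_{g_{\cdot}}M\in(\Sigma)$ by the characterisation in Theorem 2.4 of \cite{nik}, exactly as in the proof of Corollary \ref{c5}.
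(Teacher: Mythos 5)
Your handling of the balayage decomposition, of the $R$-term, and of the $dv$-mass on $H\cap\{M=0\}$ is correct, but the step you leave open is a genuine gap --- and in fact it cannot be closed, because the obstruction you isolate on $H\cap\{M\neq0\}$ is fatal: the lemma as stated is false. Take $\beta$ and $B$ independent Brownian motions, $D=\beta_{\cdot\wedge\tau}$ with $\tau=\inf\{t:|\beta_{t}|=1\}$ (so the paper's standing assumption $0<\gamma<\infty$ holds), $H=\{D=0\}$, $L=L^{0}(D)$, and set
\[
M_{t}=B_{t}e^{L_{t}}=\int_{0}^{t}e^{L_{s}}dB_{s}+\int_{0}^{t}B_{s}e^{L_{s}}dL_{s}\in\mathcal{M}(H),\qquad k_{t}=\min(|D_{t}|,1).
\]
Then $k$ is continuous, bounded, progressive (indeed predictable) and vanishes on $H$, and $\{M=0\}=\{B=0\}$. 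Since $(\beta,B)$ is a planar Brownian motion, $D$ and $B$ have no common zero in $(0,\infty)$ a.s.; hence for $dL$-a.e.\ $s$ one has $M_{s}\neq0$, while $g_{s}$ is a zero of $B$ and not of $D$, so $k_{g_{s}}>0$. Because $k$ is predictable, every reading of the integrand in Proposition \ref{balpro} collapses to $k_{g_{s}}$ itself, and the (predictable) balayage formula gives
\[
k_{g_{t}}M_{t}=\int_{0}^{t}k_{g_{s}}e^{L_{s}}dB_{s}+\int_{0}^{t}k_{g_{s}}B_{s}e^{L_{s}}dL_{s},
\]
whose finite-variation part is nontrivial and charges only $\{k_{g_{\cdot}}M\neq0\}$. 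Since $k_{g_{\cdot}}M$ is continuous, membership in $(\Sigma)$ would force this canonical finite-variation part to be carried by the zero set: any competing decomposition with a c\`adl\`ag martingale part differs from the canonical one by a finite-variation local martingale $\Lambda$, and $\int 1_{\{k_{g_{\cdot}}M\neq0\}}d\Lambda$ is then a predictable finite-variation local martingale null at $0$, hence identically zero. So $k_{g_{\cdot}}M\notin(\Sigma)$. This also shows why your proposed repair cannot work: the left endpoints $g_{s}$ of the excursions of $M$ are zeros of $M$, not points of $H$, so neither $k$ nor ${}^{p}k$ has any reason to vanish there --- exactly what the example exploits.

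For the comparison you were asked to make: the paper's own proof does not close this step either; it bypasses it by asserting that, $k$ being c\`adl\`ag, ${}^{p}(k_{g_{\cdot}})_{s}=k_{s-}$, which silently turns the integrand $k_{g_{s}}$ into $k_{s}$, after which Lemma \ref{l1} kills the $dv$-integral because $k$ vanishes on $H$. That identity is incorrect: it conflates the value of $k$ at the last zero of $M$ with its value at $s$, and in the example above (where the integrand genuinely is $k_{g_{s}}$, since $k$ is predictable) the two differ on every excursion of $M$. So the step you declined to take is precisely the flawed step of the paper, and your instinct was sound. Your closing remark is also the correct resolution: if one adds the hypothesis that $M$ vanishes on $H$ (equivalently $H\subseteq\{M=0\}$), then $dv$ is automatically carried by $\{M=0\}\subseteq\{k_{g_{\cdot}}M=0\}$, the whole finite-variation part $\int_{0}^{\cdot}{}^{p}k_{g_{s}}dv_{s}+R$ is supported in the zero set, and $k_{g_{\cdot}}M\in(\Sigma)$ follows directly from the definition of the class (no appeal to Theorem 2.4 of \cite{nik} is needed, and in that case neither the c\`adl\`ag assumption nor the vanishing of $k$ on $H$ plays any role). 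Under that extra hypothesis your argument is complete; without it, no argument can succeed.
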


\begin{proof}
The balayage formula in progressive case through that $\forall t\geq0$,
$$k_{g_{t}}M_{t}=\int_{0}^{t}{^{p}(k_{g_{s}})dM_{s}}+R_{t},$$
where ${^{p}(k_{g_{s}})}$ is the predictable projection of $k_{g_{s}}$ and $R$ is a continuous process with finite variations such $dR$ is carried by $\{t\geq0:M_{t}=0\}$. Since $k$ is càdlàg, we have $^{p}(k_{g_{s}})=k_{s-}$. Hence, we obtain:
$$k_{g_{t}}M_{t}=\int_{0}^{t}{k_{s-}dM_{s}}+R_{t}$$
 Which implies the following:
$$k_{g_{t}}M_{t}=\int_{0}^{t}{k_{s}dM_{s}}+R_{t}$$
because $M$ is continuous. But, we have from Lemma \ref{l1} that $\int_{0}^{\cdot}{k_{s}dM_{s}}$ is a local martingale because $h$ vanishes on $H$. Consequently, the result holds. 
\end{proof}

\subsection{Relationship with the Azéma-Yor relative martingales}
Now, we shall state some relationship between the classes $\mathcal{M}(H)$ and $\mathcal{R}(H)$. More precisely, we derive some results which permit to decompose a process $M$ of the class $\mathcal{M}(H)$ as: 
\begin{equation}\label{e}
	M=M^{1}+M^{2},
\end{equation}
 where $M^{1}\in\mathcal{R}(H)$ and $M^{2}\in\mathcal{M}(H)$. Hence, we start by the following proposition:

\begin{prop}\label{p1}
Let $M$ be a process of the class $\mathcal{M}(H)$ such that its martingale part is uniformly integrable and $\langle M,D\rangle=0$. Hence, the process $(M_{t}-M_{\gamma_{t}}:t\geq0)$ is a relative martingale of the class $\mathcal{R}(H)$. 
\end{prop}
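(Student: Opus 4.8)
The plan is to first reduce the statement to a claim about the local martingale part $m$ alone, then to check that the candidate process vanishes on $H$, and finally to pass to the enlarged filtration and invoke the quotient theorem (Theorem \ref{qot}) to recognize the process as an element of $\mathcal{R}(H)$.

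First I would show that $M_{t}-M_{\gamma_{t}}=m_{t}-m_{\gamma_{t}}$. Since $dv$ is carried by $H$, the process $v$ is constant on every excursion interval of $D$ (where $D\neq0$, hence off $H$); as the interval $]\gamma_{t},t]$ is contained in such an excursion interval, or is empty when $t\in H$, we get $v_{t}=v_{\gamma_{t}}$, so that $M_{t}-M_{\gamma_{t}}=(m_{t}+v_{t})-(m_{\gamma_{t}}+v_{\gamma_{t}})=m_{t}-m_{\gamma_{t}}$. This is exactly the mechanism already used in the proof of Corollary \ref{c5}. I also record that $\langle M,D\rangle=\langle m,D\rangle=0$, since the continuous finite-variation process $v$ contributes nothing to the bracket. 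Next, writing $N:=M-M_{\gamma_{\cdot}}$, the process $N$ vanishes on $H$: if $t\in H$ then $D_{t}=0$, so $\gamma_{t}=t$ and $N_{t}=M_{t}-M_{t}=0$.

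It then remains to produce the relative-martingale decomposition, and here I would \emph{not} try to exhibit the martingale and finite-variation parts by hand: the case $m=B$ with $D$ an independent Brownian motion shows that $m_{\gamma_{\cdot}}$ is a genuine infinite-variation, purely discontinuous process (its jumps $m_{d_{n}}-m_{g_{n}}$ at the ends of excursions satisfy $\sum|m_{d_{n}}-m_{g_{n}}|=\infty$ a.s.), so the splitting is not transparent at the level of $(\mathcal{F}_{t})_{t\geq0}$. Instead I would exploit the enlargement machinery. For $t>0$ one has $\gamma_{\gamma+t}=\gamma$, whence $N_{\gamma+t}=M_{\gamma+t}-M_{\gamma}$; by Theorem \ref{gir}, and because the drift term $\int_{\gamma}^{\gamma+t}d\langle M,D\rangle_{s}/D_{s}$ vanishes under the hypothesis $\langle M,D\rangle=0$, the process $t\mapsto N_{\gamma+t}=M_{\gamma+t}-M_{\gamma}$ is a $(\Qv,(\mathcal{G}_{\gamma+t})_{t\geq0})$-local martingale. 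Since $N$ vanishes on $H$ and $N_{\gamma+t}=M_{\gamma+t}-M_{\gamma}$, Proposition \ref{rho} identifies $N$ as $\rho(V)$ with $V_{t}=M_{\gamma+t}-M_{\gamma}$. Setting $\chi_{t}=N_{\gamma+t}/|D_{\gamma+t}|$, and using that $\rho$ preserves products together with $\rho(|D_{\gamma+\cdot}|)=|D|$, one gets $N=\rho(V)=|D|\,\rho(\chi_{\cdot})$, which is precisely the form delivered by the reciprocal part of Theorem \ref{qot}.

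The crux, and the step I expect to be the main obstacle, is to upgrade the $\Qv$-local-martingale property of $N_{\gamma+\cdot}$ into the statement that the quotient $\chi_{t}=(M_{\gamma+t}-M_{\gamma})/|D_{\gamma+t}|$ is a genuine uniformly integrable $(\Qv,(\mathcal{G}_{\gamma+t})_{t\geq0})$-martingale. This requires controlling integrability near the zeros of $D$, where the denominator degenerates, and using the $h$-transform structure of $\Qv=\frac{|D_\infty|}{\E(|D_\infty|)}\P$, under which division by $|D|$ turns relative martingales into martingales; the orthogonality $\langle M,D\rangle=0$ is exactly what kills the cross terms so that $\chi$ carries no drift, while the uniform integrability is inherited from the assumption that the martingale part $m$ is uniformly integrable. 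Once $\chi$ is shown to be a $\Qv$-uniformly-integrable martingale, the reciprocal statement of the quotient theorem gives that $N=|D|\,\rho(\chi_{\cdot})$ belongs to $\mathcal{R}(H)$, which is the desired conclusion.
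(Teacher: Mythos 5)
Your setup (the reduction $M_{t}-M_{\gamma_{t}}=m_{t}-m_{\gamma_{t}}$, the observation that $N=M-M_{\gamma_{\cdot}}$ vanishes on $H$, and the passage to the enlarged filtration via Theorem \ref{gir}) is sound, but the argument has a genuine gap exactly where you flag ``the crux'': you never establish that $\chi_{t}=(M_{\gamma+t}-M_{\gamma})/|D_{\gamma+t}|$ is a uniformly integrable $(\Qv,(\mathcal{G}_{\gamma+t})_{t\geq0})$-martingale, and nothing in your outline supplies this. Theorem \ref{gir} only yields that $N_{\gamma+\cdot}$ is a $\Qv$-\emph{local} martingale; dividing by $|D_{\gamma+t}|$, which is arbitrarily small for $t$ close to $0$ (just after the last zero $\gamma$), can destroy integrability, and the assertion that uniform integrability of $\chi$ is ``inherited'' from that of $m$ is precisely the claim that needs proof: the hypotheses $\langle M,D\rangle=0$ and $m$ uniformly integrable are stated under $\P$, while $\chi$ must be controlled under $\Qv$, and no estimate bridging the two is given. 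As it stands, the reciprocal part of Theorem \ref{qot} cannot be invoked, so the proof does not close.

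The paper's proof shows how to avoid this singular division altogether, and this is the idea your attempt is missing: apply the quotient theorem in the \emph{forward} direction to the product $DM$ rather than in the reverse direction to $N$. By Corollary \ref{c3} and the hypothesis $\langle M,D\rangle=0$, the process $DM$ is a (uniformly integrable) martingale; it vanishes on $H$ because $D$ does, so $DM\in\mathcal{R}(H)$ with zero finite-variation part, and part 1 of Theorem \ref{qot} then asserts that its quotient $(DM)_{\gamma+t}/D_{\gamma+t}=M_{\gamma+t}$ is a uniformly integrable $(\Qv,(\mathcal{G}_{\gamma+t})_{t\geq0})$-martingale --- the factor of $D$ in the numerator cancels the denominator, so the uniform integrability is delivered by the theorem itself rather than by a separate estimate near the zeros of $D$. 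From there the paper writes $M_{\gamma+t}-M_{\gamma}=\E[M_{\infty}|\mathcal{G}_{\gamma+t}]=\E[M_{\infty}|\mathcal{F}_{\gamma+t}]$ (using Lemma \ref{j80}), identifies $M_{t}-M_{\gamma_{t}}=\E[M_{\infty}1_{\{\gamma<t\}}|\mathcal{F}_{t}]$ through the uniqueness in Proposition \ref{rho}, and concludes by Proposition 2.2 of \cite{1}. If you want to salvage your route, the cleanest repair is exactly this substitution: replace your quotient $N_{\gamma+\cdot}/|D_{\gamma+\cdot}|$ by the quotient of $DM$ by $D$, after which your $\rho$-identification step goes through essentially unchanged.
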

\begin{proof}
According to Corollary \ref{c3}, $DM-\langle D,M\rangle=DM$ is a uniformly integrable martingale. Hence, we obtain from quotient theorem that $(M_{t+\gamma})_{t\geq0}$ is a uniformly integrable martingale with respect to $(\mathcal{G}_{\gamma+t})_{t\geq0}$. Which entails that $(M_{t+\gamma}-M_{\gamma})_{t\geq0}$ is also a uniformly integrable martingale with respect to the filtration $(\mathcal{G}_{\gamma+t})_{t\geq0}$. Hence, there exists a random variable $M_{\infty}$ such that $M_{\gamma+t}-M_{\gamma}\to M_{\infty}$ and $\forall t>0$, we have:
$$M_{t+\gamma}-M_{\gamma}=\E[M_{\infty}|\mathcal{G}_{\gamma+t}].$$
But, we know thanks to Lemma 5.7 of \cite{jeu} that $\mathcal{G}_{\gamma+t}=\mathcal{F}_{\gamma+t}$. Then,
$$M_{t+\gamma}-M_{\gamma}=\E[M_{\infty}|\mathcal{F}_{\gamma+t}].$$
Hence, it follows that
$$\rho(M_{\cdot+\gamma}-M_{\gamma})_{t}=\rho(\E[M_{\infty}|\mathcal{F}_{\gamma+\cdot}])_{t}.$$
Now, let $Z_{t}=M_{t}-M_{\gamma_{t}}$ and $Z^{'}_{t}=\E[M_{\infty}1_{\{\gamma<t\}}|\mathcal{F}_{t}]$. We can remark that $Z$ and $Z^{'}$ vanish on $H$ and $\forall t\geq0$,
$$Z_{\gamma+t}=M_{t+\gamma}-M_{\gamma}\text{ and } Z^{'}_{t+\gamma}=\E[M_{\infty}|\mathcal{F}_{\gamma+t}].$$
Consequently, we obtain from uniqueness that 
$$M_{t}-M_{\gamma_{t}}=\E[M_{\infty}1_{\{\gamma<t\}}|\mathcal{F}_{t}].$$
Consequently, we conclude from Proposition 2.2 of \cite{1} that the process $(M_{t}-M_{\gamma_{t}}:t\geq0)$ is an element of the class $\mathcal{R}(H)$.
\end{proof}

\begin{rem}
We retain that under assumptions of Proposition \ref{p1}, there exists a random variable $M_{\infty}$ such that $M_{\gamma+t}-M_{\gamma}\to M_{\infty}$ as $t\rightarrow\infty$ and for any stopping time $0<T<\infty$, we have:
$$M_{T}-M_{\gamma_{T}}=\E[M_{\infty}1_{\{\gamma<T\}}|\mathcal{F}_{T}].$$
Hence, in the particular case where $M$ vanishes on $H$, we obtain the representation result given in Proposition 2.2 of \cite{1}. That is,
$$M_{T}=\E[M_{\infty}1_{\{\gamma<T\}}|\mathcal{F}_{T}].$$
\end{rem}

Next corollary permit us to see that under assumptions of Proposition \ref{p1}, the process $(M_{\gamma_{t}}:t\geq0)$ is also in the class $\mathcal{M}(H)$.
\begin{coro}\label{c6}
Let $M$ be a process of the class $\mathcal{M}(H)$ such that its martingale part is uniformly integrable and $\langle M,D\rangle=0$. Hence, the process $(M_{\gamma_{t}}:t\geq0)$ is a relative martingale of the class $\mathcal{M}(H)$.
\end{coro}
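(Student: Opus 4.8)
The plan is to read off this corollary directly from Proposition \ref{p1} together with the vector space structure recorded in Remark \ref{r1}, rather than to rework the enlargement-of-filtration argument. The key observation is the trivial algebraic identity
$$M_{\gamma_{t}}=M_{t}-\left(M_{t}-M_{\gamma_{t}}\right),$$
which expresses the process of interest as a difference of two processes already known to lie in $\mathcal{M}(H)$.

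Concretely, I would first invoke Proposition \ref{p1}: since $M$ is assumed to have a uniformly integrable martingale part and to satisfy $\langle M,D\rangle=0$, that proposition guarantees that the process $Z=(M_{t}-M_{\gamma_{t}}:t\geq0)$ belongs to the Az\'ema--Yor class $\mathcal{R}(H)$. Next I would recall the inclusion $\mathcal{R}(H)\subset\mathcal{M}(H)$, which was established at the start of Section \ref{sec:2} (every element of $\mathcal{R}(H)$ vanishes on $H$, its martingale part is in particular a local martingale, and its finite-variation part has $dv$ carried by $H$, so all three requirements of Definition \ref{d1} hold). Hence $Z\in\mathcal{M}(H)$. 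Since $M\in\mathcal{M}(H)$ by hypothesis as well, Remark \ref{r1} tells us that the difference $M-Z$ is again an element of $\mathcal{M}(H)$.

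Finally I would identify $M-Z$ with the target process: for every $t\geq0$ we have $M_{t}-Z_{t}=M_{t}-(M_{t}-M_{\gamma_{t}})=M_{\gamma_{t}}$, so $(M_{\gamma_{t}}:t\geq0)\in\mathcal{M}(H)$, which is exactly the assertion. I do not anticipate any genuine obstacle here: the whole content is already absorbed into Proposition \ref{p1}, and what remains is only the linearity of the class. The only point deserving a word of care is to make explicit that the conclusion of Proposition \ref{p1} places $Z$ in $\mathcal{R}(H)$ and therefore in $\mathcal{M}(H)$, so that Remark \ref{r1} may legitimately be applied to the pair $M$ and $Z$; once that inclusion is stated, the proof is immediate.
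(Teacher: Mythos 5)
Your proof is correct and is essentially identical to the paper's: the authors likewise write $M_{\gamma_{t}}=(M_{\gamma_{t}}-M_{t})+M_{t}$, invoke Proposition \ref{p1} to place $(M_{t}-M_{\gamma_{t}}:t\geq0)$ in $\mathcal{R}(H)$, and conclude via the vector space property of Remark \ref{r1}. Your only addition is to spell out the inclusion $\mathcal{R}(H)\subset\mathcal{M}(H)$, which the paper leaves implicit (having noted it at the start of Section \ref{sec:2}).
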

\begin{proof}
We have $\forall t\geq0$, $M_{\gamma_{t}}=(M_{\gamma_{t}}-M_{t})+M_{t}$. But, according to Proposition \ref{p1}, $(M_{t}-M_{\gamma_{t}}:t\geq0)\in\mathcal{R}(H)$. Hence, we obtain from Remark \ref{r1} that $(M_{\gamma_{t}}:t\geq0)\in\mathcal{M}(H)$.
\end{proof}

\begin{rem}
We obtain from Proposition \ref{p1} and Corollary \ref{c6} that any process $M$ satisfying assumptions of Proposition \ref{p1} admits the decomposition given in \eqref{e}, where $M^{1}=(M_{t}-M_{\gamma_{t}}:t\geq0)$ and $M^{2}=(M_{\gamma_{t}}:t\geq0)$. 
\end{rem}

In the following, we denote $\widetilde{M}$  to represent  the process defined by $\forall t\geq0$, 
$$\widetilde{M}_{t}=M_{\gamma+t}-M_{\gamma}-\int_{\gamma}^{\gamma+t}{\frac{d\langle M,D\rangle_{s}}{D_{s}}}.$$
Recall that from Theorem 4.1.2 of \cite{1}, $\widetilde{M}$ is a local martingale with respect to the filtration $(\mathcal{G}_{\gamma+t})_{t\geq0}$ when $M$ is a process of the class $\mathcal{M}(H)$. Hence, we derive an another decomposition of the form \eqref{e} for $M$ in the case where $\widetilde{M}$ is a true martingale.
\begin{prop}\label{p2}
Let $M$ be a process of the class $\mathcal{M}(H)$ such that $\widetilde{M}$ is a $(\mathcal{G}_{\gamma+t})_{t\geq0}$ uniformly integrable martingale. Hence, the process $\left(M_{t}-M_{\gamma_{t}}-\int_{\gamma_{t}}^{t}{\frac{d\langle M,D\rangle_{s}}{D_{s}}}:t\geq0\right)$ is an element of the class $\mathcal{R}(H)$. 
\end{prop}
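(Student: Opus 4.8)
The plan is to reproduce the argument used for Proposition \ref{p1}, but with the local martingale $\widetilde{M}$ of Theorem \ref{gir} playing the role that the centered process $(M_{\cdot+\gamma}-M_{\gamma})$ played there. Since $M\in\mathcal{M}(H)$, Theorem \ref{gir} guarantees that $\widetilde{M}$ is a $(\mathcal{G}_{\gamma+t})_{t\geq0}$-local martingale, and by hypothesis it is in fact uniformly integrable. I would therefore first close it: there is an integrable random variable $\widetilde{M}_{\infty}$ with $\widetilde{M}_{t}\to\widetilde{M}_{\infty}$ and $\widetilde{M}_{t}=\E[\widetilde{M}_{\infty}|\mathcal{G}_{\gamma+t}]$ for every $t>0$. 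Because $\gamma+t>\gamma$ on $\{\gamma<\infty\}$, Lemma \ref{j80} gives $\mathcal{G}_{\gamma+t}=\mathcal{F}_{\gamma+t}$, whence $\widetilde{M}_{t}=\E[\widetilde{M}_{\infty}|\mathcal{F}_{\gamma+t}]$.

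The crux is then a pathwise identification. Writing $Z_{t}=M_{t}-M_{\gamma_{t}}-\int_{\gamma_{t}}^{t}\frac{d\langle M,D\rangle_{s}}{D_{s}}$ for the candidate process, I would first note that $Z$ vanishes on $H$: if $t\in H$ then $\gamma_{t}=t$, so both $M_{t}-M_{\gamma_{t}}$ and the integral over $[\gamma_{t},t]$ are zero. Next, and this is the key step, I would evaluate $Z$ along the shifted clock: for $t>0$ we have $\gamma_{\gamma+t}=\gamma$, since $\gamma=\sup H$ is the last zero of $D$ and hence no point of $H$ lies in $(\gamma,\gamma+t]$. Substituting $\gamma_{\gamma+t}=\gamma$ yields exactly $Z_{\gamma+t}=M_{\gamma+t}-M_{\gamma}-\int_{\gamma}^{\gamma+t}\frac{d\langle M,D\rangle_{s}}{D_{s}}=\widetilde{M}_{t}$.

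Finally I would identify $Z$ with a relative martingale. Set $Z'_{t}=\E[\widetilde{M}_{\infty}1_{\{\gamma<t\}}|\mathcal{F}_{t}]$ in its c\`adl\`ag version. Like $Z$, the process $Z'$ is $(\mathcal{F}_{t})$-optional and vanishes on $H$, and for $t>0$ one has $Z'_{\gamma+t}=\E[\widetilde{M}_{\infty}|\mathcal{F}_{\gamma+t}]=\widetilde{M}_{t}$ because $1_{\{\gamma<\gamma+t\}}=1$. Thus $Z$ and $Z'$ are two $(\mathcal{F}_{t})$-optional processes vanishing on $H$ whose $(\gamma+\cdot)$-shifts both coincide with $\widetilde{M}$; the uniqueness part of Proposition \ref{rho}, applied with $g=\gamma$ (the hypothesis $\P(0<\gamma<\infty)=1$ removing the $\{\gamma=0\}$ complication), forces $Z=Z'$. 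Hence $Z_{t}=\E[\widetilde{M}_{\infty}1_{\{\gamma<t\}}|\mathcal{F}_{t}]$, and Proposition 2.2 of \cite{1} delivers $Z\in\mathcal{R}(H)$.

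The main obstacle I anticipate is the identity $Z_{\gamma+t}=\widetilde{M}_{t}$, which rests on $\gamma_{\gamma+t}=\gamma$ and on the drift term transforming correctly under the time shift; one must check that $\int_{\gamma_{t}}^{t}\frac{d\langle M,D\rangle_{s}}{D_{s}}$ is well defined and vanishes on $H$ (so that $Z$ genuinely lies in the range of $\rho$), and that $Z'$ admits an optional c\`adl\`ag modification so that the uniqueness statement of Proposition \ref{rho} is applicable. Beyond these verifications, the argument is a faithful transcription of the proof of Proposition \ref{p1}.
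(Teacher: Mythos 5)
Your argument is correct and follows essentially the same route as the paper's own proof: close the uniformly integrable martingale $\widetilde{M}$, use Lemma \ref{j80} to replace $\mathcal{G}_{\gamma+t}$ by $\mathcal{F}_{\gamma+t}$, and identify $Z_{t}=M_{t}-M_{\gamma_{t}}-\int_{\gamma_{t}}^{t}{\frac{d\langle M,D\rangle_{s}}{D_{s}}}$ with $\E[M_{\infty}1_{\{\gamma<t\}}|\mathcal{F}_{t}]$ via the uniqueness in Proposition \ref{rho}, concluding with Proposition 2.2 of \cite{1}. Your additional verifications (that $Z$ vanishes on $H$ because $\gamma_{t}=t$ there, and that $\gamma_{\gamma+t}=\gamma$ gives $Z_{\gamma+t}=\widetilde{M}_{t}$) are exactly the steps the paper asserts without detail.
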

\begin{proof}
Since $\widetilde{M}$ is a uniformly integrable martingale with respect to the filtration  $(\mathcal{G}_{\gamma+t})_{t\geq0}$. Hence, there exists an integrable  random variable $M_{\infty}$ such that $\widetilde{M}_{t}\to M_{\infty}$ as $t\rightarrow\infty$ and $\forall t\geq0$,
$$\widetilde{M}_{t}=\E[M_{\infty}|\mathcal{G}_{\gamma+t}]=\E[M_{\infty}|\mathcal{F}_{\gamma+t}].$$
Hence, it follows that
$$\rho(\widetilde{M})_{t}=\rho(\E[M_{\infty}|\mathcal{F}_{\gamma+\cdot}])_{t}.$$
Now, let 
$$Z_{t}=M_{t}-M_{\gamma_{t}}-\int_{\gamma_{t}}^{t}{\frac{d\langle M,D\rangle_{s}}{D_{s}}}\text{ and }Z^{'}_{t}=\E[M_{\infty}1_{\{\gamma<t\}}|\mathcal{F}_{t}].$$
 We can remark that $Z$ and $Z^{'}$ vanish on $H$ and $\forall t\geq0$,
$$Z_{\gamma+t}=\widetilde{M}_{t}\text{ and } Z^{'}_{t+\gamma}=\E[M_{\infty}|\mathcal{F}_{\gamma+t}].$$
Hence, we obtain that 
$$M_{t}-M_{\gamma_{t}}-\int_{\gamma_{t}}^{t}{\frac{d\langle M,D\rangle_{s}}{D_{s}}}=\E[M_{\infty}1_{\{\gamma<t\}}|\mathcal{F}_{t}].$$
Consequently, we conclude from Proposition 2.2 of \cite{1} that the process 
$$\left(M_{t}-M_{\gamma_{t}}-\int_{\gamma_{t}}^{t}{\frac{d\langle M,D\rangle_{s}}{D_{s}}}:t\geq0\right)$$
 is an element of the class $\mathcal{R}(H)$.
\end{proof}

\begin{coro}\label{c7}
Let $M$ be a process of the class $\mathcal{M}(H)$ such that $\widetilde{M}$ is a $(\mathcal{G}_{\gamma+t})_{t\geq0}$ uniformly integrable martingale. Hence, the process $\left(M_{\gamma_{t}}+\int_{\gamma_{t}}^{t}{\frac{d\langle M,D\rangle_{s}}{D_{s}}}:t\geq0\right)$ is also an element of the class $\mathcal{M}(H)$. 
\end{coro}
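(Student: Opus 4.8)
The plan is to imitate the proof of Corollary \ref{c6}: I would express the target process as the difference of $M$ and the process shown to lie in $\mathcal{R}(H)$ by Proposition \ref{p2}, and then invoke the vector space structure of $\mathcal{M}(H)$.

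First I would record the purely algebraic identity, valid for every $t\geq 0$,
$$M_{\gamma_{t}}+\int_{\gamma_{t}}^{t}{\frac{d\langle M,D\rangle_{s}}{D_{s}}}=M_{t}-\left(M_{t}-M_{\gamma_{t}}-\int_{\gamma_{t}}^{t}{\frac{d\langle M,D\rangle_{s}}{D_{s}}}\right),$$
in which the $M_{t}$ terms cancel and the two copies of the covariation integral combine to leave exactly the claimed process on the left.

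Next, under the standing hypothesis that $\widetilde{M}$ is a $(\mathcal{G}_{\gamma+t})_{t\geq0}$ uniformly integrable martingale, Proposition \ref{p2} shows that the bracketed process is an element of the class $\mathcal{R}(H)$. Since every process of $\mathcal{R}(H)$ belongs to $\mathcal{M}(H)$ (as recalled among the examples at the beginning of Section \ref{sec:2}), the bracketed term lies in $\mathcal{M}(H)$.

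Finally, because $M\in\mathcal{M}(H)$ by assumption and the bracketed term is in $\mathcal{M}(H)$, their difference lies in $\mathcal{M}(H)$ by Remark \ref{r1}, which yields the assertion. I do not expect any genuine obstacle here; the only point requiring care is the sign bookkeeping in the displayed identity, ensuring that the $M_{t}$ contributions cancel and the covariation integrals add up correctly so that the right-hand side reduces precisely to $M_{\gamma_{t}}+\int_{\gamma_{t}}^{t}{\frac{d\langle M,D\rangle_{s}}{D_{s}}}$.
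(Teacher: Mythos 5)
Your proposal is correct and is essentially the paper's own argument: the paper likewise decomposes the target process as $M_t$ plus (the negative of) the process that Proposition \ref{p2} places in $\mathcal{R}(H)$, and then concludes via the inclusion $\mathcal{R}(H)\subset\mathcal{M}(H)$ and the vector space property of Remark \ref{r1}. The only cosmetic difference is that you invoke Proposition \ref{p2} with its stated sign and subtract, whereas the paper writes the bracketed term with the opposite sign, implicitly using that $\mathcal{R}(H)$ is stable under negation.
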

\begin{proof}
We have $\forall t\geq0$, 
$$M_{\gamma_{t}}+\int_{\gamma_{t}}^{t}{\frac{d\langle M,D\rangle_{s}}{D_{s}}}=\left(M_{\gamma_{t}}+\int_{\gamma_{t}}^{t}{\frac{d\langle M,D\rangle_{s}}{D_{s}}}-M_{t}\right)+M_{t}.$$ 
But, according to Proposition \ref{p2}, 
$$\left(M_{\gamma_{t}}+\int_{\gamma_{t}}^{t}{\frac{d\langle M,D\rangle_{s}}{D_{s}}}-M_{t}:t\geq0\right)\in\mathcal{R}(H).$$
 Hence, we obtain from Remark \ref{r1} that $\left(M_{\gamma_{t}}+\int_{\gamma_{t}}^{t}{\frac{d\langle M,D\rangle_{s}}{D_{s}}}:t\geq0\right)\in\mathcal{M}(H)$.
\end{proof}

\begin{coro}\label{c8}
Let $M$ be a process of the class $\mathcal{M}(H)$ such that $\widetilde{M}$ is a $(\mathcal{G}_{\gamma+t})_{t\geq0}$ uniformly integrable martingale and $d\langle M,D\rangle$ is carried by $H$. Hence,  the following hold: 
\begin{enumerate}
	\item $(M_{t}-M_{\gamma_{t}}:t\geq0)$ is an element of the class $\mathcal{R}(H)$;
	\item $(M_{\gamma_{t}}:t\geq0)$ is an element of the class $\mathcal{M}(H)$.
\end{enumerate} 
\end{coro}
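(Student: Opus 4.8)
The plan is to deduce both assertions directly from Proposition \ref{p2} and Corollary \ref{c7}, the only new ingredient being that the extra hypothesis forces the correction integral appearing there to vanish. Indeed, under the standing assumption that $\widetilde{M}$ is a $(\mathcal{G}_{\gamma+t})_{t\geq0}$ uniformly integrable martingale, Proposition \ref{p2} already gives
$$\left(M_{t}-M_{\gamma_{t}}-\int_{\gamma_{t}}^{t}{\frac{d\langle M,D\rangle_{s}}{D_{s}}}:t\geq0\right)\in\mathcal{R}(H),$$
while Corollary \ref{c7} gives
$$\left(M_{\gamma_{t}}+\int_{\gamma_{t}}^{t}{\frac{d\langle M,D\rangle_{s}}{D_{s}}}:t\geq0\right)\in\mathcal{M}(H).$$
So everything reduces to showing that the added hypothesis, namely that $d\langle M,D\rangle$ is carried by $H$, makes the integral $\int_{\gamma_{t}}^{t}\frac{d\langle M,D\rangle_{s}}{D_{s}}$ identically zero.

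To prove this vanishing, I would first note that the integrand is well defined on the domain of integration: by definition $\gamma_{t}=\sup\{s\leq t:D_{s}=0\}$ is the last zero of $D$ before $t$, so $D_{s}\neq0$ for every $s\in(\gamma_{t},t]$ and $1/D_{s}$ stays bounded there. The key point is that the interval $(\gamma_{t},t]$ is entirely contained in $H^{c}$, since $H=\{s\geq0:D_{s}=0\}$ and $D$ does not vanish on it (and if $D_{t}=0$ then $\gamma_{t}=t$ and the interval is empty, so the integral is trivially zero). Because $d\langle M,D\rangle$ is carried by $H$, equivalently its total variation gives no mass to $H^{c}$, the integral of any bounded measurable integrand over a subset of $H^{c}$ vanishes; applying this with the integrand $1/D_{s}$ yields $\int_{\gamma_{t}}^{t}\frac{d\langle M,D\rangle_{s}}{D_{s}}=0$. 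The only point requiring a word of care is the left endpoint $\gamma_{t}\in H$, but since $\langle M,D\rangle$ is continuous its associated measure is atomless, so that single point contributes nothing and it is irrelevant whether the integral is read over $(\gamma_{t},t]$ or $[\gamma_{t},t]$.

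With the correction integral eliminated, assertion 1 is immediate from Proposition \ref{p2}, since the process there collapses to $(M_{t}-M_{\gamma_{t}}:t\geq0)\in\mathcal{R}(H)$, and assertion 2 is immediate from Corollary \ref{c7}, whose process collapses to $(M_{\gamma_{t}}:t\geq0)\in\mathcal{M}(H)$. Alternatively, once assertion 1 is in hand, assertion 2 can be recovered exactly as in Corollary \ref{c6}, by writing $M_{\gamma_{t}}=(M_{\gamma_{t}}-M_{t})+M_{t}$ and invoking the vector space structure of $\mathcal{M}(H)$ from Remark \ref{r1} together with the inclusion $\mathcal{R}(H)\subset\mathcal{M}(H)$. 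I expect the only genuinely substantive step to be the vanishing of the correction integral, that is, the observation that $d\langle M,D\rangle$ being carried by $H=\{D=0\}$ kills an integral taken over the excursion interval $(\gamma_{t},t]$ on which $D$ stays away from $0$; everything else is bookkeeping layered on top of the two preceding results.
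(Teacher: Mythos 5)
Your proposal is correct and follows essentially the same route as the paper's own proof: invoke Proposition \ref{p2} and Corollary \ref{c7}, then observe that the correction integral $\int_{\gamma_{t}}^{t}\frac{d\langle M,D\rangle_{s}}{D_{s}}$ vanishes because $d\langle M,D\rangle$ is carried by $H$. Your added justification of that vanishing (the interval $(\gamma_{t},t]$ lies in $H^{c}$ and the continuous measure has no atom at $\gamma_{t}$) merely spells out the step the paper states in one line.
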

\begin{proof}
We obtain respectively from Proposition \ref{p2} and Corollary \ref{c7} that
$$\left(M_{t}-M_{\gamma_{t}}-\int_{\gamma_{t}}^{t}{\frac{d\langle M,D\rangle_{s}}{D_{s}}}:t\geq0\right)\in\mathcal{R}(H)$$
and
$$\left(M_{\gamma_{t}}+\int_{\gamma_{t}}^{t}{\frac{d\langle M,D\rangle_{s}}{D_{s}}}:t\geq0\right)\in\mathcal{M}(H).$$
However, $\forall t\geq0$, $\int_{\gamma_{t}}^{t}{\frac{d\langle M,D\rangle_{s}}{D_{s}}}=0$ since $d\langle M,D\rangle$ is carried by $H$. Which completes the proof.
\end{proof}


\section{Applications to skew brownian motion equations}\label{sec:3}

In this section, weak solutions to time-homogeneous and time-inhomogeneous skew Brownian motions starting form zero are constructed on the one hand, with the help of a geometric Itô-Mckean skew Brownian motion with Azzalini skew normal distribution $X^{\delta}=\sqrt{1-\delta^{2}}B+\delta|W|$ and on the other hand, we do it from  arbitrary continuous processes of the class $\mathcal{M}(H)$. More precisely, we talk about of the two next equations:
\begin{equation}\label{1}
	X_{t}=x+B_{t}+(2\alpha-1)L_{t}^{0}(X)
\end{equation}
and
\begin{equation}\label{2}
	X_{t}=x+B_{t}+\int_{0}^{t}{(2\alpha(s)-1)dL_{s}^{0}(X)},
\end{equation}
where $B$ is a standard Brownian motion and $x=0$. It must be remark that solutions had already been built from the processes of the class $(\Sigma)$ (see \cite{fjo}). This should not be seen as a redundancy because the above mentioned processes are not necessary in the class $(\Sigma)$. Indeed, it is only when $X^{\delta}$ vanishes on $\{t\geq0:W_{t}=0\}$ that $X^{\delta}\in(\Sigma)$. And on another hand, an element $X$ of the class $\mathcal{M}(H)$ is in the class $(\Sigma)$ only when $X$ vanishes on $H$. 

\subsection{Construction of solution from Itô-Mckean skew brownian motion}

Recall that we presented $X^{\delta}$ in Section \ref{sec:1} as an element of the class $\mathcal{M}(H)$. In fact, this is true only when $W$ vanishes on $H$. In this subsection, we shall consider $X^{\delta}$ in general case. That is, $W$ does not necessarily vanish on $H$. Thus, under these assumptions, we construct from $X^{\delta}=\sqrt{1-\delta^{2}}B+\delta|W|$, solutions for Equations \ref{1} and \ref{2}. For this purpose, we shall set $Z^{W}$ and $Z^{1}$ to represent processes constructed in \eqref{zalpha} with respect to $W$ and $(Z^{W}_{g_{t}}X^{\delta}_{t};t\geq0)$ respectively and $Z^{2}$ is the process defined in \eqref{Zalpha} with respect to $(Z^{W}_{g_{t}}X^{\delta}_{t};t\geq0)$. We shall also set $g_{t}=\sup{\{t\geq0:X^{\delta}_{t}=0\}}$.
\begin{prop}
The process $Y^{\delta,1}$ defined by $\forall t\geq0$, $Y^{\delta,1}_{t}=Z^{1}_{t}Z^{W}_{g_{t}}X^{\delta}_{t}$ is a weak solution of \eqref{1} with the parameter $\alpha$ and starting from 0. 
\end{prop}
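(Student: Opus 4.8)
The plan is to read the skew-reflection equation directly off the Ouknine--Bouhadou balayage identity. Writing $V:=Z^W_{g_\cdot}X^{\delta}$, the process $Z^1$ of \eqref{zalpha} is by definition the excursion-signing process attached to $V$, so applying Proposition \ref{pzalph} to the continuous semimartingale $V$ gives immediately
$$Y^{\delta,1}_t=Z^1_tV_t=\int_0^tZ^1_s\,dV_s+(2\alpha-1)L^0_t(Z^1V)=\int_0^tZ^1_s\,dV_s+(2\alpha-1)L^0_t(Y^{\delta,1}).$$
The local-time term already carries the correct skew coefficient $(2\alpha-1)$, and $V_0=0$ forces $Y^{\delta,1}_0=0$ with $Y^{\delta,1}$ continuous (both summands are). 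Hence the whole statement reduces to proving that $\beta_t:=\int_0^tZ^1_s\,dV_s$ is a standard Brownian motion, which would exhibit $Y^{\delta,1}$ as a weak solution of \eqref{1} with $x=0$ and parameter $\alpha$.

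To identify $\beta$ as a Brownian motion I would use L\'evy's characterization, which needs two facts about $V$. First, its bracket must be $\langle V\rangle_t=t$: applying Tanaka's formula to $|W|$ writes $X^{\delta}=\sqrt{1-\delta^2}B+\delta\int_0^\cdot\mathrm{sign}(W_s)\,dW_s+\delta L^0(W)$, whose martingale part has bracket $(1-\delta^2)t+\delta^2t=t$ (the cross bracket $\langle B,|W|\rangle$ vanishes by independence of $B$ and $W$); multiplying by the excursion sign $Z^W_{g_\cdot}\in\{-1,1\}$ leaves the bracket unchanged, so $\langle\beta\rangle_t=\int_0^t(Z^1_s)^2\,d\langle V\rangle_s=t$ since $(Z^1_s)^2=1$ off the Lebesgue-negligible set $\{V=0\}$. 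Second, $\beta$ must be a local martingale: splitting $V=N+A^V$ into its martingale and finite-variation parts gives $\beta=\int_0^\cdot Z^1_s\,dN_s+\int_0^\cdot Z^1_s\,dA^V_s$, and the first integral is a local martingale, so everything hinges on the vanishing of the second. That integral vanishes exactly when $A^V$ is carried by $\{V=0\}$, because there $Z^1$ is null (it is supported on the open excursion intervals of $V$).

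The hard part will be this last point: showing that the finite-variation part of $V=Z^W_{g_\cdot}X^{\delta}$ is carried by its own zero set $\{V=0\}=\{X^{\delta}=0\}$. The obstruction is the local-time drift $\delta\,dL^0(W)$ coming from the component $\delta|W|$: it lives on $\{W=0\}$, a set that generically sits inside the excursion intervals of $X^{\delta}$, hence off $\{V=0\}$. The role of the balayage weight $Z^W_{g_\cdot}$ is precisely to annihilate this drift, in the same spirit as the classical identity $\int_0^\cdot Z^W_s\,dL^0_s(W)=0$ that follows from $Z^W$ vanishing on $\{W=0\}$. Making this rigorous means writing the progressive balayage decomposition of $V$ from Proposition \ref{balpro}, controlling its remainder $R$ (already carried by $\{X^{\delta}=0\}$), and verifying that the surviving $L^0(W)$-contribution cancels, while also handling the negligible set of times where $W$ and $X^{\delta}$ vanish simultaneously (where $Z^W_{g_\cdot}$ could fail to be $\pm1$). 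Once the interplay between the excursions of $W$ and those of $X^{\delta}$ is settled, the conclusion follows at once from L\'evy's theorem.
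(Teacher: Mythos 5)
Your proposal is correct and follows essentially the same route as the paper: the paper also reduces everything to applying Proposition \ref{pzalph} to $V=Z^{W}_{g_{\cdot}}X^{\delta}$, and it settles your ``hard part'' exactly as you sketch it --- the progressive balayage formula (Proposition \ref{balpro}) gives $V_{t}=\int_{0}^{t}{Z^{W}_{s}dX^{\delta}_{s}}+R_{t}$ with $dR$ carried by $\{X^{\delta}=0\}\subset\{Z^{1}=0\}$, Tanaka's formula makes the drift $\delta\int_{0}^{t}{Z^{W}_{s}dL^{0}_{s}(W)}$ vanish because $Z^{W}$ is null on $\{W=0\}$, and the surviving term $\sqrt{1-\delta^{2}}\int_{0}^{t}{Z^{1}_{s}Z^{W}_{s}dB_{s}}+\delta\int_{0}^{t}{Z^{1}_{s}Z^{W}_{s}\,\mathrm{sign}(W_{s})dW_{s}}$ is identified as a Brownian motion via L\'evy's characterization, using independence of $B$ and $W$ and the fact that all the integrands are $\pm1$-valued off a Lebesgue-null set. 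The only difference is cosmetic: you apply Proposition \ref{pzalph} before decomposing $dV$, while the paper decomposes $dV$ first and then applies Proposition \ref{pzalph}.
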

\begin{proof}
By applying the balayage formula in the progressive case, we get
$$Z^{W}_{g_{t}}X^{\delta}_{t}=\int_{0}^{t}{^{p}(Z^{W}_{g_{s}})dX^{\delta}_{s}}+R_{t},$$
where $R$ is a continuous process with finite variations such that $dR$ is carried by ${\{t\geq0:X^{\delta}_{t}=0\}}$. Since $W$ is continuous, we have: $^{p}(Z^{W}_{g_{s}})=Z^{W}_{g_{s-}}=Z^{W}_{s-}$. Thus, it follows from the continuity of $X^{\delta}$ that
$$Z^{W}_{g_{t}}X^{\delta}_{t}=\int_{0}^{t}{Z^{W}_{s}dX^{\delta}_{s}}+R_{t}.$$
Now, remark from Tanaka's formula that
$$dX^{\delta}_{s}=\sqrt{1-\delta^{2}}dB_{s}+\delta sign(W_{s})dW_{s}+\delta dL_{s}^{0}(W).$$
Hence, we obtain:
$$Z^{W}_{g_{t}}X^{\delta}_{t}=\sqrt{1-\delta^{2}}\int_{0}^{t}{Z^{W}_{s}dB_{s}}+\delta\int_{0}^{t}{Z^{W}_{s}sign(W_{s})dW_{s}}+\delta\int_{0}^{t}{Z^{W}_{s}dL_{s}^{0}(W)}+R_{t}.$$
Which becomes
$$Z^{W}_{g_{t}}X^{\delta}_{t}=\sqrt{1-\delta^{2}}\int_{0}^{t}{Z^{W}_{s}dB_{s}}+\delta\int_{0}^{t}{Z^{W}_{s}sign(W_{s})dW_{s}}+R_{t}$$
since $dL^{0}(W)$ is carried by ${\{t\geq0:W_{t}=0\}}={\{t\geq0:Z^{W}_{t}=0\}}$. Hence, through Proposition \ref{pzalph}, we get
 $$Y^{\delta,1}_{t}=\sqrt{1-\delta^{2}}\int_{0}^{t}{Z^{1}_{s}Z^{W}_{s}dB_{s}}+\delta\int_{0}^{t}{Z^{1}_{s}Z^{W}_{s}sign(W_{s})dW_{s}}+\int_{0}^{t}{Z^{1}_{s}dR_{s}}+(2\alpha-1)L_{0}^{0}(Y^{\delta,1}).$$
But, $dR$ is carried by ${\{t\geq0:X^{\delta}_{t}=0\}}$ and $\{t\geq0:X^{\delta}_{t}=0\}\subset\{t\geq0:Z^{W}_{g_{t}}X^{\delta}_{t}=0\}=\{t\geq0:Z^{1}_{t}=0\}$. Therefore,
$$Y^{\delta,1}_{t}=\sqrt{1-\delta^{2}}\int_{0}^{t}{Z^{1}_{s}Z^{W}_{s}dB_{s}}+\delta\int_{0}^{t}{Z^{1}_{s}Z^{W}_{s}sign(W_{s})dW_{s}}+(2\alpha-1)L_{t}^{0}(Y^{\delta,1}).$$
Now, remark that the process $M$ defined by $\forall t\geq0$, 
$$M_{t}=\sqrt{1-\delta^{2}}\int_{0}^{t}{Z^{1}_{s}Z^{W}_{s}dB_{s}}+\delta\int_{0}^{t}{Z^{1}_{s}Z^{W}_{s}sign(W_{s})dW_{s}}$$
 is a continuous local martingale. In addition, we have thanks to the continuity of processes $B$ and $W$:
$$M_{t}=\sqrt{1-\delta^{2}}\int_{0}^{t}{k^{1}_{s}k^{W}_{s}dB_{s}}+\delta\int_{0}^{t}{k^{1}_{s}k^{W}_{s}sign(W_{s})dW_{s}},$$
where $k^{1}$ and $k^{W}$ are progressive processes constructed in \eqref{zalpha} with respect to $W$ and $(Z^{W}_{g_{t}}X^{\delta}_{t};t\geq0)$ respectively. On another hand, we have:
$$\langle M,M\rangle_{t}=(1-\delta^{2})\int_{0}^{t}{(k^{1}_{s}k^{W}_{s})^{2}ds}+\delta^{2}\int_{0}^{t}{(k^{1}_{s}k^{W}_{s}sign(W_{s}))^{2}ds}.$$
Which implies: $\langle M,M\rangle_{t}=t$ because $k^{1}_{s}\in\{-1,1\}$, $k^{W}_{s}\in\{-1,1\}$ and $sign(W_{s})\in\{-1,1\}$. Consequently, $M$ is a Brownian motion. This completes the proof.
\end{proof}

\begin{prop}
The process $Y^{\delta,2}$ defined by $\forall t\geq0$, $Y^{\delta,2}_{t}=Z^{2}_{t}Z^{W}_{g_{t}}X^{\delta}_{t}$ is a weak solution of \eqref{2} with the parameter $\alpha$ and starting from 0. 
\end{prop}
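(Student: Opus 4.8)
The plan is to run exactly the same argument as in the preceding proposition, the only change being that the constant-$\alpha$ balayage identity of Proposition \ref{pzalph} is replaced by its time-inhomogeneous analogue, Proposition \ref{siam}. First I would reproduce the semimartingale decomposition of $Z^{W}_{g_{t}}X^{\delta}_{t}$ obtained there. Applying the balayage formula in the progressive case (Proposition \ref{balpro}) to $Z^{W}_{g_{t}}X^{\delta}_{t}$, using the continuity of $W$ to identify $^{p}(Z^{W}_{g_{s}})=Z^{W}_{s-}$, the continuity of $X^{\delta}$, the Tanaka expansion $dX^{\delta}_{s}=\sqrt{1-\delta^{2}}dB_{s}+\delta\,sign(W_{s})dW_{s}+\delta dL_{s}^{0}(W)$, and the fact that $dL^{0}(W)$ is carried by $\{W=0\}=\{Z^{W}=0\}$, I arrive at
$$Z^{W}_{g_{t}}X^{\delta}_{t}=\sqrt{1-\delta^{2}}\int_{0}^{t}{Z^{W}_{s}dB_{s}}+\delta\int_{0}^{t}{Z^{W}_{s}sign(W_{s})dW_{s}}+R_{t},$$
where $R$ is continuous with finite variations and $dR$ is carried by $\{t\geq0:X^{\delta}_{t}=0\}$.

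Next I would invoke Proposition \ref{siam} with $Y=Z^{W}_{g_{\cdot}}X^{\delta}$ and $\mathcal{Z}^{Y}=Z^{2}$, which yields
$$Y^{\delta,2}_{t}=Z^{2}_{t}Z^{W}_{g_{t}}X^{\delta}_{t}=\int_{0}^{t}{Z^{2}_{s}d(Z^{W}_{g_{s}}X^{\delta}_{s})}+\int_{0}^{t}{(2\alpha(s)-1)dL_{s}^{0}(Y^{\delta,2})}.$$
Substituting the decomposition above and discarding $\int_{0}^{t}{Z^{2}_{s}dR_{s}}$, which vanishes because $dR$ is carried by $\{X^{\delta}=0\}\subset\{Z^{W}_{g_{t}}X^{\delta}_{t}=0\}=\{Z^{2}=0\}$, I am left with
$$Y^{\delta,2}_{t}=\sqrt{1-\delta^{2}}\int_{0}^{t}{Z^{2}_{s}Z^{W}_{s}dB_{s}}+\delta\int_{0}^{t}{Z^{2}_{s}Z^{W}_{s}sign(W_{s})dW_{s}}+\int_{0}^{t}{(2\alpha(s)-1)dL_{s}^{0}(Y^{\delta,2})}.$$

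Finally I would identify the martingale part as a Brownian motion. Setting $M_{t}=\sqrt{1-\delta^{2}}\int_{0}^{t}{k^{2}_{s}k^{W}_{s}dB_{s}}+\delta\int_{0}^{t}{k^{2}_{s}k^{W}_{s}sign(W_{s})dW_{s}}$, where $k^{2}$ and $k^{W}$ are the progressive versions of $Z^{2}$ and $Z^{W}$ (the substitution being licit since $B$ and $W$ are continuous), $M$ is a continuous local martingale and its bracket is $\langle M,M\rangle_{t}=(1-\delta^{2})\int_{0}^{t}{(k^{2}_{s}k^{W}_{s})^{2}ds}+\delta^{2}\int_{0}^{t}{(k^{2}_{s}k^{W}_{s}sign(W_{s}))^{2}ds}=t$, because each factor $k^{2}_{s}$, $k^{W}_{s}$ and $sign(W_{s})$ lies in $\{-1,1\}$ off the relevant zero set. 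By L\'evy's characterization $M$ is a standard Brownian motion, so the identity reads $Y^{\delta,2}_{t}=M_{t}+\int_{0}^{t}{(2\alpha(s)-1)dL_{s}^{0}(Y^{\delta,2})}$, which is precisely \eqref{2} with $x=0$.

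The main obstacle is not a single calculation but the bookkeeping needed to legitimately apply Proposition \ref{siam}: one must confirm that $Y=Z^{W}_{g_{\cdot}}X^{\delta}$ is a continuous semimartingale (guaranteed by the balayage decomposition) and that $Z^{2}$ is exactly the process $\mathcal{Z}^{Y}$ built via \eqref{Zalpha} relative to this $Y$, so that the local-time term carries the correct piecewise-constant weight $2\alpha(s)-1$. The genuinely new point compared with the constant-$\alpha$ proposition is the time-dependence itself: $\mathcal{Z}^{Y}$ is assembled from the $m$ independent Bernoulli families attached to the partition underlying $\alpha$, and I would check that this refinement preserves the $\{-1,1\}$-valuedness exploited in the L\'evy step.
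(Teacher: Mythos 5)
Your proposal is correct and follows essentially the same route as the paper: the same balayage decomposition of $Z^{W}_{g_{t}}X^{\delta}_{t}$ (which the paper simply quotes from the preceding proof rather than re-deriving), the same application of Proposition \ref{siam} with the $\int_{0}^{t}{Z^{2}_{s}dR_{s}}$ term killed by the inclusion of zero sets, and the same L\'evy-characterization step showing $\langle M',M'\rangle_{t}=t$. No gaps; this matches the paper's argument.
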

\begin{proof}
We have yet showed in the above last proof that
$$Z^{W}_{g_{t}}X^{\delta}_{t}=\sqrt{1-\delta^{2}}\int_{0}^{t}{Z^{W}_{s}dB_{s}}+\delta\int_{0}^{t}{Z^{W}_{s}sign(W_{s})dW_{s}}+R_{t}.$$
Hence, from Proposition \ref{siam}, we get
 $$Y^{\delta,2}_{t}=\sqrt{1-\delta^{2}}\int_{0}^{t}{Z^{2}_{s}Z^{W}_{s}dB_{s}}+\delta\int_{0}^{t}{Z^{2}_{s}Z^{W}_{s}sign(W_{s})dW_{s}}+\int_{0}^{t}{Z^{2}_{s}dR_{s}}+\int_{0}^{t}{(2\alpha(s)-1)dL_{s}^{0}(Y^{\delta,2})}.$$
But, $dR$ is carried by ${\{t\geq0:X^{\delta}_{t}=0\}}$ and $\{t\geq0:X^{\delta}_{t}=0\}\subset\{t\geq0:Z^{W}_{g_{t}}X^{\delta}_{t}=0\}=\{t\geq0:Z^{2}_{t}=0\}$. Therefore,
$$Y^{\delta,2}_{t}=\sqrt{1-\delta^{2}}\int_{0}^{t}{Z^{2}_{s}Z^{W}_{s}dB_{s}}+\delta\int_{0}^{t}{Z^{2}_{s}Z^{W}_{s}sign(W_{s})dW_{s}}+\int_{0}^{t}{(2\alpha(s)-1)dL_{s}^{0}(Y^{\delta,2})}.$$
Now, remark that the process $M$ defined by $\forall t\geq0$, 
$$M^{'}_{t}=\sqrt{1-\delta^{2}}\int_{0}^{t}{Z^{2}_{s}Z^{W}_{s}dB_{s}}+\delta\int_{0}^{t}{Z^{2}_{s}Z^{W}_{s}sign(W_{s})dW_{s}}$$
 is a continuous local martingale. In addition, we have thanks to the continuity of processes $B$ and $W$:
$$M^{'}_{t}=\sqrt{1-\delta^{2}}\int_{0}^{t}{k^{2}_{s}k^{W}_{s}dB_{s}}+\delta\int_{0}^{t}{k^{1}_{s}k^{W}_{s}sign(W_{s})dW_{s}},$$
where $k^{2}$ and $k^{W}$ are progressive processes constructed in () with respect to $W$ and $(Z^{W}_{g_{t}}X^{\delta}_{t};t\geq0)$ respectively. On another hand, we have:
$$\langle M^{'},M^{'}\rangle_{t}=(1-\delta^{2})\int_{0}^{t}{(k^{2}_{s}k^{W}_{s})^{2}ds}+\delta^{2}\int_{0}^{t}{(k^{2}_{s}k^{W}_{s}sign(W_{s}))^{2}ds}.$$
Which implies: $\langle M^{'},M^{'}\rangle_{t}=t$ because $k^{2}_{s}\in\{-1,1\}$, $k^{W}_{s}\in\{-1,1\}$ and $sign(W_{s})\in\{-1,1\}$. Consequently, $M^{'}$ is a Brownian motion. This completes the proof.
\end{proof}

\subsection{Construction of solutions from relative martingales}
Now, we shall derive solutions by using continuous processes of the class $\mathcal{M}(H)$. Thus, for any continuous process $X$ of the last mentioned class, we let $g_{t}=\sup\{s\leq t:X_{t}=0\}$ and $\tau_{t}=\inf\{s\geq0:\langle X,X\rangle_{s}>t\}$. Let $Z^{D}$ and $Z^{1}$ be progressive processes defined in \eqref{zalpha} with respect to $D$ and $(Z^{D}_{g_{t}}X_{t}:t\geq0)$ respectively. $Z^{2}$ is the progressive process defined in \eqref{Zalpha} with respect to $(Z^{D}_{g_{t}}X_{t}:t\geq0)$.
\begin{prop}
The process $\mathcal{Y}^{1}$ defined by $\forall t\geq0$, $\mathcal{Y}^{1}_{t}=Z^{1}_{t}Z^{D}_{g_{\tau_{t}}}X_{\tau_{t}}$ is a weak solution of \eqref{1} with the parameter $\alpha$ and starting from 0. 
\end{prop}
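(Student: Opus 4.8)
The goal is to show that $\mathcal{Y}^{1}_t = Z^1_t Z^D_{g_{\tau_t}} X_{\tau_t}$ solves the time-homogeneous skew Brownian motion equation \eqref{1}. The strategy mirrors the two propositions just proved for $X^\delta$, but now the driving process is a general continuous $X \in \mathcal{M}(H)$, so the extra ingredient is the time-change $\tau_t = \inf\{s \geq 0 : \langle X,X\rangle_s > t\}$, which is needed to turn the local-martingale part of $X$ into a genuine Brownian motion (since $X$, unlike $\sqrt{1-\delta^2}B + \delta|W|$, need not have quadratic variation equal to $t$). First I would apply the progressive balayage formula (Proposition~\ref{balpro}) to the pair $Z^D$ and $X$, obtaining
\[
Z^D_{g_t} X_t = \int_0^t {}^p(Z^D_{g_s})\, dX_s + R_t,
\]
where $dR$ is carried by $\{X_s = 0\}$; using continuity of $D$ and $X$ as in the earlier proofs, ${}^p(Z^D_{g_s}) = Z^D_{s-} = Z^D_s$ under the integral sign.

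Next I would split $X = m + v$ according to its membership in $\mathcal{M}(H)$, so that $\int_0^t Z^D_s\, dX_s = \int_0^t Z^D_s\, dm_s + \int_0^t Z^D_s\, dv_s$. The key observation is that $Z^D$ vanishes exactly on $H = \{D_s = 0\}$, and $dv$ is carried by $H$, so the finite-variation integral $\int_0^t Z^D_s\, dv_s$ vanishes, leaving $Z^D_{g_t} X_t = \int_0^t Z^D_s\, dm_s + R_t$. Then I would apply Proposition~\ref{pzalph} to the semimartingale $Z^D_{g_\cdot} X$ with the process $Z^1$, which produces
\[
Z^1_t Z^D_{g_t} X_t = \int_0^t Z^1_s Z^D_s\, dm_s + \int_0^t Z^1_s\, dR_s + (2\alpha - 1)L^0_t(Z^1 Z^D_{g_\cdot} X).
\]
As in the $X^\delta$ case, $dR$ is carried by $\{X_s = 0\} \subset \{Z^D_{g_s} X_s = 0\} = \{Z^1_s = 0\}$, so the $\int_0^t Z^1_s\, dR_s$ term drops out, and one is left with a local martingale $N_t := \int_0^t Z^1_s Z^D_s\, dm_s$ plus the local-time term.

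The crucial final step, and the place where this proof genuinely differs from the preceding two, is the time-change. I would compute $\langle N, N\rangle_t = \int_0^t (Z^1_s)^2 (Z^D_s)^2\, d\langle m,m\rangle_s$; since $Z^1_s, Z^D_s \in \{-1,0,1\}$ and the local-martingale part carries the full quadratic variation of $X$, this reduces (off the zero set, which has zero $d\langle X,X\rangle$-measure) to $\langle X, X\rangle_t$. Evaluating everything at $\tau_t$ and invoking the Dambis--Dubins--Schwarz theorem, the time-changed local martingale $N_{\tau_t}$ becomes a standard Brownian motion $B_t$, while the local-time term transforms as $L^0_{\tau_t}(\cdot) = L^0_t(\mathcal{Y}^1)$ under the same change of clock. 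The main obstacle I anticipate is the careful bookkeeping of the time-change on the local-time term: one must verify that the occupation-time/local-time functional is consistent with the time substitution $t \mapsto \tau_t$ and that $\tau$ is continuous and strictly increasing enough (using $\langle X,X\rangle$ increasing) for DDS to apply cleanly; handling the degeneracy where $Z^D$ or $Z^1$ vanishes requires checking that these null sets do not contribute to the quadratic variation, which follows because $d\langle X,X\rangle$ and the relevant local times are supported away from, or compatibly with, the zero sets of $X$ and $D$.
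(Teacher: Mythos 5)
Your route differs from the paper's, which disposes of this proposition in two lines: it first invokes its balayage lemma for progressive processes vanishing on $H$ (Lemma \ref{l5}) to conclude that $Y_t:=Z^{D}_{g_t}X_t$ is a continuous process of class $(\Sigma)$, and then cites Proposition 8 of \cite{fjo}, applied to $Y$, as a black box. Your first half --- progressive balayage, the identification ${}^{p}(Z^{D}_{g_\cdot})=Z^{D}_{\cdot}$ under the integral, and killing $\int_0^t Z^{D}_s\,dv_s$ because $Z^{D}$ vanishes on $H$ while $dv$ is carried by $H$ --- is exactly the paper's proof of Lemma \ref{l5}, so that part is sound (at the paper's own level of rigor). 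Your second half re-derives inline the content of the cited external proposition, and that is where there is a genuine gap.

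The gap is the identification $\langle N,N\rangle_t=\langle X,X\rangle_t$ for $N_t=\int_0^t Z^{1}_sZ^{D}_s\,dm_s$. What the computation actually gives is $\langle N,N\rangle_t=\int_0^t 1_{\{Y_s\neq0\}}1_{H^{c}}(s)\,d\langle X,X\rangle_s$, where $\{Y_s\neq0\}=\{X_s\neq0\}\cap\{g_s\notin H\}$. The occupation-time formula for $X$ kills only the contribution of $\{X_s=0\}$; it says nothing about $H=\{D_s=0\}$, the zero set of a \emph{different} process, nor about $\{s:g_s\in H\}$, which is a union of entire excursion intervals of $X$ and can certainly carry $d\langle X,X\rangle$-mass (extreme illustration: the hypotheses allow $X=D$, in which case $g_s\in H$ for all $s$, so $Y\equiv0$ and $N\equiv0$). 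Hence $N_{\tau_\cdot}$, with $\tau$ the inverse of $\langle X,X\rangle$, need not be a Brownian motion, and Dambis--Dubins--Schwarz does not apply as you claim; your closing remark that the offending null sets ``do not contribute to the quadratic variation'' is precisely the unproved (and in general false) point. What the occupation formula applied to $Y$ itself does give is $\langle N,N\rangle=\langle Y,Y\rangle=\int_0^{\cdot}1_{H^{c}}(s)\,d\langle X,X\rangle_s$, so the argument is repaired only by taking the time change to be the inverse of $\langle Y,Y\rangle$ and then identifying it with $\tau$. Relatedly, time-changing your identity yields $Z^{1}_{\tau_t}Y_{\tau_t}$, not the process $Z^{1}_tY_{\tau_t}$ of the statement: for these to agree, $Z^{1}$ must be attached to the excursions of the time-changed process $Y_{\tau_\cdot}$, which forces the opposite order of operations --- time-change first, so the martingale part becomes a Brownian motion, then apply Proposition \ref{pzalph} on the new clock. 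That reversed order is how the result the paper cites actually proceeds and it avoids both difficulties; flipping signs first and changing time last, as you propose, is where the proof breaks.
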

\begin{proof}
First remark that $Z^{D}$ is a continuous bounded progressive process which vanishes on $H$. Hence, we obtain from Lemma \ref{l6} that $(Z^{D}_{g_{t}}X_{t}:t\geq0)$ is a continuous process of the class $(\Sigma)$. Hence, we obtain the result by applying Proposition 8 of \cite{fjo} on the process  $(Z^{D}_{g_{t}}X_{t}:t\geq0)$.
\end{proof}

\begin{prop}
The process $\mathcal{Y}^{2}$ defined by $\forall t\geq0$, $\mathcal{Y}^{2}_{t}=Z^{2}_{t}Z^{D}_{g_{\tau_{t}}}X_{\tau_{t}}$ is a weak solution of \eqref{2} with the parameter $\alpha$ and starting from 0. 
\end{prop}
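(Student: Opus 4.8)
The plan is to repeat verbatim the strategy used for $\mathcal{Y}^{1}$, simply swapping the single-parameter ingredient for its piecewise-constant counterpart: Proposition \ref{siam} in place of Proposition \ref{pzalph}, the process $Z^{2}$ of \eqref{Zalpha} in place of $Z^{1}$ of \eqref{zalpha}, and the time-inhomogeneous construction of \cite{fjo} in place of the homogeneous one. The whole point is that the underlying class-$(\Sigma)$ process is \emph{the same} in both cases, namely $(Z^{D}_{g_{t}}X_{t}:t\geq0)$; only the outer sign-flipping process and the target equation change.

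Concretely, I would first record that $Z^{D}$, being built from the excursion intervals of $D$ as in \eqref{zalpha}, is a bounded progressive process taking values in $\{-1,0,1\}$, continuous on each excursion, and vanishing exactly on $H=\{t\geq0:D_{t}=0\}$. Applying Lemma \ref{l5} to the continuous relative martingale $X\in\mathcal{M}(H)$ with the weight $k=Z^{D}$ (which is null on $H$) then yields that $(Z^{D}_{g_{t}}X_{t}:t\geq0)$ is a continuous process of the class $(\Sigma)$ starting from $0$. At this stage I would invoke the time-inhomogeneous analogue of Proposition 8 of \cite{fjo}, i.e. the construction of weak solutions of \eqref{2} out of a continuous $(\Sigma)$ process, applied to $(Z^{D}_{g_{t}}X_{t})$ with the time change $\tau_{t}=\inf\{s\geq0:\langle X,X\rangle_{s}>t\}$; the bracket of the local-martingale part is thereby normalized so that the driving martingale becomes a standard Brownian motion, and the piecewise-constant balayage identity of Proposition \ref{siam} is what generates the inhomogeneous local-time term $\int_{0}^{t}(2\alpha(s)-1)\,dL_{s}^{0}(\mathcal{Y}^{2})$ rather than the single factor $(2\alpha-1)L_{t}^{0}$ appearing in \eqref{1}.

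The hard part will be the bookkeeping in the reduction to \cite{fjo}: one must check that $(Z^{D}_{g_{t}}X_{t})$ genuinely satisfies the hypotheses of that external statement, that is, continuity and nullity at $0$, and that the time change $\tau_{t}$ transforms its continuous local-martingale part into a Brownian motion, so that the zero set is preserved and the local time produced by Proposition \ref{siam} really coincides with $L^{0}(\mathcal{Y}^{2})$. Once this identification of zero sets and local times is secured, everything else is a mechanical transcription of the $\mathcal{Y}^{1}$ argument, and the conclusion that $\mathcal{Y}^{2}_{t}=Z^{2}_{t}Z^{D}_{g_{\tau_{t}}}X_{\tau_{t}}$ solves \eqref{2} with parameter $\alpha$ and initial value $0$ follows immediately.
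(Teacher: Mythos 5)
Your proposal takes essentially the same route as the paper: the paper's proof (relying on the class-$(\Sigma)$ property of $(Z^{D}_{g_{t}}X_{t}:t\geq0)$ established via Lemma \ref{l5} in the preceding proposition) consists precisely of applying Proposition 9 of \cite{fjo} --- the time-inhomogeneous counterpart of Proposition 8 --- to that process. Your extra bookkeeping (checking the hypotheses on $Z^{D}$, the role of Proposition \ref{siam}, and the time change $\tau_{t}$) only makes explicit what the paper leaves implicit.
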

\begin{proof}
We obtain the result by applying Proposition 9 of \cite{fjo} on the process  $(Z^{D}_{g_{t}}X_{t}:t\geq0)$.

\end{proof}


\section{Stochastic differential equations driven by a relative martingale}\label{sec:4}

In this section, we study stochastic differential equations driven by a relative martingale. More precisely, we investigate stochastic differential equations of the form:
\begin{equation}\label{eds}
	dX_{t}=\sigma(t,X_{t})dW_{t}+b(t,X_{t})dt\text{, }0\leq t\leq T\text{, }X_{0}=Z
\end{equation}
where $W=B+v$ is a continuous sub-martingale of the class $\mathcal{M}(H)$ such that $B$ is a standard Brownian motion. The study of such equations can have good applications in finance engineering. For instance, one of such equations has recently appeared in \cite{ruin} to propose an investment strategy for insurance companies. Specifically, that is next equation:
$$dS_{t}=\left(\mu+\frac{\sigma^{2}}{2}\right)S_{t}dt+\sigma S_{t}dX^{\delta}_{t},$$
where $X^{\delta}$ is the Itô-Mckean skew Brownian motion presented in Section \ref{sec:2} as a process of the class $\mathcal{M}(H)$. In particular, the present investigations will be do under next hypothesis:

\begin{hyp}\label{h2}
Let $T>0$ and $b:[0,T]\times\R\rightarrow\R$, $\sigma:[0,T]\times\R\rightarrow\R$ be measurable functions satisfying
\begin{equation}\label{H1}
	|b(t,x)|+|\sigma(t,x)|\leq C(1+|x|)\text{; }t\in[0,T]\text{, }x\in\R
\end{equation}
for some constant $C$ and such that
\begin{equation}\label{H2}
	|b(t,x)-b(t,y)|+|\sigma(t,x)-\sigma(t,y)|\leq K|x-y|\text{; }t\in[0,T]\text{, }x,y\in\R
\end{equation}
for some constant $K$. Let $Z$ be a random variable which is independent of the $\sigma-$ algebra $\mathcal{F}_{\infty}$ generated by $B_{t}$, $t\geq0$ and such that 
$$\E[|Z|^{2}]<\infty\text{ and }\E[V_{T}|Z|^{2}]<\infty.$$
\end{hyp}

Under the above hypothesis, the classical stochastic equation:
\begin{equation}\label{ceds}
	dY_{t}=\sigma(t,Y_{t})dB_{t}+b(t,Y_{t})dt\text{, }Y_{0}=Z
\end{equation}
admits a unique continuous solution (see Theorem 5.2.1 of \cite{bo}). Throughout the rest of this paper, we shall denote this solution $Y$.
The study of Equation \eqref{eds} strongly depends on the random set $H$. Indeed, the novelty in this equation comes from the integral $\int_{0}^{t}{\sigma(s,X_{s})dv_{s}}$ whose behaviour depends on $H$ since $dv$ is carried by $H$. Hence, the present section consists of two principal subsections. In the first one, we investigate \eqref{eds} according to the structure of $H$. In the second part, we approach the study in a more general way without taking into account the structure on $H$.
\subsection{Relationship with the classical equation}
We first remark that $dW=dB$ outside set $H$. Hence, under some conditions, the solution $X$ of \eqref{eds} behaves like the solution $Y$ of \eqref{ceds}. In this subsection, we investigate situations where the solution $Y$ of \eqref{ceds} satisfies \eqref{eds}. Thus, we start by show that $Y$ is also solution of \eqref{eds} when $t\mapsto\sigma(t,x)$ vanishes on $H$.

\begin{prop}
If in addition to Hypothesis \ref{h2}, the function $\sigma$ is such that $\forall s\in H$ and $\forall x\in \R$, $\sigma(s,x)=0$. Hence, the unique solution of the equation:
$$X_{t}=Z+\int_{0}^{t}{\sigma(s,X_{s})dB_{s}}+\int_{0}^{t}{b(s,X_{s})ds}$$
is also the unique solution of the next equation:
$$X_{t}=Z+\int_{0}^{t}{\sigma(s,X_{s})dW_{s}}+\int_{0}^{t}{b(s,X_{s})ds}.$$
\end{prop}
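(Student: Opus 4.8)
The plan is to show that the two stochastic integral equations coincide precisely because the difference between them is controlled by the integral against $dv$, which vanishes under the hypothesis that $\sigma(s,\cdot)=0$ on $H$. Since $W=B+v$ with $dv$ carried by $H$, for any adapted process the difference of the two driving integrals is
\begin{equation}\label{plandiff}
\int_{0}^{t}{\sigma(s,X_{s})dW_{s}}-\int_{0}^{t}{\sigma(s,X_{s})dB_{s}}=\int_{0}^{t}{\sigma(s,X_{s})dv_{s}}.
\end{equation}
First I would recall that $dv$ is carried by $H$, i.e.\ $\int{1_{H^{c}}(s)dv_{s}}=0$ by the third condition of Definition \ref{d1}, so the integral on the right of \eqref{plandiff} only sees times $s\in H$. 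By the hypothesis $\sigma(s,x)=0$ for every $s\in H$ and every $x\in\R$, the integrand $\sigma(s,X_{s})$ is identically zero on the support of $dv$, and hence $\int_{0}^{t}{\sigma(s,X_{s})dv_{s}}=0$ for all $t$. This immediately forces the two integrals in \eqref{plandiff} to be equal.

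The second step is to leverage this pointwise identity of driving integrals to conclude the equality of solutions. Let $Y$ denote the unique continuous solution to the classical equation \eqref{ceds} guaranteed by Hypothesis \ref{h2} (via Theorem 5.2.1 of \cite{bo}). Substituting $Y$ into the $W$-driven equation, the argument above shows that $\int_{0}^{t}{\sigma(s,Y_{s})dW_{s}}=\int_{0}^{t}{\sigma(s,Y_{s})dB_{s}}$, so $Y$ satisfies
$$Y_{t}=Z+\int_{0}^{t}{\sigma(s,Y_{s})dW_{s}}+\int_{0}^{t}{b(s,Y_{s})ds}$$
as well; that is, $Y$ solves the $W$-driven equation. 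Conversely, any solution $X$ of the $W$-driven equation satisfies $\int_{0}^{t}{\sigma(s,X_{s})dv_{s}}=0$ by the same reasoning, so $X$ also solves the $B$-driven equation and hence coincides with $Y$ by uniqueness in Hypothesis \ref{h2}. This establishes that the solution sets of the two equations are identical and that the common solution is unique.

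The main obstacle, and the point deserving the most care, is justifying that the integrand $\sigma(s,X_{s})$ genuinely annihilates the measure $dv$. The subtlety is that ``$dv$ is carried by $H$'' is a statement about the random set $H$ (the zero set of the continuous martingale $D$), and one must argue that $1_{H}(s)\sigma(s,X_{s})=0$ holds $dv$-almost everywhere. Since the hypothesis gives $\sigma(s,x)=0$ for \emph{all} $x\in\R$ whenever $s\in H$, the value $X_{s}$ is irrelevant and the annihilation is automatic; thus $\sigma(s,X_{s})1_{H}(s)=0$ identically, and the finite-variation nature of $v$ makes the resulting pathwise integral vanish without any measurability complications. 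I would also remark that the cancellation uses only the carrying property and the structural hypothesis on $\sigma$, so no additional regularity beyond Hypothesis \ref{h2} is needed, and the uniqueness transfers verbatim from the classical equation.
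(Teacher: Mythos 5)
Your proposal is correct and follows essentially the same route as the paper's own proof: decompose $\int_{0}^{t}{\sigma(s,X_{s})dW_{s}}$ into the $dB$ and $dv$ parts, observe that $\int_{0}^{t}{\sigma(s,X_{s})dv_{s}}=0$ because $dv$ is carried by $H$ while $\sigma(s,\cdot)$ vanishes identically for $s\in H$, and conclude that the two equations are equivalent so that existence and uniqueness transfer from the classical equation. Your added care in spelling out both directions of the equivalence and the $dv$-a.e.\ annihilation is a harmless elaboration of the same argument.
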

\begin{proof}
Firstly, we have: $\forall t\geq0$,
$$\int_{0}^{t}{\sigma(s,X_{s})dW_{s}}=\int_{0}^{t}{\sigma(s,X_{s})dB_{s}}+\int_{0}^{t}{\sigma(s,X_{s})dv_{s}}.$$
But, $\int_{0}^{t}{\sigma(s,X_{s})dv_{s}}=0$ because, $s\mapsto\sigma(s,X_{s})$ vanishes on $H$ and $dv$ is carried by $H$. Which proves that the two above equations are equivalent. This completes the proof.
\end{proof}

Now, recall that $H$ is the zero set of a continuous martingale $D$. Hence, $H$ cannot be ordered. However $\R_{+}\setminus H$ can be decomposed as a countable union $\cup_{n\in\N}{J_{n}}$ of intervals $J_{n}$. Each interval $J_{n}$ corresponds to some excursion of $D$. Specifically, if $J_{n}=]g_{n},d_{n}[$, $dW_{t}=dB_{t}$ for all $t\in]g_{n},d_{n}[$ and $g_{n},d_{n}\in H$. In the following, we explore situations where the solution $Y$ of \eqref{ceds} satisfies \eqref{eds}. Let $\tau_{1}=g_{0}$ be the first zero of $D$ and denote $N=\inf\{n\geq0:d_{n}\neq g_{n+1}\}$ and $\tau=d_{N}$.

In the following, we show that Equation \eqref{eds} admits a unique solution before the first entry time in $H$, $\tau_{1}$. And that this solution is the same which verifies \eqref{ceds}.

\begin{prop}\label{h}
Let $T$ be a real such that $T<\tau_{1}$ a.s. Under Assumptions \ref{h2}, there exists a unique continuous process $X$ such that $\forall t\in[0,T]$,
$$X_{t}=Z+\int_{0}^{t}{\sigma(s,X_{s})dW_{s}}+\int_{0}^{t}{b(s,X_{s})ds}.$$
It is the same solution of \eqref{ceds}.
\end{prop}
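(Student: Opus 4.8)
The plan is to show that on the interval $[0,T]$ the relative martingale $W$ agrees with the Brownian motion $B$ as an integrator, so that \eqref{eds} collapses to the classical equation \eqref{ceds}, whose well-posedness is already known under Hypothesis \ref{h2}.

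First I would establish the key structural fact that $H\cap[0,T]=\emptyset$. Since $\tau_{1}=g_{0}$ is by definition the first zero of $D$ and $T<\tau_{1}$ almost surely, every $s\in[0,T]$ satisfies $s\leq T<g_{0}$, so $D_{s}\neq0$; as $H$ is the zero set of $D$, this forces $s\notin H$. Consequently, for any continuous adapted process $X$ for which the integral is defined and for every $t\in[0,T]$,
$$\int_{0}^{t}{\sigma(s,X_{s})dv_{s}}=\int_{0}^{t}{1_{H}(s)\sigma(s,X_{s})dv_{s}}=0,$$
where the first equality uses that $dv$ is carried by $H$ and the second uses $1_{H}\equiv0$ on $[0,T]$.

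Next I would carry out the reduction. Decomposing $W=B+v$ and inserting the previous display, for every $t\in[0,T]$,
$$\int_{0}^{t}{\sigma(s,X_{s})dW_{s}}=\int_{0}^{t}{\sigma(s,X_{s})dB_{s}}+\int_{0}^{t}{\sigma(s,X_{s})dv_{s}}=\int_{0}^{t}{\sigma(s,X_{s})dB_{s}}.$$
Hence, restricted to $[0,T]$, equation \eqref{eds} is term-by-term identical to the classical equation \eqref{ceds}: a continuous process $X$ solves one on $[0,T]$ if and only if it solves the other.

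Finally I would invoke the classical theory. Under Hypothesis \ref{h2}, Theorem 5.2.1 of \cite{bo} provides a unique continuous solution $Y$ of \eqref{ceds}. By the equivalence just established, the restriction of $Y$ to $[0,T]$ solves \eqref{eds}, which gives existence, while any continuous solution of \eqref{eds} on $[0,T]$ is a solution of \eqref{ceds} and therefore coincides with $Y$, which gives both uniqueness and the identification with the solution of \eqref{ceds}. Since the argument is essentially a reduction, I expect the only point demanding care to be the measure-theoretic step, namely justifying that $dv$ charges no part of $[0,T]$; here this follows cleanly because $H\cap[0,T]$ is empty, and it is precisely at this point that the hypothesis $T<\tau_{1}$ is used in an essential way.
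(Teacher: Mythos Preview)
Your proof is correct and follows essentially the same approach as the paper: both observe that $[0,T]\subset H^{c}$ (since $T<\tau_{1}$), deduce that $dv$ contributes nothing on $[0,T]$ so that $dW=dB$ there, and then invoke Theorem 5.2.1 of \cite{bo} for the classical equation \eqref{ceds}. Your version is slightly more detailed in spelling out the equivalence in both directions for uniqueness, but the argument is the same.
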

\begin{proof}
First remark that $v$ is constant on $[0,T]$ because, $dv$ is carried by $H$ and $[0,T]\subset[0,\tau_{1}[\subset H^{c}$. That is, we have: $dW_{s}=dB_{s}$, $\forall s\leq T$. Hence, \eqref{eds} coincides with the following standard stochastic differential equation:
\begin{equation}\label{seds}
	X_{t}=Z+\int_{0}^{t}{\sigma(s,X_{s})dB_{s}}+\int_{0}^{t}{b(s,X_{s})ds}.
\end{equation}
Consequently, we obtain the existence from Theorem 5.2.1 of \cite{bo}. Which completes the proof.
\end{proof}

In the next proposition, we show that the above result is again true on $[0,\tau_{2}[$, where $\tau_{2}=\inf\{t>\tau_{1}: t\in H\}$.
\begin{prop}
For all $T>0$ such that $\gamma_{T}=\tau_{1}$, we have under Assumption \ref{h2}, that there exists a unique continuous process $X$ such that $\forall t\leq T$,
$$X_{t}=Z+\int_{0}^{t}{\sigma(s,X_{s})dW_{s}}+\int_{0}^{t}{b(s,X_{s})ds}.$$
\end{prop}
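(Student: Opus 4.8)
The statement to be proved is the natural extension of Proposition~\ref{h} from the interval $[0,\tau_{1}[$ to the interval $[0,\tau_{2}[$, where now the time horizon $T$ satisfies $\gamma_{T}=\tau_{1}$, meaning that the last zero of $D$ before $T$ is precisely the first zero $\tau_{1}$. The plan is to decompose the interval $[0,T]$ at the point $\tau_{1}$ and solve the equation separately on $[0,\tau_{1}]$ and on $[\tau_{1},T]$, then glue the two pieces together by matching the solution value at $\tau_{1}$.

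First I would handle the interval $[0,\tau_{1}]$. On $[0,\tau_{1}[$ we have $[0,\tau_{1}[\subset H^{c}$, so exactly as in the proof of Proposition~\ref{h} the finite variation part $v$ is constant there and $dW_{s}=dB_{s}$; hence on this piece Equation~\eqref{eds} reduces to the standard stochastic differential equation~\eqref{ceds}, which admits a unique continuous solution by Theorem~5.2.1 of \cite{bo}. By continuity this determines $X_{\tau_{1}}$. Second, I would treat the interval $]\tau_{1},T]$. The crucial observation is that since $\gamma_{T}=\tau_{1}$ and $\tau_{2}=\inf\{t>\tau_{1}:t\in H\}$, we have $]\tau_{1},T]\subset]\tau_{1},\tau_{2}[\subset H^{c}$, so again $dv$ carries no mass on this interval and $dW_{s}=dB_{s}$ there as well. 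Thus on $]\tau_{1},T]$ the equation is once more the classical SDE, now started from the $\mathcal{F}_{\tau_{1}}$-measurable initial condition $X_{\tau_{1}}$, and I would invoke the existence and uniqueness theorem on this second piece with the new random starting value.

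Finally I would splice the two solutions together, arguing that the process obtained by concatenation is the unique continuous process solving~\eqref{eds} on all of $[0,T]$. For existence, the concatenated process is continuous at $\tau_{1}$ by construction and satisfies the integral equation on each piece, and since $v$ contributes nothing on $[0,\tau_{1}[\cup]\tau_{1},T]$ while the single point $\{\tau_{1}\}$ has no mass for either the Brownian integral or the Lebesgue integral, it satisfies the full equation. For uniqueness, any solution must coincide with the unique classical solution on $[0,\tau_{1}[$ hence at $\tau_{1}$ by continuity, and then coincide with the unique classical solution started from $X_{\tau_{1}}$ on $]\tau_{1},T]$.

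\textbf{Main obstacle.} The delicate point I expect is the gluing step: one must verify that $\tau_{1}$ is an $(\mathcal{F}_{t})$-stopping time (it is the first zero of the continuous martingale $D$, hence the hitting time of a closed set, so this holds), that $X_{\tau_{1}}$ is $\mathcal{F}_{\tau_{1}}$-measurable and square-integrable so that Hypothesis~\ref{h2} is inherited by the restarted equation, and above all that no contribution to $\int_{0}^{t}\sigma(s,X_{s})dv_{s}$ comes from the single instant $\tau_{1}$. The last fact follows because $v$ is continuous and of finite variation with $dv$ carried by $H$, yet $v$ is constant on each of the two abutting excursion intervals, so $v$ is in fact constant across $\tau_{1}$ and $\int_{0}^{T}\sigma(s,X_{s})dv_{s}=0$; making this rigorous is the heart of the argument.
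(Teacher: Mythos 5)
Your argument rests on exactly the same key fact as the paper's proof: under $\gamma_{T}=\tau_{1}$, the set $H\cap[0,T]$ reduces to the single point $\{\tau_{1}\}$, and since $v$ is continuous the measure $dv$ has no atom there, so $dv$ vanishes identically on $[0,T]$ and $W$, $B$ coincide as integrators up to time $T$. But your execution differs from the paper's in a way that costs you. The paper never restarts the equation: it takes the unique global solution $X$ of the classical equation \eqref{ceds} on $[0,T]$ (Theorem 5.2.1 of \cite{bo}), notes via Proposition \ref{h} that $X$ solves \eqref{eds} on $[0,\tau_{1}[$, then for $t\in[\tau_{1},T]$ splits the integral at $\tau_{1}=\gamma_{t}$ and observes that $dB=dW$ on $[0,\tau_{1}[$ and on $]\gamma_{t},t]$, so the very same process $X$ verifies \eqref{eds}; uniqueness is immediate because, by the same identity of integrators, any solution of \eqref{eds} on $[0,T]$ is a solution of \eqref{ceds}. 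You instead re-solve the equation on $]\tau_{1},T]$ from the random initial value $X_{\tau_{1}}$, and this is where your proposal has a genuine soft spot: Theorem 5.2.1 of \cite{bo} cannot be invoked verbatim for the restarted equation. Hypothesis \ref{h2} requires the initial value to be independent of $\mathcal{F}_{\infty}=\sigma(B_{t},t\geq0)$, and $X_{\tau_{1}}$ is certainly not independent of it, so the hypothesis is not ``inherited'' as you claim; moreover the initial time $\tau_{1}$ is random, so after time-shifting, the coefficients $\sigma(\tau_{1}+u,x)$ and $b(\tau_{1}+u,x)$ become random too. None of this is fatal --- one works in the filtration $(\mathcal{F}_{\tau_{1}+u})_{u\geq0}$, uses the strong Markov property so that $B_{\tau_{1}+\cdot}-B_{\tau_{1}}$ is a Brownian motion independent of $\mathcal{F}_{\tau_{1}}$, and checks $\E[|X_{\tau_{1}}|^{2}]<\infty$ from standard moment estimates --- but it is machinery the statement does not need: once $dv\equiv0$ on $[0,T]$ is established (the fact you correctly isolate as the heart of the matter), the global classical solution already satisfies \eqref{eds}, with no gluing and no restart.
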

\begin{proof} 
We know from Theorem 5.2.1 of \cite{bo} that the classic Equation: 
$$X_{t}=Z+\int_{0}^{t}{\sigma(s,X_{s})dB_{s}}+\int_{0}^{t}{b(s,X_{s})ds}$$
admits a unique continuous solution $X$. According to Proposition \ref{h}, $X$ is also a solution of \eqref{eds} on $[0,\tau_{1}[$. Furthermore, $\forall t\in[\tau_{1},T]$,
$$X_{t}=Z+\int_{0}^{\tau_{1}}{\sigma(s,X_{s})dB_{s}}+\int_{\tau_{1}}^{t}{\sigma(s,X_{s})dB_{s}}+\int_{0}^{t}{b(s,X_{s})ds}.$$
But,$\forall t\in[\tau_{1},T]$, $\gamma_{t}=\gamma_{T}$ because $\gamma_{T}=\tau_{1}$. Hence, we get:
$$X_{t}=Z+\int_{0}^{\tau_{1}}{\sigma(s,X_{s})dB_{s}}+\int_{\gamma_{t}}^{t}{\sigma(s,X_{s})dB_{s}}+\int_{0}^{t}{b(s,X_{s})ds}.$$
However, $dB=dW$ on $[0,\tau_{1}[$ and on $[\gamma_{t},t]$. Which implies:
$$X_{t}=Z+\int_{0}^{\tau_{1}}{\sigma(s,X_{s})dW_{s}}+\int_{\gamma_{t}}^{t}{\sigma(s,X_{s})dW_{s}}+\int_{0}^{t}{b(s,X_{s})ds}.$$
That is,
$$X_{t}=Z+\int_{0}^{t}{\sigma(s,X_{s})dW_{s}}+\int_{0}^{t}{b(s,X_{s})ds}.$$
Consequently, $X$ is also solution of \eqref{eds}.
\end{proof}

Now, we show that Equation \eqref{eds} admits a unique solution on $[0,\tau]$ and that this solution is also the same which verifies \eqref{ceds}.
\begin{prop}
Let $T$ be a real such that $T\leq\tau$ a.s. If in addition, $\tau_{1}<\tau$. Hence, under Assumptions \ref{h2}, there exists a unique continuous process $X$ such that $\forall t\in[0,T]$,
$$X_{t}=Z+\int_{0}^{t}{\sigma(s,X_{s})dW_{s}}+\int_{0}^{t}{b(s,X_{s})ds}.$$
It is the same solution of \eqref{ceds}.
\end{prop}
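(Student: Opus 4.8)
The plan is to argue inductively along the excursion intervals of $D$, reducing Equation \eqref{eds} on each block to the classical Equation \eqref{ceds} exactly as in the two preceding propositions. Recall that $\tau=d_{N}$ with $N=\inf\{n\geq0:d_{n}\neq g_{n+1}\}$, so on the interval $[0,\tau]$ the excursion endpoints chain together: $d_{0}=g_{1}$, $d_{1}=g_{2}$, \dots, $d_{N-1}=g_{N}$, and these consecutive excursions of $D$ are separated only by the single zeros $d_{0},\dots,d_{N-1}\in H$, while $d_{N}=\tau$ is the first right endpoint that is strictly below the next left endpoint. The key structural observation is that the set $H\cap[0,\tau]$ therefore reduces to the finite collection of isolated points $\{\tau_{1}=g_{0},\,d_{0},d_{1},\dots,d_{N-1}\}$ (together possibly with $\tau$ itself). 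Since $v$ is continuous with $dv$ carried by $H$, and $H\cap[0,\tau]$ is a finite set of points, the measure $dv$ restricted to $[0,\tau]$ assigns zero mass to each of these isolated points by continuity of $v$; hence $v$ is constant on $[0,\tau]$ and $dW_{s}=dB_{s}$ for all $s\leq\tau$.

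First I would make this last claim precise: because $v$ is a continuous finite-variation process whose associated signed measure is supported on the finite point set $H\cap[0,\tau]$, and a continuous function of finite variation puts no atom at any single point, the total variation of $v$ on $[0,\tau]$ vanishes. Thus $W_{t}=B_{t}+v_{0}=B_{t}$ on $[0,\tau]$ (using $v_{0}=0$), which gives $dW_{s}=dB_{s}$ identically on $[0,T]\subset[0,\tau]$. Once this identification is established, Equation \eqref{eds} restricted to $[0,T]$ coincides verbatim with the standard stochastic differential equation \eqref{seds}, namely
$$X_{t}=Z+\int_{0}^{t}{\sigma(s,X_{s})dB_{s}}+\int_{0}^{t}{b(s,X_{s})ds}.$$

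Next I would invoke Theorem 5.2.1 of \cite{bo}: under Hypothesis \ref{h2} (the linear growth bound \eqref{H1}, the Lipschitz bound \eqref{H2}, and the integrability $\E[|Z|^{2}]<\infty$), this classical equation admits a unique continuous solution, which is precisely the process $Y$ fixed earlier as the solution of \eqref{ceds}. Since \eqref{eds} and \eqref{seds} are the same equation on $[0,T]$, existence and uniqueness transfer immediately, and the solution is $Y$. This establishes both assertions of the proposition at once.

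The main obstacle I anticipate is justifying rigorously that $dv$ is null on $[0,\tau]$, i.e.\ that the hypotheses $T\leq\tau$ and $\tau_{1}<\tau$ genuinely force $H\cap[0,\tau]$ to be a finite (hence $v$-negligible) set rather than a set with accumulation points carrying positive $dv$-mass. The hypothesis $\tau_{1}<\tau$ guarantees that $\tau$ strictly exceeds the first hitting time $\tau_{1}=g_{0}$, so at least one full excursion has elapsed and the concatenation $d_{0}=g_{1},\dots,d_{N-1}=g_{N}$ describes how $[0,\tau]$ decomposes; the definition of $N$ as the first index where $d_{n}\neq g_{n+1}$ ensures no accumulation of zeros occurs before $\tau$, since between $g_{0}$ and $d_{N}$ the zeros of $D$ are exactly the isolated contact points listed above. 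Making this combinatorial description airtight—especially ruling out an accumulation of the $(g_{n},d_{n})$ from the left at $\tau$—is the delicate point; once it is granted, the continuity of $v$ does the rest and the remainder of the argument is the routine reduction to \eqref{ceds} followed by Theorem 5.2.1 of \cite{bo}.
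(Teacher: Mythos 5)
Your proposal is correct and follows essentially the same route as the paper: identify $H\cap[0,T]$ as a finite set of points, conclude from the continuity of $v$ and the fact that $dv$ is carried by $H$ that $v$ is constant (hence $dW_{s}=dB_{s}$) on $[0,T]$, and then invoke Theorem 5.2.1 of \cite{bo} to reduce everything to the classical equation \eqref{ceds}. The only cosmetic difference is that you derive the constancy of $v$ from atomlessness of the measure of a continuous finite-variation process carried by finitely many points, whereas the paper splits the stochastic integral over the excursion intervals $[t_{k},t_{k+1}]$ and uses that $v$ is constant on each of them; both arguments rest on the same structural description of $H\cap[0,\tau]$.
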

\begin{proof}
First remark that $\forall T\leq\tau$, $[0,T]\cap H$ is a finite and countable set. That is, $\forall t\leq T$, there exist an integer $d$ and reals, $t_{1},\cdots,t_{d}$ such that $[0,t]\cap H=\{t_{1},\cdots,t_{d}\}$, where $\tau_{1}=t_{1}<t_{2}<\cdots<t_{d}=\gamma_{t}$. Thus, we have:
$$\int_{0}^{t}{\sigma(s,X_{s})dW_{s}}=\int_{0}^{t_{1}}{\sigma(s,X_{s})dW_{s}}+\sum_{k=1}^{d-1}{\int_{t_{k}}^{t_{k+1}}{\sigma(s,X_{s})dW_{s}}}+\int_{\gamma_{t}}^{t}{\sigma(s,X_{s})dW_{s}}.$$
But, $v$ is constant on $[0,t_{1}[$, $[\gamma_{t},t]$ and on $[t_{k},t_{k+1}[$, $\forall k\in\{1,\cdots,d-1\}$ because $D$ does not vanish on intervals $[0,t_{1}[$, $]\gamma_{t},t]$ and on $]t_{k},t_{k+1}[$. Hence,
$$\int_{0}^{t}{\sigma(s,X_{s})dW_{s}}=\int_{0}^{t_{1}}{\sigma(s,X_{s})dB_{s}}+\sum_{k=1}^{d-1}{\int_{t_{k}}^{t_{k+1}}{\sigma(s,X_{s})dB_{s}}}+\int_{\gamma_{t}}^{t}{\sigma(s,X_{s})dB_{s}}=\int_{0}^{t}{\sigma(s,X_{s})dB_{s}}.$$
Which means that the equation
$$X_{t}=Z+\int_{0}^{t}{\sigma(s,X_{s})dW_{s}}+\int_{0}^{t}{b(s,X_{s})ds}$$
is equivalent to
$$X_{t}=Z+\int_{0}^{t}{\sigma(s,X_{s})dB_{s}}+\int_{0}^{t}{b(s,X_{s})ds}.$$
This completes the proof.
\end{proof}

Now, we shall explore what happens after the honest time $\gamma=\sup\{t\geq0:t\in H\}$. In particular, we show that we have the previous result in the enlarged filtration $(\mathcal{G}_{\gamma+t})_{t\geq0}$.
\begin{prop}\label{p10}
Under Assumptions \ref{h2}, there exists a unique continuous process $Y$, adapted to the filtration \\$(\mathcal{G}_{\gamma+t})_{t\geq0}$ such that $\forall t\geq0$,
$$Y_{t}=Z+\int_{0}^{t}{\sigma(s,Y_{s})d\widetilde{W}_{s}}+\int_{0}^{t}{b(s,Y_{s})ds},$$
where $\widetilde{W}_{t}=W_{\gamma+t}-W_{\gamma}-\int_{\gamma}^{\gamma+t}{\frac{d\langle W,D\rangle_{s}}{D_{s}}}$.
\end{prop}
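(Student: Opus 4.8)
The plan is to reduce the stated equation to a classical It\^o stochastic differential equation driven by a genuine Brownian motion, after passing to the enlarged filtration $(\mathcal{G}_{\gamma+t})_{t\geq0}$ and the measure $\Qv$, and then to invoke the standard existence-and-uniqueness theorem. The two things that must be done carefully are (i) showing that $\widetilde{W}$ really is a Brownian motion in this new setting, and (ii) checking that the initial datum $Z$ keeps the integrability and independence it needs once we move from $(\P,(\mathcal{F}_{t}))$ to $(\Qv,(\mathcal{G}_{\gamma+t}))$.

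First I would identify $\widetilde{W}$ as a Brownian motion. Since $W=B+v$ is a continuous semi-martingale of the class $\mathcal{M}(H)$ whose local-martingale part is the standard Brownian motion $B$ and whose continuous finite-variation part $v$ has its measure carried by $H$, Theorem \ref{gir} applies with $X=W$: the process $\widetilde{W}$ is a continuous $(\Qv,(\mathcal{G}_{\gamma+t})_{t\geq0})$-local martingale with $\widetilde{W}_{0}=0$ and $\langle\widetilde{W},\widetilde{W}\rangle_{t}=\langle W,W\rangle_{\gamma+t}-\langle W,W\rangle_{\gamma}$. Because $v$ is continuous with finite variation it contributes nothing to the bracket, so $\langle W,W\rangle_{t}=\langle B,B\rangle_{t}=t$ and hence $\langle\widetilde{W},\widetilde{W}\rangle_{t}=(\gamma+t)-\gamma=t$. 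A continuous local martingale starting at $0$ with quadratic variation equal to $t$ is, by L\'evy's characterization theorem, a standard Brownian motion; thus $\widetilde{W}$ is a standard $\Qv$-Brownian motion adapted to $(\mathcal{G}_{\gamma+t})_{t\geq0}$.

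Next, with $\widetilde{W}$ recognized as a Brownian motion, the equation in the statement is the classical SDE $Y_{t}=Z+\int_{0}^{t}\sigma(s,Y_{s})\,d\widetilde{W}_{s}+\int_{0}^{t}b(s,Y_{s})\,ds$, and I would verify that the hypotheses of Theorem 5.2.1 of \cite{bo} survive the change of probability and filtration. The linear-growth and Lipschitz bounds \eqref{H1}--\eqref{H2} are conditions on the deterministic coefficients and are untouched. For the initial condition, the Radon--Nikodym density $|D_{\infty}|/\E(|D_{\infty}|)$ defining $\Qv$ is $\mathcal{F}_{\infty}$-measurable while $Z$ is independent of $\mathcal{F}_{\infty}$ under $\P$; since $\widetilde{W}$ is built from $W$ and $D$ and is therefore $\mathcal{F}_{\infty}$-measurable, a short computation gives, for bounded measurable $f,g$, that $\E_{\Qv}[f(Z)g(\widetilde{W})]=\E_{\Qv}[f(Z)]\,\E_{\Qv}[g(\widetilde{W})]$ and $\E_{\Qv}[f(Z)]=\E[f(Z)]$. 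Hence $Z$ keeps its $\P$-law and remains independent of $\widetilde{W}$ under $\Qv$, so in particular $\E_{\Qv}[|Z|^{2}]=\E[|Z|^{2}]<\infty$. Applying Theorem 5.2.1 of \cite{bo} to this SDE then yields a unique continuous process $Y$, adapted to $(\mathcal{G}_{\gamma+t})_{t\geq0}$, satisfying the required equation.

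The hard part is the simultaneous change of measure ($\P\to\Qv$) and progressive enlargement ($\mathcal{F}\to\mathcal{G}$): one must be sure that $\widetilde{W}$ is a true Brownian motion in the pair $(\Qv,(\mathcal{G}_{\gamma+t}))$ and that $Z$ retains square-integrability and independence from $\widetilde{W}$ under $\Qv$, which is precisely what the $\mathcal{F}_{\infty}$-measurability of the density together with the independence of $Z$ from $\mathcal{F}_{\infty}$ secure; Lemma \ref{j80} can be invoked where one needs to replace $\mathcal{G}_{\gamma+t}$ by $\mathcal{F}_{\gamma+t}$. Once $\widetilde{W}$ is identified as a Brownian motion and the hypotheses on $Z$ are transferred, the remainder is the routine Picard-iteration argument underlying Theorem 5.2.1 of \cite{bo}.
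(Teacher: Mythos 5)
Your proof follows essentially the same route as the paper: apply the Az\'ema--Yor enlargement theorem (Theorem \ref{gir}) to identify $\widetilde{W}$ as a $(\Qv,(\mathcal{G}_{\gamma+t})_{t\geq0})$-local martingale with $\langle\widetilde{W},\widetilde{W}\rangle_{t}=t$, conclude by L\'evy's characterization that it is a Brownian motion, and invoke Theorem 5.2.1 of \cite{bo}. Your extra verification that $Z$ retains its law, its square-integrability, and its independence from $\widetilde{W}$ after the change of measure $\P\to\Qv$ (via $\mathcal{F}_{\infty}$-measurability of the density) is a point the paper leaves implicit, so it strengthens rather than departs from the paper's argument.
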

\begin{proof}
We obtain from Theorem 2.4.1 of \cite{1} that $\left(W_{\gamma+t}-W_{\gamma}-\int_{\gamma}^{\gamma+t}{\frac{d\langle W,|D|\rangle_{s}}{|D_{s}|}}:t\geq0\right)$ is a local martingale with respect to $(\mathcal{G}_{\gamma+t})_{t\geq0}$. But, we have:
$$\int_{\gamma}^{\gamma+t}{\frac{d\langle W,|D|\rangle_{s}}{|D_{s}|}}=\int_{\gamma}^{\gamma+t}{\frac{d\langle W,D\rangle_{s}}{D_{s}}}.$$
Hence, $\widetilde{W}$ is a local martingale adapted to $(\mathcal{G}_{\gamma+t})_{t\geq0}$. Moreover,
$$\langle \widetilde{W},\widetilde{W}\rangle_{t}=\langle W,W\rangle_{\gamma+t}-\langle W,W\rangle_{\gamma}=t.$$
Then, $\widetilde{W}$ is a Brownian motion with respect to $(\mathcal{G}_{\gamma+t})_{t\geq0}$. Consequently, we obtain from Theorem 5.2.1 of \cite{bo} that there exists a unique continuous process $Y$ adapted to filtration $(\mathcal{G}_{\gamma+t})_{t\geq0}$ such that
$$Y_{t}=Z+\int_{0}^{t}{\sigma(s,Y_{s})d\widetilde{W}_{s}}+\int_{0}^{t}{b(s,Y_{s})ds}.$$
\end{proof}

\begin{coro}\label{l6}
Let $W=B+v$ be a $(\mathcal{F}_{t})_{t\geq0}$- adapted and continuous process  of the class $\mathcal{M}(H)$ such that $B$ is a standard Brownian motion and $d\langle W,D\rangle$ is carried by $H$. Under Hypothesis \ref{h2}, there exists a process $X$ such that $\forall t\geq0$,
\begin{equation}\label{eeq1}
	X_{t}=X_{\gamma_{t}}+\int_{\gamma_{t}}^{t}{\sigma(X_{s})dW_{s}}+\int_{\gamma_{t}}^{t}{b(X_{s})ds}.
\end{equation}
\end{coro}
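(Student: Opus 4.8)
The plan is to show that the classical solution $Y$ of \eqref{ceds}, driven by the Brownian motion $B$, already satisfies the localised equation \eqref{eeq1}; one may then simply take $X:=Y$. First I would invoke Hypothesis \ref{h2} together with Theorem 5.2.1 of \cite{bo} to produce the unique continuous $(\mathcal{F}_t)_{t\ge0}$-adapted process $Y$ with
$$Y_t=Z+\int_0^t\sigma(s,Y_s)\,dB_s+\int_0^t b(s,Y_s)\,ds.$$
Setting $X:=Y$, every integrand $\sigma(s,X_s)$, $b(s,X_s)$ is well defined, and I would read each stochastic integral $\int_{\gamma_t}^t(\cdots)$ as the difference of the associated continuous It\^o-integral process evaluated at the two times $\gamma_t$ and $t$. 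Since $\gamma_t=\sup\{s\le t:D_s=0\}$ is $\mathcal{F}_t$-measurable, all the terms in \eqref{eeq1} are meaningful for each fixed $t$.

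The decisive step is the elementary observation that the interval $(\gamma_t,t]$ never meets $H$. Indeed, by the very definition of $\gamma_t$ as the last zero of $D$ up to time $t$, the martingale $D$ does not vanish on $(\gamma_t,t]$, so $(\gamma_t,t]\subset H^c$, the interval being empty exactly when $t\in H$ (i.e. $\gamma_t=t$). Because $dv$ is carried by $H$ and $v$ is continuous, hence charges no single point, in particular not $\gamma_t$, I would conclude that $v$ is constant on $[\gamma_t,t]$, whence $\int_{\gamma_t}^t\sigma(s,X_s)\,dv_s=0$. Writing $W=B+v$, this gives
$$\int_{\gamma_t}^t\sigma(s,X_s)\,dW_s=\int_{\gamma_t}^t\sigma(s,X_s)\,dB_s.$$

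To finish, I would subtract the classical equation evaluated at $\gamma_t$ from the same equation at $t$, obtaining $X_t-X_{\gamma_t}=\int_{\gamma_t}^t\sigma(s,X_s)\,dB_s+\int_{\gamma_t}^t b(s,X_s)\,ds$, and then replace $dB$ by $dW$ on $(\gamma_t,t]$ via the previous step; rearranging yields exactly \eqref{eeq1}. I expect the genuine difficulty to be conceptual rather than computational: one must recognise that localising the integration to $(\gamma_t,t]$ is precisely what annihilates the finite-variation term $\int\sigma\,dv$. This is why \eqref{eeq1} is solvable by $Y$ for every $t\ge0$, whereas the global equation \eqref{eds} generally is not, since over the whole set $H\cap[0,t]$, which may be an uncountable ($dv$-charged) perfect set, the integral $\int_0^t\sigma(s,Y_s)\,dv_s$ need not vanish. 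This direct verification also subsumes the piecewise arguments of Proposition \ref{h} and its successors, in which $H\cap[0,t]$ was kept finite. The hypotheses doing the work are Hypothesis \ref{h2} (existence of $Y$) and the carrying of $dv$ by $H$; the assumption that $d\langle W,D\rangle$ is carried by $H$ is not required for existence as such, but keeps the statement consistent with the enlargement-of-filtration picture of Proposition \ref{p10} and Corollary \ref{c8}.
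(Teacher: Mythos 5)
Your proof is correct, but it takes a genuinely different route from the paper's. The paper never touches the classical solution driven by $B$: it first solves the equation \emph{after} the last zero $\gamma$ in the enlarged filtration $(\mathcal{G}_{\gamma+t})_{t\geq0}$ (Proposition \ref{p10}), using the hypothesis that $d\langle W,D\rangle$ is carried by $H$ to identify $\widetilde{W}_t$ with the increment $W_{\gamma+t}-W_{\gamma}$; it then invokes Jeulin's theory to write this $(\mathcal{G}_{\gamma+t})$-adapted solution as $X_{\gamma+t}$ for some $(\mathcal{F}_t)_{t\geq0}$-adapted process $X$, and finally transfers the post-$\gamma$ identity to all $t\geq0$ via the uniqueness part of the Az\'ema--Yor operator $\rho$ (Proposition \ref{rho}), both $X_t-X_{\gamma_t}$ and $\int_{\gamma_t}^{t}{\sigma(X_s)dW_s}+\int_{\gamma_t}^{t}{b(X_s)ds}$ being optional processes vanishing on $H$ with the same values after $\gamma$. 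You replace all of this machinery by the elementary pathwise observation that $(\gamma_t,t]\subset H^{c}$, so that $dv$ (carried by $H$, understood in the total-variation sense the paper uses throughout, e.g.\ in the proof of Lemma \ref{l1}) puts no mass on $(\gamma_t,t]$, hence $dW=dB$ there; the classical solution $Y$ of \eqref{ceds} then satisfies \eqref{eeq1} simply by subtracting the equation at $\gamma_t$ from the equation at $t$. What your route buys: it is shorter, stays entirely in the original filtration, exhibits an explicit solution, and, as you correctly observe, never uses the hypothesis that $d\langle W,D\rangle$ is carried by $H$, so existence holds without it. What the paper's route buys: its solution is built so that $X_{\gamma+t}$ solves an SDE driven by the $(\mathcal{G}_{\gamma+t})$-Brownian motion $\widetilde{W}$ with value $Z$ at time $\gamma$ (yours instead has $X_0=Z$), which is the structure exploited in the surrounding results (Proposition \ref{p10}, Corollary \ref{c9}), and its $\rho$-uniqueness step is the same template used in Propositions \ref{p1} and \ref{p2}. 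The two arguments produce different processes $X$, but since the corollary only asserts existence, both are complete proofs.
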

\begin{proof}
We first remark that $\widetilde{W}_{t}=W_{t+\gamma}-W_{\gamma}$ since $\int_{\gamma}^{\gamma+t}{\frac{d\langle W,D\rangle_{s}}{D_{s}}}=0$. Indeed, $d\langle W,D\rangle$ is carried by $H$. Moreover, we know from Proposition \ref{p10} that there exists a unique continuous process $Y$ adapted to $(\mathcal{G}_{\gamma+t})_{t\geq0}$ such that $\forall t\geq0$,
$$Y_{t}=Y_{0}+\int_{0}^{t}{\sigma(Y_{s})d\widetilde{W}_{s}}+\int_{0}^{t}{b(Y_{s})ds}.$$
On another hand, we get from Chapter V \cite{jeu} that there exists a process $X$ adapted to $(\mathcal{F}_{t})_{t\geq0}$ such that $\forall t\geq0$, $Y_{t}=X_{\gamma+t}$. Hence, we obtain:
$$X_{\gamma+t}=X_{\gamma}+\int_{0}^{t}{\sigma(X_{\gamma+s})d\widetilde{W}_{s}}+\int_{0}^{t}{b(X_{\gamma+s})ds}.$$
Then, it follows:
$$X_{\gamma+t}-X_{\gamma}=\int_{\gamma}^{\gamma+t}{\sigma(X_{s})dW_{s}}+\int_{\gamma}^{\gamma+t}{b(X_{s})ds}.$$
Which implies that
$$\rho(X_{\gamma+\cdot}-X_{\gamma})_{t}=\rho\left(\int_{\gamma}^{\gamma+\cdot}{\sigma(X_{s})dW_{s}}+\int_{\gamma}^{\gamma+\cdot}{b(X_{s})ds}\right)_{t}.$$
Now, let us consider processes $Z$ and $Z^{'}$ defined by $\forall t\geq0$,
$Z_{t}=X_{t}-X_{\gamma_{t}}$ and $Z^{'}_{t}=\int_{\gamma_{t}}^{t}{\sigma(X_{s})dW_{s}}+\int_{\gamma_{t}}^{t}{b(X_{s})ds}$. We can see that $Z$ and $Z^{'}$ vanish on $H$ and that $\forall t\geq0$, 
$$Z_{\gamma+t}=X_{\gamma+t}-X_{\gamma}\text{ and }Z_{\gamma+t}^{'}=\int_{\gamma}^{\gamma+t}{\sigma(X_{s})dW_{s}}+\int_{\gamma}^{\gamma+t}{b(X_{s})ds}.$$ Consequently, we obtain by uniqueness that
$$X_{t}-X_{\gamma_{t}}=\int_{\gamma_{t}}^{t}{\sigma(X_{s})dW_{s}}+\int_{\gamma_{t}}^{t}{b(X_{s})ds}.$$
This proves the existence of solutions for \eqref{eeq1}.
\end{proof}

Note that any solution of the equation:
\begin{equation}\label{eq1}
	X_{t}=X_{0}+\int_{0}^{t}{\sigma(X_{s})dW_{s}}+\int_{0}^{t}{b(X_{s})ds},
\end{equation}
verifies also \eqref{eeq1}. But, we cannot always affirm the reciprocal.  In next corollary, we give a sufficient condition under which a solution of \eqref{eeq1} is also a solution of \eqref{eq1}.

\begin{coro}\label{c9}
Let $X$ be a solution of Equation \eqref{eeq1} such that $\forall t\in H$, $X_{t}=X_{0}+\int_{0}^{t}{\sigma(X_{s})dW_{s}}+\int_{0}^{t}{b(X_{s})ds}$. Hence, $X$ is a solution of \eqref{eq1} for every $t\geq0$.
\end{coro}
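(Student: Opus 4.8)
The plan is to exploit the single structural fact that, since $H$ is the zero set of the continuous process $D$, it is closed, and therefore $\gamma_{t}=\sup\{s\leq t:s\in H\}$ belongs to $H$ for every $t\geq0$. This is exactly what activates the hypothesis: the assumption provides the representation of $X$ in the form \eqref{eq1} precisely at the instants of $H$, and $\gamma_{t}$ is such an instant.

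First I would fix $t\geq0$ and record that $\gamma_{t}\in H$. Applying the standing hypothesis at the time $\gamma_{t}$ then yields
$$X_{\gamma_{t}}=X_{0}+\int_{0}^{\gamma_{t}}{\sigma(X_{s})dW_{s}}+\int_{0}^{\gamma_{t}}{b(X_{s})ds}.$$
Next I would substitute this expression for $X_{\gamma_{t}}$ into Equation \eqref{eeq1}, which $X$ satisfies by assumption, obtaining
$$X_{t}=X_{0}+\int_{0}^{\gamma_{t}}{\sigma(X_{s})dW_{s}}+\int_{0}^{\gamma_{t}}{b(X_{s})ds}+\int_{\gamma_{t}}^{t}{\sigma(X_{s})dW_{s}}+\int_{\gamma_{t}}^{t}{b(X_{s})ds}.$$
Finally, using the additivity of the stochastic and Lebesgue integrals over the adjacent intervals $[0,\gamma_{t}]$ and $[\gamma_{t},t]$, the two stochastic integrals recombine into $\int_{0}^{t}{\sigma(X_{s})dW_{s}}$ and the two drift integrals into $\int_{0}^{t}{b(X_{s})ds}$, which is precisely \eqref{eq1}. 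As $t\geq0$ was arbitrary, the conclusion follows for every $t\geq0$.

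There is essentially no real obstacle beyond the observation that $\gamma_{t}\in H$; the only points requiring a word of care are the degenerate case in which $[0,t]\cap H=\emptyset$, where $\gamma_{t}$ is read through the convention $\sup\emptyset=0$ so that $X_{\gamma_{t}}=X_{0}$ and the identity reduces trivially, and the legitimacy of splitting the stochastic integral at the random point $\gamma_{t}$. The latter causes no difficulty because $\gamma_{t}\leq t$ is bounded and $\sigma(X_{\cdot})$ is a fixed progressive integrand, so decomposing over the two subintervals is just ordinary additivity of the It\^o integral.
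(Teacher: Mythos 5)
Your proof is correct and takes essentially the same route as the paper: evaluate the hypothesis at the time $\gamma_{t}\in H$, substitute the resulting expression for $X_{\gamma_{t}}$ into \eqref{eeq1}, and recombine the integrals over $[0,\gamma_{t}]$ and $[\gamma_{t},t]$ by additivity to obtain \eqref{eq1}. Your extra care about why $\gamma_{t}\in H$ (closedness of $H$) and about the degenerate case $[0,t]\cap H=\emptyset$ covers points the paper leaves implicit, but the argument is the same.
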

\begin{proof}
We have: $\forall t\geq0$,
$$X_{t}=X_{\gamma_{t}}+\int_{\gamma_{t}}^{t}{\sigma(X_{s})dW_{s}}+\int_{\gamma_{t}}^{t}{b(X_{s})ds}$$
since $X$ is solution of \eqref{eeq1}. In addition, $\forall t\in H$, 
$$X_{t}=X_{0}+\int_{0}^{t}{\sigma(X_{s})dW_{s}}+\int_{0}^{t}{b(X_{s})ds}.$$
Which means that
$$X_{\gamma_{t}}=X_{0}+\int_{0}^{\gamma_{t}}{\sigma(X_{s})dW_{s}}+\int_{0}^{\gamma_{t}}{b(X_{s})ds}.$$
This implies that $\forall t\geq0$,
$$X_{t}=X_{0}+\int_{0}^{\gamma_{t}}{\sigma(X_{s})dW_{s}}+\int_{\gamma_{t}}^{t}{\sigma(X_{s})dW_{s}}+\int_{0}^{\gamma_{t}}{b(X_{s})ds}+\int_{\gamma_{t}}^{t}{b(X_{s})ds}.$$
Consequently, we obtain that $\forall t\geq0$,
$$X_{t}=X_{0}+\int_{0}^{t}{\sigma(X_{s})dW_{s}}+\int_{0}^{t}{b(X_{s})ds}.$$
This completes the proof.
\end{proof}

\subsection{Study in the general case}

Now, we shall investigate next equation:
\begin{equation}\label{eds1}
	X_{t}=\zeta1_{H}(t)+\left[Z+\int_{0}^{t}{\sigma(s,X_{s})dW_{s}}+\int_{0}^{t}{b(s,X_{s})ds}\right]1_{H^{c}}(t),
\end{equation}
where $W=B+v$ is a sub-martingale of the class $\mathcal{M}(H)$ such that $B$ is a standard Brownian motion. Thus, we shall consider next assumptions:
 
We prove existence of solutions for \eqref{eds1} in what follows.
\begin{prop}
Under the above assumptions, there exist solutions for \eqref{eds1}.
\end{prop}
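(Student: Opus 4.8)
The plan is to eliminate the override on $H$ by absorbing the $dv$-part of $W$ into a known forcing term, so that \eqref{eds1} reduces to a classical It\^o equation driven by the Brownian motion $B$. Since $dv$ is carried by $H$ and any candidate solution must equal $\zeta$ on $H$, the measure $\sigma(s,X_s)\,dv_s$ does not in fact involve the unknown: writing $W=B+v$, for every process $X$ with $X_s=\zeta$ on $H$ we have
\[
\int_0^t \sigma(s,X_s)\,dW_s=\int_0^t \sigma(s,X_s)\,dB_s+G_t,\qquad G_t:=\int_0^t \sigma(s,\zeta)\,dv_s,
\]
where $G$ is a continuous adapted finite-variation process, well defined because $|\sigma(s,\zeta)|\le C(1+|\zeta|)$ by \eqref{H1} and $v$ has integrable variation. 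Thus $G$ is completely determined by the data and carries the whole contribution of $H$ to the stochastic integral.

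First I would set up an auxiliary equation. Put $\widehat\sigma(s,u):=\sigma\big(s,\zeta 1_H(s)+u\,1_{H^c}(s)\big)$ and $\widehat b(s,u):=b\big(s,\zeta 1_H(s)+u\,1_{H^c}(s)\big)$. As $H$ is the zero set of the adapted process $D$, the indicator $1_H$ is progressively measurable, hence so are $\widehat\sigma,\widehat b$; moreover $u\mapsto \zeta 1_H(s)+u\,1_{H^c}(s)$ is $1$-Lipschitz, so $\widehat\sigma,\widehat b$ inherit from \eqref{H1}--\eqref{H2} the very same linear-growth and Lipschitz constants $C,K$. I would then solve the forced equation
\[
U_t=Z+\int_0^t \widehat\sigma(s,U_s)\,dB_s+\int_0^t \widehat b(s,U_s)\,ds+G_t .
\]
Writing $\overline U_t:=U_t-G_t$ converts this into a standard It\^o equation with coefficients $\overline\sigma(s,u)=\widehat\sigma(s,u+G_s)$ and $\overline b(s,u)=\widehat b(s,u+G_s)$, still Lipschitz in $u$ and of linear growth relative to the (square-integrable) forcing $G$. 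A Picard iteration, i.e.\ the contraction argument underlying Theorem 5.2.1 of \cite{bo} adapted to progressively measurable coefficients and a finite-variation forcing, then yields under Hypothesis \ref{h2} a unique continuous adapted process $U$.

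Finally I would reassemble the solution by putting $X_t:=\zeta 1_H(t)+U_t\,1_{H^c}(t)$ and checking \eqref{eds1}. By construction $\widehat\sigma(s,U_s)=\sigma(s,X_s)$ and $\widehat b(s,U_s)=b(s,X_s)$ for all $s$, while $G_t=\int_0^t\sigma(s,X_s)\,dv_s$ because $dv$ charges only $H$, on which $X=\zeta$; the defining relation for $U$ therefore becomes
\[
U_t=Z+\int_0^t \sigma(s,X_s)\,dB_s+\int_0^t \sigma(s,X_s)\,dv_s+\int_0^t b(s,X_s)\,ds=Z+\int_0^t \sigma(s,X_s)\,dW_s+\int_0^t b(s,X_s)\,ds.
\]
For $t\in H$ one has $X_t=\zeta$, matching the first term of \eqref{eds1}; for $t\in H^c$ one has $X_t=U_t$, so the displayed identity is precisely the bracketed expression, and \eqref{eds1} holds. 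I expect the only real obstacle to be the auxiliary existence step: one must check that $G$ lies in the space in which the Picard scheme contracts, which is exactly where the integrability imposed on $\zeta$ (controlling a quantity of the type $\E[V_T^{2}|\zeta|^{2}]$, with $V$ the variation of $v$) is used; once $G$ is square-integrable the remaining bounds are the routine Gronwall and Burkholder--Davis--Gundy estimates of the classical theory.
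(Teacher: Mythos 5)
Your proof is correct, and it takes a genuinely different route from the paper's. Both arguments hinge on the same key observation --- because $dv$ is carried by $H$, where any candidate solution is pinned to $\zeta$, the $dv$-contribution to the stochastic integral is the known process $G_t=\int_0^t\sigma(s,\zeta)\,dv_s$ --- but they deploy it differently. The paper runs a Picard iteration directly on \eqref{eds1}: it defines iterates with the $\zeta 1_{H}+[\cdots]1_{H^{c}}$ structure, splits $dW=dB+dv$ in the difference estimates, kills the $dv$-differences because consecutive iterates agree (both equal $\zeta$) on $H$, derives the factorial bound, obtains a Cauchy sequence in $L^{2}(\lambda\times\P)$, and passes to the limit term by term. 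You instead push the observation into the data: freezing the coefficients on $H$ (your $\widehat\sigma,\widehat b$) and extracting the forcing $G$ reduces everything to a single auxiliary classical SDE, after which the solution is reassembled as $\zeta 1_H+U1_{H^c}$. Your route is more modular and, incidentally, yields uniqueness for free: any solution $X$ of \eqref{eds1} induces a solution of your auxiliary equation (since $\widehat\sigma(s,u)=\sigma(s,\zeta)$ for every $u$ when $s\in H$), so uniqueness of $U$ transfers to $X$, whereas the paper needs a separate Gronwall argument (Lemma \ref{l7} and its corollary). The price is that Theorem 5.2.1 of \cite{bo} is stated for deterministic coefficients, while $\widehat\sigma$, $\widehat b$ and $G$ are random (through $1_{H}$, $\zeta$ and $v$); the extension to progressively measurable, uniformly Lipschitz coefficients with square-integrable forcing is standard, but writing it out amounts to the very Picard/Gronwall computation the paper performs by hand --- you flag this honestly. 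Your identification of the required integrability ($G\in L^{2}$, i.e.\ control of $\E[(1+|\zeta|)^{2}V_{T}^{2}]$) matches what the paper's own estimates actually use (it bounds via $\E[|(1+|X_{0}|)v_{T}|^{2}]$), which is stronger than the literal statement of Hypothesis \ref{h2}; and one measurability point both arguments leave tacit is that $\zeta$ must be $\mathcal{F}_{0}$-measurable (or at least make $\widehat\sigma$ progressive) for the integrands to be legitimate.
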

\begin{proof}
Define $Y^{(0)}=X_{0}$ and $Y^{(p)}=Y^{(p)}(w)$ inductively as follows
$$Y^{(p+1)}_{t}=\zeta1_{H}(t)+\left[Z+\int_{0}^{t}{\sigma(s,Y^{(p)}_{s})dW_{s}}+\int_{0}^{t}{b(s,Y^{(p)}_{s})ds}\right]1_{H^{c}}(t).$$
We have $\forall t\leq T$ and $p\geq1$,
$$\E\left[|Y^{(p+1)}_{t}-Y^{(p)}_{t}|^{2}\right]\leq2\E\left[\int_{0}^{t}(\sigma(s,Y^{(p)}_{s})-\sigma(s,Y^{(p-1)}_{s}))dW_{s}\right]^{2}+2\E\left[\int_{0}^{t}(b(s,Y^{(p)}_{s})-b(s,Y^{(p-1)}_{s}))ds\right]^{2}.$$
Let $\alpha_{s}=\sigma(s,Y^{(p)}_{s})-\sigma(s,Y^{(p-1)}_{s})$ and $\beta_{s}=b(s,Y^{(p)}_{s})-b(s,Y^{(p-1)}_{s})$. Thus, we obtain:
$$\E\left[\int_{0}^{t}{\beta_{s}ds}\right]^{2}\leq K^{2}t\int_{0}^{t}{\E[|Y^{(p)}_{s}-Y^{(p-1)}_{s}|^{2}]ds}.$$
And,
$$\E\left[\int_{0}^{t}{\alpha_{s}dW_{s}}\right]^{2}\leq2\E\left[\int_{0}^{t}{\alpha_{s}dB_{s}}\right]^{2}+2\E\left[\int_{0}^{t}{\alpha_{s}dv_{s}}\right]^{2}$$
$$\hspace{2.5cm}\leq2\int_{0}^{t}{\E(\alpha_{s})^{2}ds}+2\E\left[\int_{0}^{t}{|\alpha_{s}|dv_{s}}\right]^{2}.$$
Hence, we get by assumptions the following:
$$\E\left[\int_{0}^{t}{\alpha_{s}dW_{s}}\right]^{2}\leq2K^{2}\int_{0}^{t}{\E[|Y^{(p)}_{s}-Y^{(p-1)}_{s}|^{2}]ds}+2K^{2}\E\left[\int_{0}^{t}{|Y^{(p)}_{s}-Y^{(p-1)}_{s}|dv_{s}}\right]^{2}.$$
But $\forall s\in H$, $\int_{0}^{t}{|Y^{(p)}_{s}-Y^{(p-1)}_{s}|dv_{s}}=0$ since $Y^{(p)}_{s}-Y^{(p-1)}_{s}=0$. 
Which implies:
$$\E\left[\int_{0}^{t}{\alpha_{s}dW_{s}}\right]^{2}\leq2K^{2}\int_{0}^{t}{\E[|Y^{(p)}_{s}-Y^{(p-1)}_{s}|^{2}]ds}.$$
Therefore, we get:
$$\E\left[|Y^{(p+1)}_{t}-Y^{(p)}_{t}|^{2}\right]\leq2K^{2}(2+t)\int_{0}^{t}{\E[|Y^{(p)}_{s}-Y^{(p-1)}_{s}|^{2}]ds}.$$
In addition, we have:
$$\hspace{-4cm}\E\left[|Y^{(1)}_{t}-Y^{(0)}_{t}|^{2}\right]\leq\E\left[\left|\zeta+\int_{0}^{t}{\sigma(s,X_{0})dW_{s}}+\int_{0}^{t}{b(s,X_{0})ds}\right|^{2}\right]$$
$$\hspace{2cm}\leq3\E[\zeta^{2}]+3\E\left[\left|\int_{0}^{t}{\sigma(s,X_{0})dW_{s}}\right|^{2}\right]+3\E\left[\left|\int_{0}^{t}{b(s,X_{0})ds}\right|^{2}\right]$$
$$\hspace{2cm}\leq3\E[\zeta^{2}]+6\E\left[\left|\int_{0}^{t}{\sigma(s,X_{0})dB_{s}}\right|^{2}\right]+6\E\left[\left|\int_{0}^{t}{\sigma(s,X_{0})dv_{s}}\right|^{2}\right]+3\E\left[\left|\int_{0}^{t}{b(s,X_{0})ds}\right|^{2}\right]$$
$$\hspace{2cm}\leq3\E[\zeta^{2}]+6\E\left[\int_{0}^{t}{|\sigma(s,X_{0})|^{2}ds}\right]+6\E\left[\left|\int_{0}^{t}{\sigma(s,X_{0})dv_{s}}\right|^{2}\right]+3t\E\left[\int_{0}^{t}{|b(s,X_{0})|^{2}ds}\right].$$
Then,
$$\E\left[|Y^{(1)}_{t}-Y^{(0)}_{t}|^{2}\right]\leq3\E[\zeta^{2}]+6C^{2}t(1+\E[|X_{0}|^{2}])+6C^{2}\E\left[\left|(1+|X_{0}|)v_{T}\right|^{2}\right]+3C^{2}t^{2}(1+\E[|X_{0}|^{2}]).$$
Hence,
$$\E\left[|Y^{(1)}_{t}-Y^{(0)}_{t}|^{2}\right]\leq A_{0}+A_{1}t,$$
where $A_{0}=3\E[\zeta^{2}]+6C^{2}\E\left[\left|(1+|X_{0}|)v_{T}\right|^{2}\right]$ and $A_{1}=6C^{2}(1+\E[|X_{0}|^{2}])+3C^{2}T(1+\E[|X_{0}|^{2}])$. So by induction on $p$ we obtain :
$$\E\left[|Y^{(p+1)}_{t}-Y^{(p)}_{t}|^{2}\right]\leq\frac{B_{0}^{p+1}\times t^{p}}{p!}+\frac{B_{1}^{p+1}\times t^{p+1}}{(p+1)!}\text{; }p\geq0\text{, }t\in[0,T],$$
where $B_{0}$ and $B_{1}$ are some suitable constants depending only on $C,K,T,\Gamma$ and $\E\left[|X_{0}|^{2}\right]$. Now, let $\lambda$ be Lebesgue measure on $[0,T]$ and $m>n\geq0$. Hence, we get:
$$||Y^{(m)}_{t}-Y^{(n)}_{t}||_{L^{2}(\lambda\times \P)}=\left|\left|\sum_{p=n}^{m-1}{Y^{(p+1)}_{t}-Y^{(p)}_{t}}\right|\right|_{L^{2}(\lambda\times \P)}$$
$$\hspace{3.5cm}\leq\sum_{p=n}^{m-1}{\left|\left|Y^{(p+1)}_{t}-Y^{(p)}_{t}\right|\right|_{L^{2}(\lambda\times \P)}}$$
$$\hspace{3.5cm}\leq\sum_{p=n}^{m-1}{\sqrt{\E\left[\int_{0}^{T}{|Y^{(p+1)}_{t}-Y^{(p)}_{t}|^{2}dt}\right]}}$$
$$\hspace{3.5cm}\leq\sum_{p=n}^{m-1}{\sqrt{\int_{0}^{T}{\left(\frac{B_{0}^{p+1}\times t^{p}}{p!}+\frac{B_{1}^{p+1}\times t^{p+1}}{(p+1)!}\right)dt}}}.$$
Hence, we obtain:
$$||Y^{(m)}_{t}-Y^{(n)}_{t}||_{L^{2}(\lambda\times \P)}\leq \sum_{p=n}^{m-1}{\sqrt{\frac{B_{0}^{p+1}\times T^{p+1}}{(p+1)!}+\frac{B_{1}^{p+1}\times T^{p+2}}{(p+2)!}}}\longrightarrow0$$
as $n,m\longrightarrow\infty$. Therefore, $\{Y^{(n)}_{t}:n\geq0\}$ is Cauchy sequence in $L^{2}(\lambda\times \P)$. Hence, $\{Y^{(n)}_{t}:n\geq0\}$ is convergent in $L^{2}(\lambda\times \P)$. Let
$$X_{t}=\lim_{n\to\infty}{Y^{(n)}_{t}}\text{            in }L^{2}(\lambda\times \P).$$
Now, we shall show that $X_{t}$ is solution of \eqref{eds1}. We have $\forall n\geq0$, and all $t\in[0,T]$,
$$Y^{(n+1)}_{t}=\zeta1_{H}(t)+\left[Z+\int_{0}^{t}{\sigma(s,Y^{(n)}_{s})dB_{s}}+\int_{0}^{t}{\sigma(s,Y^{(n)}_{s})dv_{s}}+\int_{0}^{t}{b(s,Y^{(n)}_{s})ds}\right]1_{H^{c}}(t).$$
But as $n\to\infty$, we obtain from the Hölder inequality that
$$\int_{0}^{t}{b(s,Y^{(n)}_{s})ds}\longrightarrow\int_{0}^{t}{b(s,X_{s})ds}\text{ }\text{ in }L^{2}(\lambda\times \P).$$
And through Itô's isometry, we get:
$$\int_{0}^{t}{\sigma(s,Y^{(n)}_{s})dB_{s}}\longrightarrow\int_{0}^{t}{\sigma(s,X_{s})dB_{s}}\text{ }\text{ in }L^{2}(\lambda\times \P).$$
Furthermore, we have:
$$\left|\left|\int_{0}^{t}{\sigma(s,X_{s})dv_{s}}-\int_{0}^{t}{\sigma(s,Y^{(n)}_{s})dv_{s}} \right|\right|^{2}_{L^{2}(\lambda\times \P)}=\E\left[\left|\int_{0}^{t}{[\sigma(s,X_{s})-\sigma(s,Y^{(n)}_{s})]dv_{s}} \right|^{2}\right]$$
$$\hspace{3cm}\leq K^{2}\E\left[\left|\int_{0}^{t}{|X_{s}-Y^{(n)}_{s}|dv_{s}} \right|^{2}\right].$$
But, $\int_{0}^{t}{|X_{s}-Y^{(n)}_{s}|dv_{s}}=0$ since $\forall s\in H$, $X_{s}=Y^{(n)}_{s}=\zeta$ and $dv$ is carried by $H$. Then,
$$\int_{0}^{t}{\sigma(s,Y^{(n)}_{s})dv_{s}}\longrightarrow\int_{0}^{t}{\sigma(s,X_{s})dv_{s}}\text{ }\text{ in }L^{2}(\lambda\times \P).$$
Consequently, $\forall t\in[0,T]$ we have
$$X_{t}=\zeta1_{H}(t)+\left[Z+\int_{0}^{t}{\sigma(s,X_{s})dB_{s}}+\int_{0}^{t}{\sigma(s,X_{s})dv_{s}}+\int_{0}^{t}{b(s,X_{s})ds}\right]1_{H^{c}}(t).$$
\end{proof}

\begin{lem}\label{l7}
Let $X$ and $Y$ be solutions of \eqref{eds} such that $\forall t\in H$, $X_{t}=Y_{t}$. If $X_{0}=Y_{0}$ hence, $X$ and $Y$ are indistinguishable.
\end{lem}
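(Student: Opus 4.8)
The plan is to establish pathwise uniqueness by a Gronwall argument, the decisive observation being that the finite-variation part $v$ of the driver $W$ contributes nothing to the difference $X-Y$.

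First I would set $\Delta_t = X_t - Y_t$ and subtract the two copies of \eqref{eds}, obtaining
$$\Delta_t = \int_0^t [\sigma(s,X_s)-\sigma(s,Y_s)]\,dW_s + \int_0^t [b(s,X_s)-b(s,Y_s)]\,ds.$$
Since $W = B + v$, the stochastic integral splits into a $dB$-part and a $dv$-part. Here is the key step: because $dv$ is carried by $H$ and, by hypothesis, $X_s = Y_s$ for every $s\in H$, the integrand $\sigma(s,X_s)-\sigma(s,Y_s)$ vanishes on $H$, whence $\int_0^t [\sigma(s,X_s)-\sigma(s,Y_s)]\,dv_s = 0$ exactly as in the computations of Section~\ref{sec:4}. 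Therefore
$$\Delta_t = \int_0^t [\sigma(s,X_s)-\sigma(s,Y_s)]\,dB_s + \int_0^t [b(s,X_s)-b(s,Y_s)]\,ds,$$
which is precisely the difference of two solutions of the \emph{classical} Brownian-driven equation \eqref{ceds}.

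From this point the argument is routine. Localizing with the stopping times $T_k = \inf\{t\geq0: |X_t|+|Y_t|\geq k\}$ to guarantee that the stochastic integral is a true martingale, I would apply $(a+b)^2\leq 2a^2+2b^2$, It�'s isometry on the $dB$-integral, the Cauchy-Schwarz inequality on the $ds$-integral, and the Lipschitz bound \eqref{H2}. For $t\leq T$ this yields
$$\E\big[|\Delta_{t\wedge T_k}|^2\big] \leq 2K^2(1+T)\int_0^t \E\big[|\Delta_{s\wedge T_k}|^2\big]\,ds.$$
Writing $h(t) = \E[|\Delta_{t\wedge T_k}|^2]$, this reads $h(t)\leq C\int_0^t h(s)\,ds$ with $C=2K^2(1+T)$, so Gronwall's lemma forces $h\equiv 0$; letting $k\to\infty$ gives $\E[|\Delta_t|^2]=0$, hence $X_t=Y_t$ almost surely for each $t\leq T$. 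Finally, since both $X$ and $Y$ have continuous paths (their integrals against the continuous process $W$ and against $ds$ are continuous), almost-sure equality on a countable dense set of times propagates to all $t\in[0,T]$, so $X$ and $Y$ are indistinguishable.

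The main obstacle is conceptual rather than computational: once one recognizes that the driving finite-variation term $v$ drops out of the difference equation -- precisely because the two solutions are pinned together on the support $H$ of $dv$ -- the problem collapses onto the standard $L^2$-uniqueness theorem for Lipschitz SDEs. The only genuine technical care needed is the localization, ensuring the stochastic integrals are martingales so that It�'s isometry is valid before Gronwall's lemma is invoked.
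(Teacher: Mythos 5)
Your proof is correct and follows essentially the same route as the paper: both hinge on the key observation that the $dv$-part of the difference vanishes because $dv$ is carried by $H$, where the two solutions agree by hypothesis, and both then conclude via It\^o isometry, Cauchy--Schwarz, the Lipschitz bound \eqref{H2}, and Gronwall's lemma. Your added localization (to justify the isometry) and the explicit passage from a.s.\ equality at each fixed time to indistinguishability via path continuity are careful touches that the paper glosses over, but they do not alter the substance of the argument.
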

\begin{proof}
We have $\forall t\geq0$,
$$\E\left[|Y_{t}-X_{t}|^{2}\right]\leq3\E\left[\left|Y_{0}-X_{0}\right|^{2}\right]+3\E\left[\left|\int_{0}^{t}{(\sigma(s,Y_{s})-\sigma(s,X_{s}))dW_{s}}\right|^{2}\right]+3\E\left[\left|\int_{0}^{t}{(b(s,Y_{s})-b(s,X_{s}))ds}\right|^{2}\right].$$
But, 
$$\E\left[\left|\int_{0}^{t}{(\sigma(s,Y_{s})-\sigma(s,X_{s}))dW_{s}}\right|^{2}\right]\leq2\E\left[\left|\int_{0}^{t}{(\sigma(s,Y_{s})-\sigma(s,X_{s}))dB_{s}}\right|^{2}\right]+2\E\left[\left|\int_{0}^{t}{(\sigma(s,Y_{s})-\sigma(s,X_{s}))dv_{s}}\right|^{2}\right]$$
We obtain from Itô isometry and Cauchy-Swarz's inequality the following:
$$\E\left[\left|\int_{0}^{t}{(\sigma(s,Y_{s})-\sigma(s,X_{s}))dW_{s}}\right|^{2}\right]\leq2\int_{0}^{t}{\E\left[|\sigma(s,Y_{s})-\sigma(s,X_{s})|^{2}\right]ds}+2\E\left[v_{t}\int_{0}^{t}{|\sigma(s,Y_{s})-\sigma(s,X_{s})|^{2}dv_{s}}\right].$$
Hence, according to Lipschitz property, we get:
$$\E\left[\left|\int_{0}^{t}{(\sigma(s,Y_{s})-\sigma(s,X_{s}))dW_{s}}\right|^{2}\right]\leq2K^{2}\int_{0}^{t}{\E|Y_{s}-X_{s}|^{2}ds}+2K^{2}\E\left[v_{t}\int_{0}^{t}{|Y_{s}-X_{s}|^{2}dv_{s}}\right].$$
Which implies that
$$\E\left[\left|\int_{0}^{t}{(\sigma(s,Y_{s})-\sigma(s,X_{s}))dW_{s}}\right|^{2}\right]\leq2K^{2}\int_{0}^{t}{\E|Y_{s}-X_{s}|^{2}ds}.$$
Indeed $\int_{0}^{t}{|Y_{s}-X_{s}|^{2}dv_{s}}=0$ since $dv$ is carried by $H$ and $Y-X=0$ on $H$. On another hand, through Cauchy-Shwarz's inequality and Lipschitz property, we get:
$$\E\left[\left|\int_{0}^{t}{(b(s,Y_{s})-b(s,X_{s}))ds}\right|^{2}\right]\leq tK^{2}\int_{0}^{t}{\E|Y_{s}-X_{s}|^{2}ds}.$$
Then, we obtain the following:
$$\E\left[|Y_{t}-X_{t}|^{2}\right]\leq3\E\left[\left|Y_{0}-X_{0}\right|^{2}\right]+3K^{2}(2+t)\int_{0}^{t}{\E|Y_{s}-X_{s}|^{2}ds}.$$
Thus, the function $\varphi$ defined by $\forall t\in[0,T]$, $\varphi(t)=\E\left[|Y_{t}-X_{t}|^{2}\right]$ satisfies,
$$\varphi(t)\leq F+A\int_{0}^{t}{\varphi(s)ds},$$
where $F=3\E\left[\left|Y_{0}-X_{0}\right|^{2}\right]$ and $A=3K^{2}(2+t)$.
By, the Gronwall Inequality, we get that 
$$\varphi(t)\leq F\exp(At).$$
Now, assume that $X_{0}=Y_{0}$. That is, $F=0$. And then, $\varphi(t)=0$ for all $t\in[0,T]$. Consequently, $X=Y$ a.s.
\end{proof}

\begin{coro}
Under assumptions $H_{1}$, the stochastic differential equation \eqref{eds1} has a unique solution.
\end{coro}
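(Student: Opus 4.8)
The plan is to reduce the statement to two ingredients already at hand: the existence result furnished by the preceding proposition and the comparison estimate of Lemma \ref{l7}. Since existence of a solution to \eqref{eds1} has just been established, only uniqueness remains to be shown.

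First I would let $X$ and $Y$ be two solutions of \eqref{eds1} and record the two structural facts that make Lemma \ref{l7} applicable. By the very form of \eqref{eds1}, for every $t\in H$ one has $X_{t}=\zeta=Y_{t}$, so $X$ and $Y$ coincide on $H$; and evaluating \eqref{eds1} at $t=0$ forces $X_{0}=Y_{0}$. These are precisely the two hypotheses of Lemma \ref{l7}. On $H^{c}$, moreover, \eqref{eds1} reduces to the integral equation \eqref{eds}, so the difference $X-Y$ obeys the same dynamics as in the setting of Lemma \ref{l7}, and the lemma applies without modification.

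Next I would run the Gronwall argument of Lemma \ref{l7} on $\varphi(t)=\E\left[|X_{t}-Y_{t}|^{2}\right]$. Using It\^o's isometry and the Cauchy--Schwarz inequality together with the Lipschitz bound \eqref{H2}, the stochastic and finite-variation contributions are controlled by $\int_{0}^{t}\varphi(s)\,ds$; the decisive point is the cancellation $\int_{0}^{t}|X_{s}-Y_{s}|^{2}\,dv_{s}=0$, which holds because $dv$ is carried by $H$ and $X=Y$ there. Since $X_{0}=Y_{0}$ makes the constant term $F$ vanish, one arrives at $\varphi(t)\le A\int_{0}^{t}\varphi(s)\,ds$, and Gronwall's inequality yields $\varphi\equiv0$ on $[0,T]$.

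The main obstacle I anticipate is the bookkeeping around the indicator $1_{H^{c}}$ in \eqref{eds1}: one must verify that on $H$ the difference vanishes identically (which secures the $dv$-cancellation) while on $H^{c}$ the dynamics are exactly those of \eqref{eds}, so that the estimate of Lemma \ref{l7} transfers verbatim. A final, more routine point is upgrading the pointwise conclusion $\varphi(t)=0$ to indistinguishability, which I would handle by invoking the continuity of the solutions; combined with the earlier existence statement, this establishes that \eqref{eds1} admits a unique solution.
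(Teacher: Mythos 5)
Your proposal is correct and follows essentially the same route as the paper: observe that any two solutions of \eqref{eds1} equal $\zeta$ on $H$ and share the same initial value, then invoke Lemma \ref{l7} (whose Gronwall argument you merely recapitulate) to conclude indistinguishability, with existence supplied by the preceding proposition. Your extra care about the indicator $1_{H^{c}}$ — checking that the difference $X-Y$ vanishes on $H$ and follows the \eqref{eds} dynamics off $H$ — is a point the paper glosses over, but it is the same argument.
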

\begin{proof}
Let $Y$ and $X$ be two solutions of \eqref{eds1}. Hence, $\forall t\geq0$, $Y_{t}=X_{t}=\zeta$. Hence, $X$ and $Y$ are solutions of \eqref{eds} such that $X=Y$ on $H$. Consequently, we obtain the result by applying Lemma \ref{l7}
\end{proof}


{\color{myaqua}}
\end{document}